% SIAM Article Template
\documentclass[hidelinks,onefignum,onetabnum]{siamart220329}

% Information that is shared between the article and the supplement
% (title and author information, macros, packages, etc.) goes into
% ex_shared.tex. If there is no supplement, this file can be included
% directly.

% SIAM Shared Information Template
% This is information that is shared between the main document and any
% supplement. If no supplement is required, then this information can
% be included directly in the main document.

% Packages and macros go here
\usepackage{lipsum}
\usepackage{epstopdf}
\ifpdf
  \DeclareGraphicsExtensions{.eps,.pdf,.png,.jpg}
\else
  \DeclareGraphicsExtensions{.eps}
\fi
\usepackage{amsfonts}
\usepackage{mathrsfs}
\usepackage{graphicx}
\usepackage{color}
\graphicspath{{figure/}}
\usepackage{indentfirst,latexsym,bm}
\usepackage{algorithm}
\usepackage{algpseudocode}
\usepackage{amsmath}
\usepackage{extarrows}
\usepackage{hyperref}
\definecolor{orangep}{RGB}{255,128,0}
\definecolor{bluep}{RGB}{0,0,0}
\hypersetup{
    colorlinks=true,
    linkcolor=orangep,
    filecolor=magenta,
    urlcolor=cyan,
    pdftitle={Overleaf Example},
    pdfpagemode=FullScreen,
}
\DeclareMathOperator*{\argmax}{arg\,max}
\DeclareMathOperator*{\argmin}{Arg\,min}
\newcommand{\R}{\mathbb{R}}
\newcommand{\dom}{\mathrm{dom}\,}
\newcommand{\lin}{\mathrm{lin}}
\newcommand{\dist}{\mathrm{dist}}

\newcommand{\aff}{\mathrm{aff}}
\newcommand{\conv}{\mathrm{conv}}
\newcommand{\ri}{\mathrm{ri}}

\newcommand{\bB}{\mathbb{B}}

\newcommand{\cX}{\mathcal{X}}

\newcommand{\cG}{\mathcal{G}}

\newcommand{\cH}{\mathcal{H}}

\newcommand{\vp}{\varphi}

\def\epi{\mathop{\rm epi}}
\def\gph{\mathop{\rm gph}}
\newcommand{\revise}[1]{{\color{bluep}{#1}}}
%\usepackage{refcheck}
%%% Infrastructure begin: This is to make
%%%refcheck work with cleveref. Comment and
%%%uncomment together with refcheck.
\makeatletter
\newcommand{\refcheckize}[1]{%
	\expandafter\let\csname
	@@\string#1\endcsname#1%
	\expandafter\DeclareRobustCommand\csname
	relax\string#1\endcsname[1]{%
		\csname
		@@\string#1\endcsname{##1}\wrtusdrf{##1}}%
	\expandafter\let\expandafter#1\csname
	relax\string#1\endcsname
}
\makeatother
\usepackage{enumerate}
\usepackage{cite} % sort the ordering and creates things like 1 - 3 in lieu of 1, 2, 3
%%%

\usepackage{subfigure}
\usepackage{amssymb}
\usepackage{microtype} % to avoid hyphens at the end of the line

% Add a serial/Oxford comma by default.

% Used for creating new theorem and remark environments
% SIAM format

\newsiamremark{hypothesis}{Hypothesis}
\crefname{hypothesis}{Hypothesis}{Hypotheses}
\newsiamthm{claim}{Claim}

\newsiamthm{thm}{Theorem}
\newsiamthm{prop}{Proposition}
\newsiamthm{coro}{Corollary}
\newsiamthm{lem}{Lemma}
\newsiamthm{assumption}{Assumption}
\newsiamthm{defn}{Definition}

\newsiamremark{rem}{Remark}
\newsiamremark{exmp}{Example}

% Sets running headers as well as PDF title and authors
\headers{KL Exponent via Hadamard parametrization}{W. Ouyang, Y. Liu, T. K. Pong, and H. Wang}

% Title. If the supplement option is on, then "Supplementary Material"
% is automatically inserted before the title.
\title{Kurdyka-\L ojasiewicz Exponent via Hadamard parametrization\thanks{Submitted to the editors \today.
\funding{The work of the second author was supported in part by the Natural Science Foundation of Sichuan Province (2022NSFSC1830), and the Southwest Minzu University Research Startup Funds (RQD2022035). The work of the third author was supported by the Hong Kong Research Grants Council PolyU153001/22p. The work of the fourth author was supported by the Natural Science Foundation of Shanghai (21ZR1442800).
}}}

% Authors: full names plus addresses.
\author{Wenqing Ouyang\thanks{School of Data Science (SDS), Shenzhen Research Institute of Big Data (SRIBD), The
Chinese University of Hong Kong, Shenzhen, People's Republic of China (wenqingouyang1@link.cuhk.edu.cn).}
\and Yuncheng Liu\thanks{School of Mathematics, Southwest Minzu University, Chengdu, Sichuan, People's Republic of China (lyc2015sx@aliyun.com).}
\and Ting kei Pong\thanks{Department  of  Applied  Mathematics, The  Hong  Kong  Polytechnic  University, Hong Kong, People's Republic of China (tk.pong@polyu.edu.hk).}
\and Hao Wang\thanks{School of Information Science and Technology, ShanghaiTech University, Shanghai, People's Republic of China (wanghao1@shanghaitech.edu.cn).}
}

\usepackage{amsopn}
\DeclareMathOperator{\diag}{diag}

\allowdisplaybreaks[2]

%%% Local Variables:
%%% mode:latex
%%% TeX-master: "ex_article"
%%% End:

% Optional PDF information
\ifpdf
\hypersetup{
  pdftitle={Kurdyka-\L ojasiewicz Exponent via Hadamard parametrization},
  pdfauthor={W. Ouyang, Y. Liu, T. K. Pong, H. Wang}
}
\fi

% The next statement enables references to information in the
% supplement. See the xr-hyperref package for details.

% FundRef data to be entered by SIAM
%<funding-group specific-use="FundRef">
%<award-group>
%<funding-source>
%<named-content content-type="funder-name">
%</named-content>
%<named-content content-type="funder-identifier">
%</named-content>
%</funding-source>
%<award-id> </award-id>
%</award-group>
%</funding-group>

\begin{document}

\maketitle

% REQUIRED
\begin{abstract}
  We consider a class of $\ell_1$-regularized optimization problems and the associated smooth ``over-parameterized" optimization problems built upon the Hadamard parametrization, or equivalently, the Hadamard difference parametrization (HDP). We characterize the set of second-order stationary points of the HDP-based model and show that they correspond to some stationary points of the corresponding $\ell_1$-regularized model. More importantly, we show that the Kurdyka-\L ojasiewicz (KL) exponent of the HDP-based model at a second-order stationary point can be inferred from that of the corresponding $\ell_1$-regularized model under suitable assumptions. Our assumptions are general enough to cover a wide variety of loss functions commonly used in $\ell_1$-regularized models, such as the least squares loss function and the logistic loss function. Since the KL exponents of many $\ell_1$-regularized models are explicitly known in the literature, our results allow us to leverage these known exponents to deduce the KL exponents at second-order stationary points of the corresponding HDP-based models, which were previously unknown. Finally, we demonstrate how these explicit KL exponents at second-order stationary points can be applied to deducing the explicit local convergence rate of a standard gradient descent method for \revise{minimizing} the HDP-based model.
\end{abstract}

% REQUIRED
\begin{keywords}
  Kurdyka-\L ojasiewicz exponent, over-parametrization, second-order stationarity, strict saddle property
\end{keywords}

% REQUIRED
\begin{MSCcodes}
90C25, 90C26, 68Q25
\end{MSCcodes}

\section{Introduction}
The idea of introducing redundant or extra number of variables to reformulate an optimization problem has received much attention recently \cite{hoff2017lasso,tibshirani2021equivalences,ziyin2023spred,dai2021representation}. This technique of over-parametrization, though counter-intuitive at first glance, can lead to potential advantages including smoothness of the objective of the over-parameterized model \cite{hoff2017lasso,kolb2023smoothing,poon2023smooth}, better generalization \cite{allen2019learning,pandey2023exploring,subramanian2022generalization} and implicit regularization \cite{tibshirani2021equivalences,zhao2022high,li2021implicit,dai2021representation}.
These benefits have been witnessed and analyzed in various machine learning models including deep neural networks, kernel methods and linear models \cite{belkin2019reconciling, belkin2020two, ergen2021convex,chou2023more}, and there have been tremendous recent interest and progress in developing theory of over-parameterized machine learning that establishes foundational mathematical principles underlying these phenomena; see, e.g., \cite{liu2021we,gunasekar2018characterizing,liu2022loss,sagawa2020investigation}.

In this paper, we focus on a specific type of over-parametrization for a class of $\ell_1$-regularized optimization problems. Specifically, we consider:% the following optimization problem:
\begin{equation}
  \label{l1prob}
  \min_{x\in\R^n} f(x):=h(x) + \mu\|x\|_1,
\end{equation}
where $h\in C^2(\R^n)$ and $\mu > 0$. This class of problems arises naturally in applications such as compressed sensing \cite{candes2005decoding,candes2006stable} and variable selections \cite{tibshirani1996regression}, where $h$ is usually introduced for data fidelity and can be convex or nonconvex depending on the noise model of the data. The associated over-parameterized model we consider is
\begin{equation}
  \label{relaxprob}
  \min_{u,v\in\R^n} G(u,v):=h(u\circ v) + \frac{\mu}{2}(\|u\|^2 + \|v\|^2),
\end{equation}
where $u\circ v$ is the Hadamard (entrywise) product between $u$ and $v$, \revise{and $\|\cdot\|$ denotes the Euclidean norm.} Notice that we have $G(u,v)\ge f(u\circ v)$ for all $(u,v)\in \R^n\times \R^n$ and $\inf f = \inf G$ thanks to the AM-GM inequality. Problem \eqref{relaxprob} was referred to as the Hadamard parametrization of \eqref{l1prob}
in the recent works \cite{hoff2017lasso,poon2023smooth}. \revise{Hadamard parametrization can also be viewed as a formulation of a two-layer diagonal linear network; see the related works \cite{pesme2023saddle,berthier2023incremental,tibshirani2021equivalences}.}

%{\color{red} More names?}

\revise{Interestingly, the function $G$ in \eqref{relaxprob} is smooth}, which has been exploited to design efficient algorithms for solving \eqref{relaxprob}; see, e.g., \cite{hoff2017lasso,poon2023smooth,kolb2023smoothing}. On the other hand, while \eqref{l1prob} is nonsmooth, it is known to have some desirable structural properties for the design of efficient algorithms. Two notable features are:
\begin{enumerate}[{\rm ({$\mathfrak{F}$}I)}]
  \item {\bf (No spurious local minimizers)} When $h$ is convex, so is $f$. Consequently, any optimization methods that return stationary points of $f$ will minimize $f$ when $h$ is convex.
  \item {\bf (Explicitly known KL exponents)} For a large variety of $h$, the function $f$ is known to satisfy the Kurdyka-\L ojasiewicz (KL) property with exponent $\frac12$; see, e.g., \cite{zhou2017unified,li2018calculus}. The KL property is crucial for the convergence analysis of first-order methods \cite{attouch2010proximal,AttBolSva13,BolSabTeb14}. Roughly speaking, a KL exponent of $\frac12$ at stationary points indicates that first-order methods such as the proximal gradient algorithm applied to \eqref{l1prob} is locally linearly convergent; see, e.g., \cite{attouch2009convergence,BolSabTeb14,li2018calculus}.
\end{enumerate}
It is not clear whether features {\rm ({$\mathfrak{F}$}I)} and {\rm ({$\mathfrak{F}$}II)} are inherited by $G$ in \eqref{relaxprob}.
Indeed, for {\rm ({$\mathfrak{F}$}I)}, one can easily observe that $G$ may not be convex even when $h$ is convex. Fortunately, the loss of convexity in $G$ is innocuous when $h$ is a convex quadratic function because, in this case, it can be shown that $G$ satisfies the so-called strict saddle property (see \cite[Appendix~C]{poon2021smooth}), and gradient method with random initialization can provably avoid strict saddle points \cite{lee2019first,poon2023smooth} and converge to second-order stationary points (which are global minimizers of $f$ when $h$ is convex quadratic; see, again, \cite[Appendix~C]{poon2021smooth}) under mild additional assumptions such as coercivity. However, it is not known \emph{whether the strict saddle property holds for $G$ for other convex $h$}. As for {\rm ({$\mathfrak{F}$}II)}, to the best of our knowledge, compared with the many instances of $f$ in \eqref{l1prob} with explicitly known KL exponents, it is not obvious how an explicit KL exponent of $G$ can be obtained except when $h$ is a polynomial. Moreover, it is also unclear \emph{whether one can estimate explicitly the KL exponent of $G$ based on that of $f$} to leverage the many scenarios of $f$ with known KL exponents. Since KL exponents are closely related to convergence rate of first-order methods, intuitively, this also suggests that even when the local convergence rate of standard first-order methods applied to \eqref{l1prob} is known, we do not readily know the local convergence rate of first-order methods applied to \eqref{relaxprob}. In this paper, we study variational properties of $G$ in \eqref{relaxprob} to address the above questions concerning strict saddle property and KL exponents.

To facilitate our discussions, we first describe an equivalent formulation of \eqref{relaxprob} that can be analyzed more readily. Specifically, we apply the following \emph{invertible} linear \revise{transformation} to \eqref{relaxprob}:
\begin{equation}\label{abuv}
a=\frac{u + v}{2}, \ b=\frac{u - v}{2}.
\end{equation}
It is easy to verify that  $a \circ a - b \circ b = u\circ v$. This kind of parametrization is known as Hadamard difference parametrization (HDP), and was studied in \cite{vaskevicius2019implicit,woodworth2020kernel,vivien2022label,kolb2023smoothing}. Using HDP, we can reformulate \eqref{relaxprob} equivalently as
\begin{equation}\label{relaxprob1}
\begin{aligned}
      \min_{a,b\in\R^n} F(a,b)&:=h(a^2-b^2) + \mu\sum_{i=1}^{n}(a_{i}^{2} + b_{i}^{2}) \\
     &=h(a^2-b^2) + \mu\|a^2-b^2\|_1 + 2\mu\|\min\{a^2,b^2\}\|_1,
\end{aligned}
\end{equation}
where the notation $a^2$, $b^2$ and $\min\{a^2,b^2\}$ all denote componentwise operation.
Since \revise{the} invertible linear \revise{transformation} \eqref{abuv} \revise{does} not change the variational properties that we will study in this paper, from now on, we will focus our discussions on $F$ in \eqref{relaxprob1} instead of $G$ in \eqref{relaxprob}.

The specific variational properties of $F$ in \eqref{relaxprob1} that we study in this paper include second-order stationarity and its KL exponent at second-order stationary points. \revise{Our contributions can be summarized as follows:
\begin{itemize}
  \item We show that for every second-order stationary point $(a^*,b^*)$ of $F$, the point $s^* := (a^*)^2 - (b^*)^2$ is a stationary point of $f$ in \eqref{l1prob}. Based on this, we also provide a proof of the strict saddle property of $F$ when $h$ is convex, which extends the result in \cite{poon2021smooth} that studied convex quadratic $h$.
  \item Under a certain strict complementarity condition, we show that if $f$ in \eqref{l1prob} satisfies the KL property with an exponent $\alpha\in (0,1)$ at $s^*$, then $F$ in \eqref{relaxprob1} satisfies the KL property with an exponent $\max\{\alpha,\frac12\}$ at $(a^*,b^*)$.
  \item When the strict complementarity condition fails, by assuming convexity of $h$ and an additional H\"{o}lderian error bound condition with exponent $\gamma\in (0,1]$, we show that if $f$ in \eqref{l1prob} satisfies the KL property with an exponent $\alpha\in (0,1)$ at $s^*$, then $F$ in \eqref{relaxprob1} satisfies the KL property with an exponent $(1+\beta)/2$ at $(a^*,b^*)$, where $\beta = 1 - \gamma(1-\alpha)\in (0,1)$. Examples are provided to demonstrate the tightness of the exponent $\frac{1+\beta}{2}$ when $(\gamma,\alpha)\in \{1\}\times [\frac{1}{2},1)$ or $(\gamma,\alpha)\in
  (0,\frac12]\times (\frac12,1)$.
\end{itemize}}
\noindent For commonly encountered convex $h:\R^n\to \R$ such as the least squares loss function (i.e., $h(x) = \frac12\|Ax - y\|^2$ for some $A\in \R^{m\times n}$ and $y\in \R^m$) and the logistic loss function (i.e., $h(x) = \frac1m\sum_{i=1}^m \log(1 + e^{\langle y_i,x\rangle})$ for some $y_i\in \R^n$, $i = 1,\ldots,m$), the aforementioned H\"{o}lderian error bound condition holds with exponent $\gamma = 1$ and the KL exponent of the corresponding $f$ in \eqref{l1prob} is $\frac12$ (see, e.g., \cite[Corollary~5.1]{li2018calculus}). In these cases, we can deduce that the KL exponent of the corresponding $F$ in \eqref{relaxprob1} at $(a^*,b^*)$ is either $1/2$ or $3/4$, depending on whether the strict complementarity condition holds.

The rest of the paper is organized as follows. In \cref{sec2}, we present notation and discuss some simple properties of $F$ in \eqref{relaxprob1}. Our studies of the second-order stationary points of $F$ and its strict saddle property are presented in \cref{sec31}. Results concerning the KL exponents of $F$ at its second-order stationary points are presented in \cref{subs:uncontrainKLregular}. Finally, we demonstrate in \cref{sec4} how our results in \cref{sec3} can be used to analyze the convergence rate of a standard gradient descent method for solving \eqref{relaxprob1}.

\section{Notation and preliminaries}\label{sec2}
\subsection{\revise{Notation}}
In this paper, we use $\R^n$ to denote the $n$-dimensional Euclidean space, equipped with
the standard inner product $\langle\cdot,\cdot\rangle$ and the induced norm $\|\cdot\|$. We denote the nonnegative orthant by $\R^n_+$. The closed ball centered at $x \in \R^n$ with radius $r$ is denoted by $\mathbb B(x, r)$, and the closed ball centered at the origin with radius $r$ is denoted by $\mathbb B_r$. We say that a set $S\subseteq Q\subseteq \R^n$ has full measure on $Q$ if $Q\setminus S$ has zero Lebesgue measure. If $Q=\R^n$, then we simply say that $S$ has full measure.

For an $x \in \R^n$, we use $\|x\|_1$ to denote its $\ell_1$ norm, and $x^2$ to denote the vector whose $i$th entry is $x_i^2$. Moreover, $\diag(x)$ is the diagonal matrix with $x$ as the diagonal vector. For $x$ and $y\in \R^n$, we use $x\circ y$ and $\min\{x,y\}$ to denote their entrywise product and minimum, respectively. Let $[n]:=\{1,\dots,n\}$. For an $I\subseteq [n]$ and an $x\in \R^n$, $x_I$ is the subvector of $x$ indexed by $I$, and we define $\Pi_I(x)\in \R^n$ as
\begin{equation}
      \label{projindex}
          [\Pi_{I}(x)]_i :=
   \begin{cases}
    x_i  & {\rm if} \ i \in I, \\
    0  & {\rm otherwise}.
   \end{cases}
  \end{equation}
For a matrix $A\in \R^{n\times n}$ and an $I\subseteq [n]$, we use $A_{II}$ to denote the submatrix obtained by extracting the rows and columns of $A$ indexed by $I$. Finally, the $n\times n$ identity matrix is denoted by $I_n$.

For a (nonempty) closed set $D \subseteq \R^n$, its indicator function and support function are respectively defined as
   \[
   \iota_D(x) =
   \begin{cases}
    0  & {\rm if} \ x \in D, \\
    \infty  & {\rm otherwise},
   \end{cases}
   \ \
   {\rm  and}
   \ \
   \sigma_D(x) = \sup\{\langle x, y \rangle : y \in D\}.
   \]
In addition, we denote the distance from an $x \in \R^n$ to $D$ by $\dist(x, D) = \inf_{y \in D} \|x - y\|$, and the convex hull of $D$ is denoted by $\conv(D)$.
For a (nonempty) closed convex set $C\subseteq \R^n$, we write $\ri\,C$ to represent its relative interior. We also use $P_C(x)$ to denote the projection of $x$ onto $C$, which exists and is unique since $C$ is nonempty, closed and convex. The tangent cone $T_C(x)$ for $C$ at an $x \in C$ is defined as
the closure of $ \bigcup_{t > 0} \frac{C - x}{t}$, while the normal cone $N_C(x)$ is defined as $N_C(x) := \{\xi\in \R^n: \langle \xi, y- x\rangle\le 0 \ \ \ \forall y\in C\}$. Recall that $N_C(x)$ is the polar set of $T_C(x)$. Finally, for a closed convex cone $K$, we use $\lin(K):=K\cap -K$ to denote the linearity space of $K$.

For an extended-real-valued function $g : \R^n \to \overline{\R} := \R \cup\{\pm\infty\}$, we denote its epigraph and domain respectively by
\[
   \epi g = \{(x, \alpha)\in \R^n\times \R : g (x) \le \alpha\}
   \ \
   {\rm  and}
   \ \
   \dom g = \{x\in \R^n : g (x) < \infty\}.
   \]
We call $g$ a proper function if $\dom g \neq \emptyset$ and $g (x) > - \infty$ for all $x \in \R^n$. The function $g$ is said to be lower semicontinuous (for short, lsc) if $\epi g$ is closed; we will also say that such a function is closed. \revise{The function $g$ is said to be polyhedral if $\epi g$ is a polyhedral set. We also use the notation $g\in C^k(\R^n)$ to indicate that $g$ has continuous $k$-th order partial derivatives on $\R^n$.} We next recall the definitions of regular subdifferential $\widehat\partial g(x)$ and (limiting) subdifferential $\partial g(x)$ of a proper function $g$ at an $x\in \dom g$:
\begin{align*}
\widehat\partial g(x) &= \{\xi \in \R^n: {\textstyle \liminf_{\tilde x\to x, \tilde x\neq x}\frac{g(\tilde x) - g(x) - \langle\xi,\tilde x - x\rangle}{\|\tilde x - x\|}\ge 0}\},\\
\partial g(x) &= \{\xi \in \R^n:\; \exists x^k \overset{g}\to x \ {\rm and}\ \xi^k \in \widehat \partial g(x^k) \ {\rm with}\ \xi^k \to \xi\},
\end{align*}
where $y \overset{g}\to x$ means both $y \to x$ and $g(y)\to g(x)$; we also set $\partial g(x) = \widehat \partial g(x) = \emptyset$ when $x\notin \dom g$.
For a proper convex function $g$, the above $\partial g$ coincides with the classical notion of convex subdifferential, i.e., for each $x\in \R^n$, we have
\[
\partial g(x) = \{ \xi \in \R^n : g(y) \ge g(x) + \langle \xi, y - x \rangle \ \ \ \forall y \in \R^n\};
\]
see \cite[Proposition~8.12]{rockafellar2009variational}.
Finally, for a set-valued mapping $S : \R^n \rightrightarrows \R^m$, we write
\[
   \gph S = \{(x, u)\in \R^n\times \R^m : u \in S(x)\}
   \ \
   {\rm  and}
   \ \
   \dom S = \{x\in \R^n : S(x)\neq \emptyset\}
   \]
for the graph and the domain of $S$, respectively.

% We next recall some notions of subderivatives, which will be useful for characterizing second-order stationary points later.
% \begin{defn}[{\cite[Definition 13.3]{rockafellar2009variational}}]
% Let $g:\R^n \to \overline{\R}$ be proper and $x\in \dom g$. The second subderivative at $x$ for $v$ and $w$ is defined as
% \[
% d^2g(x|v)(w):=\liminf_{t\downarrow 0,~\tilde w\to w}\frac{g(x+t\tilde w)-g(x)-t\langle v,\tilde w \rangle}{\frac{1}{2}t^2}.
% \]
% The second subderivative at $x$ for $w$ is defined as
% \[
% d^2g(x)(w):= \liminf_{t\downarrow 0,~\tilde w\to w}\frac{g(x+t\tilde w)-g(x)-t\,dg(x)(\tilde w)}{\frac{1}{2}t^2},
% \]
% where $dg(x):\R^n \to \overline{\R}$, defined as
% $dg(x)(w):=\liminf_{t\downarrow 0,~\tilde w\to w}\frac{g(x+t\tilde w)-g(x)}{t}$, is the subderivative of $g$ at $x$.
% \end{defn}
\subsection{\revise{The Kurdyka-\L ojasiewicz property}}
We next recall the definition of Kurdyka-\L ojasiewicz property.
\begin{defn}[Kurdyka-\L ojasiewicz property and exponent] \label{defkl} We say that a proper closed function $g: \R^n \rightarrow \mathbb{R} \cup\{\infty\}$ satisfies the Kurdyka-\L ojasiewicz (KL) property at $\bar{x} \in \dom \partial g$ if there are $a \in(0, \infty]$, a neighborhood $V$ of $\bar{x}$ and a continuous concave function $\varphi:[0, a) \rightarrow[0, \infty)$ with $\varphi(0)=0$ such that
\begin{enumerate}[{\rm (i)}]
  \item $\varphi$ is continuously differentiable on $(0, a)$ with $\varphi^{\prime}>0$ on $(0, a)$;
  \item for any $x \in V$ with $0<g(x)-g(\bar{x})<a$, it holds that
\begin{equation}
    \label{KLineq}
    \varphi^{\prime}(g(x)-g(\bar{x})) \operatorname{dist}(0, \partial g(x)) \geq 1.
\end{equation}
\end{enumerate}
If $g$ satisfies the KL property at $\bar{x} \in \operatorname{dom} \partial g$ and the $\varphi(s)$ in \eqref{KLineq} can be chosen as $\bar{c} s^{1-\alpha}$ for some $\bar{c}>0$ and $\alpha \in[0,1)$, then we say that $g$ satisfies the KL property at $\bar{x}$ with exponent $\alpha$.
\end{defn}

The KL property plays a key role in the convergence analysis of first-order methods (see \cite{AttBolSva13,attouch2010proximal,BolSabTeb14}), and the exponent is closely related to the local convergence rates of these methods (see \cite{attouch2009convergence}). The KL property is known to be satisfied by a large variety of functions \cite{attouch2010proximal}. In particular, it is known that proper closed semialgebraic functions satisfy the KL property with some exponent $\alpha\in [0,1)$; see \cite{bolte2007clarke}.

Our next remark concerns an observation on the neighborhood in Definition~\ref{defkl}(ii), and will be used in our arguments for establishing the KL property later.
\begin{rem}[On the neighborhood requirements for \eqref{KLineq}]
\label{rem2-3}
    Assume $\vp(s)=\bar cs^{1-\alpha}$ for some $\bar c > 0$ and $\alpha \in (0,1)$. Then \eqref{KLineq} can be equivalently written as
    \begin{equation}
        \label{KLexponentineq}
            \bar c(1-\alpha)\,\dist(0,\partial g(x))\geq (g(x)-g(\bar x))^\alpha.
    \end{equation}
    Notice that if there exist a neighborhood $V$ of $\bar x$ and $a > 0$ such that \eqref{KLexponentineq} holds for all $x\in V$ with $\dist(0,\partial g(x))<a$ and $0<g(x)-g(\bar{x})<a$, then, by setting $c=\max\{\bar c,a^{\alpha-1}/(1-\alpha)\}$ and $\vp(s)=cs^{1-\alpha}$, we know that \eqref{KLineq} holds for all $x\in V$ with $0<g(x)-g(\bar x)<a$.\footnote{Indeed, for those $x\in V$ satisfying $\dist(0,\partial g(x))\ge a$ and $0<g(x)-g(\bar{x})<a$, the choice of $c$ readily gives $c(1-\alpha)\,\dist(0,\partial g(x)) \ge c(1-\alpha)\cdot a \ge [a^{\alpha-1}/(1-\alpha)]\cdot (1-\alpha) \cdot a = a^\alpha > (g(x) - g(\bar x))^\alpha$.} Therefore, to verify that $g$ satisfies the KL property at $\bar x$ with exponent $\alpha \in (0,1)$, it suffices to show that there exist $\bar c$, $a > 0$ and a neighborhood $V$ of $\bar x$ such that \eqref{KLexponentineq} holds \revise{for $x\in V$} satisfying $0<g(x)-g(\bar{x})<a$ and $\dist(0,\partial g(x))<a$.
\end{rem}

The next proposition connects the KL property with a local error bound condition and is a direct consequence of ``(iii)$\Rightarrow$(i)" in  \cite[Theorem~2.1]{bai2022equivalence}. We refer the readers to \cite{bolte2017error,drusvyatskiy2018error,garrigos2023convergence} and references therein for related studies.
\begin{prop}[{\cite[Theorem 2.1]{bai2022equivalence}}]\label{prop3-12}
   Let $g:\R^n\to \overline{\R}$ be a proper closed function, and $\bar x$ be a local minimizer of $g$. Let $\Omega:=\{x\in \R^n:~g(x)=g(\bar x)\}$. Suppose $g$ satisfies the KL property at $\bar x$ with exponent $\alpha\in [0,1)$, then there exist a neighborhood $U$ of $\bar x$ and $\sigma>0$ such that $g(x)-g(\bar x)\geq \sigma\dist^{\frac{1}{1-\alpha}}(x,\Omega)$ for all $x\in U$.
%   \[
%   g(x)-g(\bar x)\geq \sigma\dist^{\frac{1}{1-\alpha}}(x,\Omega)\ \ \ \forall x\in U.
%   \]
\end{prop}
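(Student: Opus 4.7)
The plan is to establish the equivalent Hölderian upper bound $\dist(x,\Omega)\le C\,(g(x)-g(\bar x))^{1-\alpha}$ on some neighborhood of $\bar x$, from which the claimed inequality follows by raising both sides to the $\tfrac{1}{1-\alpha}$-th power and setting $\sigma=C^{-1/(1-\alpha)}$. Since $\bar x$ is a local minimizer of $g$, I would shrink the neighborhood $V$ supplied by the KL property (with $\varphi(s)=\bar c\, s^{1-\alpha}$ as per \cref{rem2-3}) so that $g(x)\ge g(\bar x)$ holds throughout; the case $g(x)=g(\bar x)$ is trivial because then $x\in\Omega$, so I may restrict attention to those $x\in V$ with $0<g(x)-g(\bar x)<a$.

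The core idea is to manufacture, from each such $x$, a descending sequence that converges to $\Omega$ without traveling much farther than $\varphi(g(x)-g(\bar x))$. Concretely, I would generate the proximal sequence $x_0:=x$ and $x_{k+1}\in\argmin_y\{g(y)+\tfrac{1}{2\lambda}\|y-x_k\|^2\}$ for a sufficiently small $\lambda>0$. Comparing the objective at $y=x_{k+1}$ with $y=x_k$ yields the sufficient-decrease estimate $g(x_k)-g(x_{k+1})\ge \tfrac{1}{2\lambda}\|x_{k+1}-x_k\|^2$, while the first-order optimality condition $(x_k-x_{k+1})/\lambda\in\partial g(x_{k+1})$ gives the subgradient bound $\dist(0,\partial g(x_{k+1}))\le\|x_{k+1}-x_k\|/\lambda$. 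Combining these two estimates with the KL inequality at $x_{k+1}$ and applying the concavity of $\varphi$ in the usual telescoping fashion (as in the abstract convergence arguments of \cite{AttBolSva13,attouch2010proximal}) yields the summability $\sum_{k\ge 0}\|x_{k+1}-x_k\|\le C\,\varphi(g(x_0)-g(\bar x))$ for a constant $C$ depending only on $\lambda$ and $\bar c$. Therefore $\{x_k\}$ converges to some $x_\infty$ satisfying $\|x-x_\infty\|\le C\bar c\,(g(x)-g(\bar x))^{1-\alpha}$, and since $g(x_k)$ is monotonically decreasing toward a limit $\ell\ge g(\bar x)$, lower semicontinuity of $g$ forces $g(x_\infty)=\ell$; invoking the KL inequality once more at the tail of the sequence (or else the stopping criterion $x_{k+1}=x_k$) rules out $\ell>g(\bar x)$, so $x_\infty\in\Omega$ and the sought upper bound on $\dist(x,\Omega)$ follows.

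The main technical hurdle I anticipate is ensuring that the entire proximal iteration remains inside the neighborhood $V$ on which the KL inequality holds, and that $g(x_k)-g(\bar x)$ stays inside $(0,a)$ throughout. This is handled by shrinking the initial neighborhood: because the total-length bound $\sum_k\|x_{k+1}-x_k\|\le C\bar c(g(x)-g(\bar x))^{1-\alpha}$ tends to zero uniformly as $x\to\bar x$, one can confine the sequence to $V$ by taking $x$ sufficiently close to $\bar x$. A careful induction on $k$, combined with the possibility of terminating the sequence early when $x_{k+1}=x_k$ (in which case $x_k$ already lies in $\Omega$), makes this rigorous and yields the statement.
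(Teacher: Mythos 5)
The paper supplies no proof of \cref{prop3-12} --- it is imported verbatim from \cite[Theorem~2.1]{bai2022equivalence} --- and your proximal-sequence construction is the standard route to this implication, so the overall strategy is the right one. Two steps, however, fail as written. First, the iterate $x_{k+1}\in\argmin_y\{g(y)+\tfrac{1}{2\lambda}\|y-x_k\|^2\}$ need not exist: $g$ is only proper and closed, hence possibly not prox-bounded. For instance, $g(x)=x^2-x^4/100$ on $\R$ has a local minimizer at $0$ and satisfies the KL property there with exponent $\tfrac12$, yet the regularized objective is unbounded below for every $\lambda>0$, so the very first proximal step is undefined. Even when the argmin is nonempty, $x_{k+1}$ is a \emph{global} minimizer of the regularized problem and can land far outside the KL neighborhood $V$ no matter how close $x_k$ is to $\bar x$ (append a sufficiently deep well to $g$ away from $\bar x$); your mechanism for confining the iterates --- the total-length bound --- cannot be invoked here, because that bound is derived by applying the KL inequality along the sequence and therefore presupposes that the iterates already lie in $V$, so the induction has no base case. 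The standard repair is to run the iteration on $g+\iota_{\bB(\bar x,r)}$ with $\bB(\bar x,2r)\subseteq V$: this function is prox-bounded, its proximal map takes values in $\bB(\bar x,r)$, and it agrees with $g$, with $\partial g$, and with the KL inequality on the open ball, into whose interior the length estimate then genuinely confines the sequence.

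Second, the claimed bound $\sum_{k\ge 0}\|x_{k+1}-x_k\|\le C\,\varphi(g(x_0)-g(\bar x))$ is not what the usual telescoping delivers. Writing $r_k=g(x_k)-g(\bar x)$, the recursion $\|x_{k+1}-x_k\|\le\tfrac12\|x_k-x_{k-1}\|+\varphi(r_k)-\varphi(r_{k+1})$ controls every step of index $k\ge 1$ (it uses the prox relation for the step that \emph{produced} $x_k$), while the only available bound on the first step is the sufficient-decrease estimate $\|x_1-x_0\|\le\sqrt{2\lambda r_0}$. The total length is therefore only $O\bigl(r_0^{1-\alpha}+\sqrt{\lambda r_0}\bigr)$. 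For $\alpha\ge\tfrac12$ the first term dominates and you obtain the stated exponent, but for $\alpha<\tfrac12$ the square-root term dominates and the argument yields only $g(x)-g(\bar x)\ge\sigma\dist^{2}(x,\Omega)$, strictly weaker than the claimed $\dist^{1/(1-\alpha)}$ bound (which is the sharp one, as $g(x)=|x|^{1/(1-\alpha)}$ shows), and the proposition is used in the paper for all $\alpha\in(0,1)$. This is fixable --- e.g.\ let the prox parameter depend on the starting gap, $\lambda\sim r_0^{1-2\alpha}$, so that $\sqrt{2\lambda r_0}\lesssim r_0^{1-\alpha}$; the constants in the telescoped recursion are $\lambda$-free, so nothing else breaks --- but as written the proof does not establish the result for $\alpha\in[0,\tfrac12)$. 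Two minor points: lower semicontinuity gives only $g(x_\infty)\le\ell$, not equality (the inequality suffices once $\ell=g(\bar x)$ is established); and since $g$ need not be continuous, points of $U$ with $g(x)-g(\bar x)\ge a$ escape the KL regime and must be absorbed by shrinking $\sigma$.
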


%We also recall a well-known result, which will be used repeatedly later.
%\begin{lem}\label{lemma2-6}
%    For any $p,q>0$ and $n\in\mathbb{N}$, there exist constants $c_{p,q,n} > 0$ and $C_{p,q,n} > 0$ depending only on $p,q,n$ such that for all $c\in \R^n_+$ it holds that
%    $
%    c_{p,q,n} \left(\sum_{i=1}^n c_i^q\right)^{\frac{1}{q}}  \leq \left(\sum_{i=1}^n c_i^p\right)^{\frac{1}{p}}\leq  C_{p,q,n}   \left(\sum_{i=1}^n c_i^q\right)^{\frac{1}{q}}.
%    $
%\end{lem}

\subsection{Hadamard difference parametrization of $f$}
In this subsection, we collect some useful facts concerning $F$ in \eqref{relaxprob1}. First, we note that the gradient and Hessian of $F$ are given respectively by
\begin{equation}\label{stationary}
\nabla F(a,b)=\left[\begin{matrix}
        2a\circ \nabla h(a^2-b^2)+2\mu a \\
        -2b\circ \nabla h(a^2-b^2)+2\mu b
    \end{matrix}\right],
\end{equation}
\vspace{-0.5 cm}
\begin{equation}
\label{hessianF}
  \begin{aligned}
  \nabla^{2} F(a, b) = &~4\left[
  \begin{matrix}
   \diag(a)  \\
   -\diag(b)
  \end{matrix}
  \right]
  \nabla^2 h(a^2 - b^2)
   \left[
  \begin{matrix}
   \diag(a)  \\
   -\diag(b)
  \end{matrix}
  \right]^{\top}\\
  &+
  2\left[
  \begin{matrix}
   \diag(\nabla h(a^2-b^2)) + \mu I_{n} & \\
    & -\diag(\nabla h(a^2 - b^2)) + \mu I_{n}
  \end{matrix}
  \right].
  \end{aligned}
\end{equation}
This calculation directly yields the next characterization of stationary points of $F$.
\begin{prop}
\label{prop1-2}
Let $F$ and $h$ be defined in \eqref{relaxprob1} and \eqref{l1prob} respectively.
    The point $(a,b)$ is a stationary point of $F$ if and only if for each $i\in [n]$ at least one of the \revise{following} is true:
    \begin{enumerate}[\rm (i)]
        \item $a_i=b_i=0$.
        \item $a_i=0$, $\nabla h(a^2-b^2)_i=\mu$.
        \item $b_i=0$, $\nabla h(a^2-b^2)_i=-\mu$.
    \end{enumerate}
\end{prop}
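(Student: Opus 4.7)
The plan is to work directly from the explicit gradient formula \eqref{stationary} and carry out a componentwise case analysis, where the strict positivity of $\mu$ is the key ingredient that rules out one configuration. Since $F \in C^2(\R^n \times \R^n)$, stationarity simply means $\nabla F(a,b) = 0$, so there is no subdifferential machinery to handle here.

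First I would abbreviate $s := a^2 - b^2$ and, reading off \eqref{stationary}, rewrite the condition $\nabla F(a,b) = 0$ as the pair of componentwise identities
\begin{equation*}
a_i \bigl(\nabla h(s)_i + \mu\bigr) = 0 \quad \text{and} \quad b_i \bigl(-\nabla h(s)_i + \mu\bigr) = 0 \qquad \forall\, i \in [n].
\end{equation*}
Next I would fix an index $i$ and examine the four a priori possibilities according to whether $a_i$ and $b_i$ vanish. If both are nonzero, the two identities simultaneously force $\nabla h(s)_i = -\mu$ and $\nabla h(s)_i = \mu$, which is impossible because $\mu > 0$; this configuration is therefore ruled out. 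The remaining three subcases correspond exactly to (i)--(iii): if $a_i = b_i = 0$ we obtain (i); if $a_i = 0$ and $b_i \neq 0$, the second identity forces $\nabla h(s)_i = \mu$, yielding (ii); symmetrically, if $a_i \neq 0$ and $b_i = 0$, the first identity forces $\nabla h(s)_i = -\mu$, yielding (iii).

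The converse direction is a one-line verification: each of (i)--(iii), when it holds at some index $i$, plainly satisfies both componentwise identities at that index. There is no genuine obstacle in this argument --- the only substantive observation is that $\mu > 0$ is precisely what prevents $a_i$ and $b_i$ from being simultaneously nonzero at a stationary point, which in turn dictates the sparsity-type pattern (i)--(iii). I would present the proof as a short bookkeeping lemma, since the characterization will be used repeatedly in the subsequent analyses of second-order stationarity and the KL exponent of $F$.
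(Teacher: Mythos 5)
Your proof is correct and takes essentially the same route the paper intends: the paper states only that the characterization ``directly'' follows from the gradient formula \eqref{stationary}, and your componentwise case split on whether $a_i$, $b_i$ vanish, with $\mu>0$ ruling out the both-nonzero configuration, is exactly the bookkeeping being left implicit.
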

% \begin{proof}
%    Consider the stationarity equation $\nabla F(a,b)=0$ of $F$ and the representation of $\nabla F$ in \eqref{stationary}, we have for all $i \in [n]$ that
%     \begin{equation}\label{stationary_eq}
%    \begin{cases}
%       2a_i\nabla h(a^2 - b^2)_i + 2\mu a_i=0,\\
%       -2b_i\nabla h(a^2-b^2)_i + 2\mu b_i=0,
%       \end{cases}
%     \end{equation}
% where the notation $\nabla h(a^2-b^2)_i$ means the $i$th component of the vector $\nabla h(a^2-b^2)$. This implies the claimed characterization of the stationary points of $F$.
% \end{proof}

We note that the equivalence between the local minima of \eqref{l1prob} and \eqref{relaxprob1} in the sense of \cite[Definition~2.2]{kolb2023smoothing} was studied thoroughly in \cite{kolb2023smoothing}.
The next proposition characterizes the local minimizers of \eqref{relaxprob1}, and can be derived from \cite[Theorem 2]{ziyin2023spred} by using the transformation in \eqref{abuv}.
 \begin{prop}[Local minimizers of $f$ and $F$]
 \label{localmin_charac}
   For $(\bar a,\bar b)\in\R^{2n}$, the following statements are equivalent:
    \begin{enumerate}[\rm(i)]
        \item The point $\bar a^2-\bar b^2$ is a local minimizer of $f$ in \eqref{l1prob}, and $\min\{\bar a^2,\bar b^2\} = 0$.
        \item The point $(\bar a,\bar b)$ is a local minimizer of $F$ in \eqref{relaxprob1}.
    \end{enumerate}
\end{prop}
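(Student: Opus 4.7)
The plan is to work from the pointwise identity
\[
F(a,b) = f(a^2 - b^2) + 2\mu\,\|\min\{a^2, b^2\}\|_1,
\]
which is immediate from \eqref{relaxprob1} and the definition of $f$. Since $\min\{a^2,b^2\}\ge 0$ componentwise, one always has $F(a,b)\ge f(a^2-b^2)$, with equality precisely when $\min\{a^2,b^2\} = 0$. This identity reduces the whole proposition to the interplay between the continuous map $(a,b)\mapsto a^2-b^2$ and the existence of continuous local sections of this map through points $(\bar a,\bar b)$ satisfying $\min\{\bar a^2,\bar b^2\} = 0$.

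For (i)$\Rightarrow$(ii), set $\bar x := \bar a^2 - \bar b^2$. The hypothesis gives $F(\bar a,\bar b) = f(\bar x)$. For $(a,b)$ near $(\bar a,\bar b)$, continuity of $(a,b)\mapsto a^2-b^2$ forces $a^2 - b^2$ to lie near $\bar x$; local minimality of $\bar x$ for $f$ then yields $f(a^2-b^2)\ge f(\bar x)$, and combined with $F(a,b)\ge f(a^2-b^2)$ we obtain $F(a,b)\ge F(\bar a,\bar b)$.

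For (ii)$\Rightarrow$(i), two things must be checked. First I would argue $\min\{\bar a^2,\bar b^2\} = 0$ by contradiction: if $\bar a_i\ne 0$ and $\bar b_i\ne 0$ for some $i$, set $s := \bar a_i^2-\bar b_i^2$ and (assuming $s\ge 0$ without loss of generality, the other case being symmetric in $a$ and $b$) consider the one-parameter family that leaves coordinates $j\ne i$ fixed at $(\bar a_j,\bar b_j)$ and sets $a_i(t) = \mathrm{sign}(\bar a_i)\sqrt{s+t}$, $b_i(t) = \mathrm{sign}(\bar b_i)\sqrt{t}$ for $t\in[0,\bar b_i^2]$. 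This family reaches $(\bar a,\bar b)$ at $t=\bar b_i^2$, preserves $a^2-b^2$ (hence the value of $h$), and its contribution to the regularization term is $\mu(s+2t)$, which strictly decreases as $t$ decreases; this contradicts local minimality of $(\bar a,\bar b)$. Second, given $\min\{\bar a^2,\bar b^2\} = 0$, I would build a continuous local section $x\mapsto(a(x),b(x))$ with $(a(\bar x),b(\bar x)) = (\bar a,\bar b)$, $a(x)^2 - b(x)^2 = x$, and $\min\{a(x)^2,b(x)^2\} = 0$, coordinate by coordinate: if $\bar a_i\ne 0$ then $\bar b_i = 0$ and $\bar x_i > 0$, so set $a_i(x) = \mathrm{sign}(\bar a_i)\sqrt{x_i}$, $b_i(x) = 0$; symmetrically if $\bar b_i\ne 0$; and if $\bar a_i = \bar b_i = 0$ set $a_i(x) = \sqrt{\max\{x_i,0\}}$, $b_i(x) = \sqrt{\max\{-x_i,0\}}$, which is continuous at $0$. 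The key identity then gives $F(a(x),b(x)) = f(x)$, and continuity of the section transfers local minimality of $(\bar a,\bar b)$ for $F$ to local minimality of $\bar x$ for $f$.

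The main obstacle is the coordinate-by-coordinate construction of the continuous local section in the second half of the reverse direction: the section must simultaneously realize the preimage $a^2-b^2 = x$, remain continuous at $\bar x$ (so that the signs of $\bar a_i$ and $\bar b_i$ are respected whenever these are nonzero), and preserve $\min\{a^2,b^2\} = 0$ so that the equality case of the key identity applies. This is precisely where the hypothesis $\min\{\bar a^2,\bar b^2\} = 0$ is indispensable, since without it no continuous preimage with $F(a,b) = f(x)$ could pass through $(\bar a,\bar b)$.
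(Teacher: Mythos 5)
Your argument is correct and complete. The identity $F(a,b)=f(a^2-b^2)+2\mu\|\min\{a^2,b^2\}\|_1$ is exactly the second line of \eqref{relaxprob1}, the forward direction follows cleanly from continuity of $(a,b)\mapsto a^2-b^2$ together with the equality case of that identity, the deformation $a_i(t)=\mathrm{sign}(\bar a_i)\sqrt{s+t}$, $b_i(t)=\mathrm{sign}(\bar b_i)\sqrt{t}$ correctly keeps $a^2-b^2$ fixed while strictly decreasing the regularizer (so it rules out $\min\{\bar a^2,\bar b^2\}>0$ at a local minimizer), and your coordinate-wise continuous section $x\mapsto(a(x),b(x))$ is well defined on a small enough neighborhood of $\bar x$ (where $x_i>0$ whenever $\bar a_i\neq 0$ and $x_i<0$ whenever $\bar b_i\neq 0$) and satisfies $F(a(x),b(x))=f(x)$, which transfers local minimality back to $f$. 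The paper itself does not prove this proposition: it simply states that the result ``can be derived from [Theorem 2, ziyin2023spred] by using the transformation in \eqref{abuv}'', i.e., it pulls the characterization of local minimizers of the Hadamard-product form $G(u,v)$ from an external reference and transports it through the invertible linear change of variables $a=(u+v)/2$, $b=(u-v)/2$. Your proof is therefore a genuinely different, self-contained route: it buys independence from the cited theorem and makes explicit the two mechanisms at work (the exact-penalty-type identity and the existence of a continuous local section through points with $\min\{\bar a^2,\bar b^2\}=0$), at the cost of the coordinate-by-coordinate case analysis that the citation-based argument hides inside the external result.
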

% \begin{proof}
%     (i)$\Rightarrow$(ii): Let $H:\R^{2n}\to\R^n$ be defined as $H(a,b)=a^2-b^2$. We assume that there exists a neighborhood $V$ of $\bar s:=\bar a^2-\bar b^2$ such that for all $s\in V$, it holds that $f(s)\geq f(\bar s)$. Since $H$ is continuous, we can choose an open neighborhood $U$ of $(\bar a,\bar b)$ such that $U\subseteq H^{-1}(V)$. Furthermore, from $\min\{\bar a^2,\bar b^2\}=0$, it follows from \eqref{relaxprob1} that
%     \[
%     F(a,b)\geq f(H(a,b))\geq f(\bar s)=F(\bar a,\bar b)\ \ \ \ \forall (a,b)\in U,
%      \]
%     which implies that $(\bar a,\bar b)$ is a local minimizer of $F$.

%     (ii)$\Rightarrow$(i): According to \cite[Corollary 3.7 and Definition 2.2(c)]{kolb2023smoothing}, we know $\bar a^2-\bar b^2$ is a local minimizer of \eqref{l1prob} and $f(\bar a^2-\bar b^2)=F(\bar a,\bar b)$. Using the representation of $F$ in the second line of \eqref{relaxprob1}, we see that $\min\{\bar a^2,\bar b^2\}=0$.
% \end{proof}

We end this subsection with the following remark, where we describe in item (ii) below an important yet simple observation (a reduction technique) that will allow us to largely simplify our subsequent analysis.
\begin{rem}
\label{remarkab}
\begin{enumerate}[{\rm (i)}]
  \item We deduce from \cref{prop1-2} that $\min\{\bar a^2,\bar b^2\}=0$ at any stationary point $(\bar a,\bar b)$ of $F$ in \eqref{relaxprob1}; see also \cite[Theorem 2]{ziyin2023spred}.
  \item (Reduction technique). \revise{When establishing additional properties of stationary points that are invariant under invertible linear transformations, such as second-order stationarity, local optimality and the KL exponent, we will often introduce a special linear transformation to simplify the argument. We now describe how such a linear transformation is constructed.}

      Let $(\bar a,\bar b)$ be a stationary point of $F$ in \eqref{relaxprob1} and set $\bar s=\bar a^2-\bar b^2$ for notational simplicity.
    Define the following index sets
    \begin{align*}
        &I^1=\{i\in [n]:~\bar a_i>0\}\cup \{i\in [n]:~\bar a_i=\bar b_i=0,\nabla h(\bar s)_i\neq \mu\},\\
        &I^2=\{i\in [n]:~\bar b_i>0\}\cup \{i\in [n]:~\bar a_i=\bar b_i=0,\nabla h(\bar s)_i=\mu\},\\
        &I^3=\{i\in [n]:~\bar a_i<0\},~I^4=\{i\in [n]:~\bar b_i<0\}.
    \end{align*}
    Observe from \cref{prop1-2} (see also \eqref{stationary}) that $\{I^1,I^2,I^3,I^4\}$ forms a partition of $[n]$.
    Next, define the following invertible linear mappings:
    \begin{align*}
        &H(a,b)=(c,d),\ {\rm where}\  (c_i,d_i)=\begin{cases}
          (a_i,b_i) & {\rm if}\ i\in I^1, \\
          (b_i,a_i) & {\rm if}\ i\in I^2, \\
           (-a_i,b_i) & {\rm if}\  i\in I^3, \\
            (-b_i,a_i) & {\rm if}\  i\in I^4,\\
        \end{cases}
        \\&P(s)=t,\ {\rm where}\ t_i=\begin{cases}
            s_i & {\rm if}\ i\in I^1\cup I^3, \\
            -s_i & {\rm if}\ i\in I^2\cup I^4.\\
        \end{cases}
    \end{align*}
    Observe that $P = P^{-1}$. For any $(a,b)\in \R^n\times \R^n$, upon setting $(c,d)=H(a,b)$, $s=a^2-b^2$ and $t=P(s)$, we have from direct computation that $t=c^2-d^2$ and
    \begin{align*}
       &f(s)=f(P\circ P(s))=h\circ P(t)+\mu\|t\|_1 =: \widehat f(t), \\
       & F(a,b)=F(H\circ H(a,b))=h\circ P(c^2-d^2)+\mu \|c\|^2+\mu\|d\|^2 =: \widehat F(c,d),
    \end{align*}
    which means that $\widehat F$ can be viewed as an HDP of $\widehat f$.

    Let $(\bar c,\bar d )=H(\bar a,\bar b)$ and $\bar t =P(\bar s)$. We see that $\bar c\geq 0$, $\bar d=0$, and $\nabla (h\circ P)(\bar t)_i=P(\nabla h(\bar s))_i=-\mu$ whenever $\bar t_i=0$ and $\nabla (h\circ P)(\bar t)_i\in \{-\mu,\mu\}$.\footnote{Indeed, $\bar t_i = 0$ implies $\bar a_i^2 = \bar b_i^2$, which together with item (i) shows that $\bar a_i = \bar b_i = 0$. This implies that $i \in I^1\cup I^2$. Now, if $\nabla (h\circ P)(\bar t)_i\in \{-\mu,\mu\}$, since $\nabla (h\circ P)(\bar t)_i=P(\nabla h(\bar s))_i$, we must have $\nabla h(\bar s)_i\in \{-\mu,\mu\}$ as well. This together with $i\in I^1\cup I^2$ and the definition of $P$ shows that $P(\nabla h(\bar s))_i=-\mu$ as claimed.}
    %Since the invertible linear transformations $H$ and $P$ won't change second-order stationarity, local optimality, and the KL exponent,
    By replacing $f$ by $\widehat f$ and $F$ by $\widehat F$ if necessary, we could assume $\bar a\geq 0$ and $\bar b=0$ when $(\bar a, \bar b)$ is a stationary point of $F$, and if further $\bar s=\bar a^2-\bar b^2$ is also a stationary point of $f$, we could assume $\nabla h(\bar s)_i=-\mu$ whenever $\bar s_i=0$ and $\nabla h(\bar s)_i\in\{-\mu,\mu\}$. We will state explicitly whether we make such assumptions in our proofs below.
\end{enumerate}
\end{rem}

\section{Variational properties of $F$ from $f$}\label{sec3}

In this section, we analyze some variational properties of $F$ in \eqref{relaxprob1}. \revise{Specifically, we characterize the second-order stationary points of $F$ and relate them to some stationary points of $f$ in \eqref{l1prob}. We also study how the KL exponent of $F$ at those points can be inferred from the KL exponent of $f$.} Since second-order stationarity and KL exponents are invariant under invertible linear transformations, we also obtain a characterization of the second-order stationary points of $G$ in \eqref{relaxprob} and the KL exponent of $G$ at these points.

\subsection{2nd-order stationary points of $F$}\label{sec31}
We first relate the second-order stationary points of $F$ in \eqref{relaxprob1} to some stationary points of $f$ in \eqref{l1prob}.

\begin{prop}[2nd-order stationary points of $F$]\label{prop2ndorder}
Let $f$ and $F$ be defined in \eqref{l1prob} and \eqref{relaxprob1} respectively. Then for all $(a,b)\in\R^n\times \R^n$, the following statements are equivalent:
    \begin{enumerate}[\rm(i)]
        \item The point $s:=a^2 - b^2$ is a stationary point of $f$, $\min\{a^2,b^2\}=0$, \revise{and $\nabla^2h(s)_{II}\succeq 0$, where $I=\{i\in[n]:~s_i\neq 0\}$.}
        \item The point $(a,b)$ is a second-order stationary point of $F$.
    \end{enumerate}
\end{prop}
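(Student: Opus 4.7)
The plan is to read the conditions in (i) off the explicit Hessian \eqref{hessianF} by testing $\nabla^2 F(a,b)$ against carefully chosen vectors. Before doing so, I would invoke the reduction in \cref{remarkab}(ii), which is admissible because second-order stationarity is invariant under the invertible linear transformation $H$; under this reduction we may assume $a\ge 0$ and $b=0$, so $s=a^2\ge 0$ and $I=\{i:a_i>0\}$ is the support of $s$.

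For (ii)$\Rightarrow$(i), second-order stationarity trivially implies first-order stationarity of $F$, so \cref{prop1-2} combined with \cref{remarkab}(i) already yields $\min\{a^2,b^2\}=0$ and $\nabla h(s)_i=-\mu$ at every $i\in I$. Since stationarity of $f$ requires, in addition, that $\nabla h(s)_i\in[-\mu,\mu]$ at each index where $s_i=0$ (equivalently, $a_i=b_i=0$ under our reduction), I would extract this last piece by plugging $v=(e_i,0)$ and $v=(0,e_i)$ into \eqref{hessianF}: both vanish against the $M\nabla^2 h(s)M^\top$ block because the corresponding row of $M$ is zero, and the diagonal block contributes $2(\nabla h(s)_i+\mu)$ and $2(-\nabla h(s)_i+\mu)$ respectively, forcing $|\nabla h(s)_i|\le\mu$. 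To derive $\nabla^2 h(s)_{II}\succeq 0$, I would then test against $v=(u,0)$ with $u_i=t_i/a_i$ for $i\in I$ and $u_i=0$ otherwise; the diagonal contribution over $I$ vanishes because $\nabla h(s)_i+\mu=0$ there, so the quadratic form collapses to $4\,t_I^\top\nabla^2 h(s)_{II}\,t_I$, which must be nonnegative for every $t_I$.

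For (i)$\Rightarrow$(ii), stationarity of $f$ and $\min\{a^2,b^2\}=0$ directly supply the conditions in \cref{prop1-2}, so $\nabla F(a,b)=0$. For positive semidefiniteness of $\nabla^2 F(a,b)$, I would expand $v^\top\nabla^2 F(a,b)v$ for an arbitrary $v=(u,w)\in\R^{2n}$, writing $z:=\diag(a)u-\diag(b)w$. Splitting by index: for $i\notin I$ we have $a_i=b_i=0$, hence $z_i=0$, while both diagonal contributions are nonnegative thanks to $|\nabla h(s)_i|\le\mu$; for $i\in I$ exactly one of $\nabla h(s)_i\pm\mu$ is zero and the other equals $2\mu\ge 0$. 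Therefore $z$ is supported on $I$, and the cross term reduces to $4\,z_I^\top\nabla^2 h(s)_{II}\,z_I\ge 0$ by assumption, so $v^\top\nabla^2 F(a,b)v\ge 0$.

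The main obstacle I anticipate is conceptual rather than technical: one must recognize that the ``missing'' portion of $f$-stationarity, namely the inclusion $|\nabla h(s)_i|\le\mu$ at indices with $a_i=b_i=0$, is precisely what the PSD condition on the diagonal block of \eqref{hessianF} encodes at those indices; and that the $M\nabla^2 h(s)M^\top$ block, when probed by a vector respecting the support of $a$, extracts exactly the principal submatrix $\nabla^2 h(s)_{II}$. Once these two observations are in hand, both implications reduce to short direct computations, which the reduction in \cref{remarkab}(ii) streamlines by eliminating the sign bookkeeping.
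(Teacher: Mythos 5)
Your proposal is correct and follows essentially the same route as the paper's proof: the same reduction via \cref{remarkab}(ii) to $a\ge 0$, $b=0$, the same extraction of $|\nabla h(s)_i|\le\mu$ for $i\notin I$ by testing the Hessian against vectors vanishing on $I$, and the same identification of the $\nabla^2h(s)_{II}$ block by testing against vectors supported on $I$ in the first component. The only cosmetic difference is that you write the test vectors more explicitly (e.g.\ $u_i=t_i/a_i$), whereas the paper works with $(u_I\circ a_I)^\top\nabla^2h(s)_{II}(u_I\circ a_I)$ directly; just make sure, as you do in the body of your argument, that $\nabla F(a,b)=0$ is established before the reduction of \cref{remarkab}(ii) is invoked in the (i)$\Rightarrow$(ii) direction.
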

\begin{proof}
  Before proving the equivalence, \revise{we first note that an $\bar x\in \R^n$ being} a stationary point of $f$ means that $-\nabla h(\bar x)\in \mu\partial \|\bar x\|_1$, which can be expressed as follows:
     \begin{equation}\label{nonsmooth_stationary}
     \begin{cases}
        \nabla h(\bar x)_i = -\mu\cdot\mathrm{sgn}(\bar x_i)  \ \ \ \ \ \ {\rm if}\, \bar x_i \neq 0,\\
        \nabla h(\bar x)_i\in [-\mu, \mu]  \ \  \ \  \ \ \ \  \ \ \  \ \ {\rm if}\, \bar x_i = 0.
     \end{cases}
     \end{equation}

    (i)$\Rightarrow$(ii): We can deduce $\nabla F(a, b) = 0$ from \eqref{stationary}, \eqref{nonsmooth_stationary} and the assumption $\min\{a^2,b^2\}=0$. We next show that $\nabla^{2} F(a, b) \succeq 0$. Since $\nabla F(a, b) = 0$, according to \cref{remarkab}(ii), we may assume without loss of generality that $a\geq 0$ and $b=0$.

    \revise{In this case, we see that} $I=\{i\in [n]: \ a_i>0\}$. \revise{Due to the positive semidefiniteness of $\nabla^2h(s)_{II}$,} we can deduce that (recall that we assumed $b=0$)
    \[
    \begin{aligned}
    &\left[
    \begin{matrix}
     u  \\
     v
    \end{matrix}
    \right]^{\top}\left[
    \begin{matrix}
     \diag(a)  \\
     -\diag(b)
    \end{matrix}
    \right]
    \nabla^2 h(s)
     \left[
    \begin{matrix}
     \diag(a)  \\
     -\diag(b)
    \end{matrix}
    \right]^{\top}\left[
    \begin{matrix}
     u  \\
     v
    \end{matrix}
    \right]
    \\& =  (u\circ a)^\top\nabla^2h(s)(u\circ a)
    = (u_I\circ a_I)^\top\nabla^2h(s)_{II}(u_I\circ a_I) \ge 0\ \ \ \forall (u,v)\in \revise{\R^n\times \R^n}.
    \end{aligned}
    \]
    Furthermore, since $\mu > 0$, we have from \eqref{nonsmooth_stationary} that
    \[
    \pm\diag(\nabla h(s)) + \mu I_{n} \succeq 0.
    \]
  The above two displays together with \eqref{hessianF} show that $\nabla^2 F(a,b)\succeq 0$. This completes the proof of (i)$\Rightarrow$(ii).

     (ii)$\Rightarrow$(i): From \cref{remarkab}(i), we know that $\min\{a^2,b^2\}=0$. In view of \cref{remarkab}(ii), we assume without loss of generality that $b=0$ and $a \ge 0$, and \revise{then} $I:=\{i\in [n]: \ a_i>0\}$. From \eqref{stationary} and $\nabla F(a,b) = 0$, we have
     \begin{equation}\label{gradhmu}
      \nabla h(s)_i = -\mu\ \ \ \forall i \in I.
     \end{equation}
    Thus, to show that $s$ is a stationary point of \eqref{l1prob}, it suffices to prove that $\nabla h(s)_i \in [-\mu,\mu]$ for all $i\notin I$. To this end, we note from $\nabla^{2} F(a, b) \succeq 0$ that
    \[
    [u^{\top}~ v^{\top}]\nabla^{2} F(a, b)\left[
    \begin{matrix}
     u  \\
     v
    \end{matrix}
    \right] \ge 0\ \ \ \forall (u,v)\in\R^n\times \R^n.
    \]
    Recall that $b = 0$. The above display and \eqref{hessianF} imply that for all $(u,v)\in\R^n\times \R^n$,
     \begin{equation}\label{hessian-eq}
       \textstyle 2\sum_{i=1}^n  \left[(\mu + \nabla h(s)_i)u_i^2 + (\mu - \nabla h(s)_i)v_i^2\right] + 4(u\circ a)^\top\nabla^2h(s)(u\circ a)  \ge 0.
     \end{equation}
     By setting $u_I=0$ in \eqref{hessian-eq} so that $u\circ a=0$, we deduce that
     $\nabla h(s)_i\in [-\mu, \mu]$ for all $i \notin I$.
     This together with \eqref{gradhmu} shows that $s$ is a stationary point of $f$.

     Finally, by setting $v=0$ and $u_i=0$ for all $i\notin I$ in \eqref{hessian-eq}, and recalling \eqref{gradhmu} and $a_i>0$ for all $i\in I$, we see further that $\nabla^2h(s)_{II}$ is positive semidefinite.
    %  Since $\lin({\cal S}(s))=\{v\in \R^n:~v_i=0\ \ \forall i\notin I\}$, where ${\cal S}$ is defined in \eqref{Sdef}, we have
    %  \[
    %  \langle w, \nabla^2h(s)w\rangle + \iota_{{\cal S}(s)}(w) = \langle w, \nabla^2h(s)w\rangle \ge 0\ \ \ \forall w\in \lin({\cal S}(s)),
    %  \]
    %  which implies that $d^2f(s|0)(w)\geq 0$ for all $w\in \lin(\dom d^2f(s|0))$ in view of \eqref{fssub}.
  \end{proof}

Next, we analyze the \revise{set of minimizers} of convex composite functions. A similar result was established in \cite[Proposition 1]{zhou2017unified} by assuming that $h$ is the sum of a composition of a locally strongly convex and Lipschitz differentiable function with a linear mapping. The proof strategy is inspired by \cite[Theorem 2.1.5]{nesterov2018lectures}. \revise{The proof of \cref{prop2-2} can be found in \cref{appendixA}.}
\begin{lem}\label{prop2-2}
     Assume that $\psi=g+\vp$ with $g,\vp$ being proper closed convex functions, $g\in C^2(\R^n)$, and $\cX:=\argmin\psi\neq\emptyset$. Then, for any $x,y\in \cX$, we have $\nabla g(x)=\nabla g(y)$. Moreover, if we let $v=\nabla g(x)$ for some $x\in \cX$, then $\cX=\nabla g^{-1}(v)\cap \partial \vp^{-1}(-v)$.
\end{lem}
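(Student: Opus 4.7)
The plan is to establish the first claim by showing $g$ is affine along the line segment joining any two minimizers and then upgrade this, via positive semidefiniteness of $\nabla^2 g$, to the statement that $\nabla g$ is constant on that segment. The second claim will then follow directly from the convex subdifferential sum rule applied to the optimality condition for $\psi$.

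For the first claim, I would fix $x,y\in\cX$ and $t\in[0,1]$. Since $\psi$ is convex, $\cX$ is convex, so $tx+(1-t)y\in\cX$ and hence $\psi(tx+(1-t)y)=t\psi(x)+(1-t)\psi(y)$. Combined with the convexity of $g$ and $\vp$ separately (which each yield a ``$\le$''), this forces both $g(tx+(1-t)y)=tg(x)+(1-t)g(y)$ and $\vp(tx+(1-t)y)=t\vp(x)+(1-t)\vp(y)$. Thus $t\mapsto g(x+t(y-x))$ is an affine function of $t$, and differentiating twice gives $(y-x)^{\top}\nabla^2 g(x+t(y-x))(y-x)=0$ for every $t\in[0,1]$. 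Since $g$ is convex and $C^2$, $\nabla^2 g$ is positive semidefinite everywhere, and the standard fact that $u^{\top}Hu=0$ implies $Hu=0$ for PSD $H$ (via an $H^{1/2}$ factorization) then yields $\nabla^2 g(x+t(y-x))(y-x)=0$ for all $t\in[0,1]$. Consequently $\frac{d}{dt}\nabla g(x+t(y-x))=0$, so $\nabla g$ is constant on the segment $[x,y]$, and in particular $\nabla g(x)=\nabla g(y)$.

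For the second claim, set $v:=\nabla g(x)$ for some $x\in\cX$ and prove both inclusions. Any $y\in\cX$ satisfies $\nabla g(y)=v$ by the first part, and the subdifferential sum rule for a $C^1$-plus-convex sum gives $0\in\partial\psi(y)=\nabla g(y)+\partial\vp(y)$, which rearranges to $-v\in\partial\vp(y)$, i.e., $y\in\partial\vp^{-1}(-v)$. Conversely, any $y$ with $\nabla g(y)=v$ and $-v\in\partial\vp(y)$ satisfies $0=v+(-v)\in\nabla g(y)+\partial\vp(y)=\partial\psi(y)$, and since $\psi$ is convex, this is equivalent to $y\in\argmin\psi=\cX$.

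The main obstacle, while conceptually routine, is the step $(y-x)^{\top}H(y-x)=0\Rightarrow H(y-x)=0$ for PSD $H$; this is precisely where the convexity of $g$ (and not merely $g\in C^2(\R^n)$) becomes indispensable in going from ``$g$ is affine on the segment'' to ``$\nabla g$ is constant on the segment''. Everything else amounts to bookkeeping with the convex subdifferential sum rule.
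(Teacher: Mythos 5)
Your proof is correct, but it takes a genuinely different route from the paper's for the first claim. You exploit convexity directly: since $\cX$ is a face of the epigraph, both $g$ and $\vp$ are affine on the segment $[x,y]$, so $(y-x)^{\top}\nabla^2 g(\cdot)(y-x)\equiv 0$ there, and positive semidefiniteness of the Hessian upgrades this to $\nabla^2 g(\cdot)(y-x)\equiv 0$, whence $\nabla g$ is constant on $[x,y]$. The paper instead follows Nesterov's cocoercivity-style argument (which it attributes to \cite[Theorem~2.1.5]{nesterov2018lectures}): it introduces $\phi(z)=g(z)-\langle\nabla g(x),z\rangle$, observes that $x$ globally minimizes $\phi$ (by convexity of $g$), applies the descent lemma with a local Lipschitz modulus $L$ of $\nabla g$ at the auxiliary point $y-\tfrac{1}{L}\nabla\phi(y)$, and adds the resulting inequality to the subgradient inequality for $\vp$ at $x$ to obtain the quantitative estimate $\psi(x)+\tfrac{1}{2L}\|\nabla g(x)-\nabla g(y)\|^2\le\psi(y)$, which forces $\nabla g(x)=\nabla g(y)$ since $\psi(x)=\psi(y)$. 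Your argument is more elementary and makes the geometry transparent (the Hessian's kernel contains tangent directions of $\cX$), but it genuinely uses second derivatives; the paper's argument only needs $\nabla g$ to be locally Lipschitz, so it would survive a weakening of the hypothesis to $g\in C^{1,1}_{\rm loc}$, and it also yields a quantitative gradient bound that is useful elsewhere. Your treatment of the second claim via the convex sum rule $\partial\psi=\nabla g+\partial\vp$ (justified because $\dom g=\R^n$) matches the paper's.
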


We next present the strict saddle property of $F$ in \eqref{relaxprob1}. \revise{If the strict saddle property holds, then all stationary points are either local minimizers or strict saddle points. Additionally, it has been shown that gradient method with random initialization can provably avoid strict saddle points \cite{lee2019first,poon2023smooth} and converge to second-order stationary points under mild additional assumptions (and hence to local minimizers when the strict saddle property holds).}

When $h$ is convex quadratic, the function $(u,v)\mapsto h(u\circ v)$ was shown to have strict saddle property in \cite[Lemma 1]{zhao2022high}, and $F$ can be shown to have the strict saddle property based on arguments in \cite[Appendix~C]{poon2021smooth}. Our result here covers all convex $h$.

\begin{prop}[Strict saddle property]\label{prop1-3}
Let $f$ and $F$ be defined in \eqref{l1prob} and \eqref{relaxprob1} respectively, and suppose that \revise{$h$} in \eqref{l1prob} is convex. Then \revise{there exists $\delta>0$ } such that for all $(a,b)\in\R^n\times \R^n$, the following statements are equivalent:
    \begin{enumerate}[\rm(i)]
         \item The point $(a,b)$ is a stationary point of $F$ and $\lambda_{\min}(\nabla^2F(a,b))>-\delta$.
        \item The point $a^2 - b^2$ solves \eqref{l1prob}, and $\min\{a^2,b^2\}=0$.
        \item The point $(a,b)$ solves \eqref{relaxprob1}.
        \item The point $(a,b)$ is a second-order stationary point of $F$.
    \end{enumerate}
\end{prop}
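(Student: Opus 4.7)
The plan is to establish the four-way equivalence by first handling the ``easy'' cycle (ii) $\Leftrightarrow$ (iii) $\Leftrightarrow$ (iv) without reference to $\delta$, and then separately producing a uniform $\delta > 0$ that yields (i) $\Leftrightarrow$ (iv). For the easy cycle: since $h$ is convex, so is $f = h + \mu\|\cdot\|_1$, whence local and global minimizers of $f$ coincide. Combined with \cref{localmin_charac}, this gives (ii) $\Leftrightarrow$ (iii) after noting that the AM-GM bound $F(a,b) \ge f(a^2 - b^2)$ is tight when $\min\{a^2, b^2\} = 0$, so every local minimizer of $F$ with this property is in fact global. For (iv) $\Rightarrow$ (ii), I would apply \cref{prop2ndorder}: convexity of $h$ makes the requirement $\nabla^2 h(s)_{II} \succeq 0$ automatic, so $s = a^2 - b^2$ is stationary (hence globally minimizing by convexity) for $f$, with $\min\{a^2, b^2\} = 0$. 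The implications (iii) $\Rightarrow$ (iv) and (iv) $\Rightarrow$ (i) (for any $\delta > 0$) are immediate, so the essential task is constructing $\delta$ with (i) $\Rightarrow$ (iv).

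To produce such a $\delta$, I would stratify the stationary set of $F$ by sign pattern. Given a stationary $(a, b)$ with $s = a^2 - b^2$, \cref{prop1-2} together with \cref{remarkab}(i) partitions $[n]$ into $I = \{i: s_i > 0\}$, $J = \{i: s_i < 0\}$, and $K = \{i: s_i = 0\}$, and forces $\nabla h(s)_i = -\mu$ for $i \in I$ and $\nabla h(s)_i = \mu$ for $i \in J$. For each pair of disjoint subsets $I, J \subseteq [n]$, define
\[
  \mathcal{S}_{I, J} := \bigl\{s \in \R^n :\ s_K = 0,\ s_I > 0,\ s_J < 0,\ \nabla h(s)_i = -\mu\ \forall i \in I,\ \nabla h(s)_j = \mu\ \forall j \in J\bigr\},
\]
with $K = [n]\setminus(I\cup J)$. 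The crucial step is that $\nabla h$ is \emph{constant} on each nonempty $\mathcal{S}_{I, J}$. For $s, s' \in \mathcal{S}_{I, J}$, the vector $\nabla h(s) - \nabla h(s')$ vanishes on $I \cup J$ and $s - s'$ vanishes on $K$, so $\langle \nabla h(s) - \nabla h(s'),\ s - s'\rangle = 0$. Monotonicity of $\nabla h$ coming from convexity of $h$ then forces $h$ to be affine on the segment $[s, s']$; combined with $h \in C^2(\R^n)$ and $\nabla^2 h \succeq 0$, the identity $(s - s')^\top \nabla^2 h(\cdot)(s - s') \equiv 0$ along the segment upgrades to $\nabla^2 h(\cdot)(s - s') \equiv 0$, so $\nabla h$ is constant on $[s, s']$. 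The strict sign constraints propagate to convex combinations, and the level-set constraints on $\nabla h$ are preserved by the constancy just shown, so $[s, s'] \subseteq \mathcal{S}_{I, J}$; hence $\mathcal{S}_{I, J}$ is convex and carries a single gradient value $v^{I, J}$.

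With this structure in hand, I would define
\[
  \delta := \min\bigl\{\,2(|v^{I, J}_i| - \mu)\ :\ \mathcal{S}_{I, J} \neq \emptyset,\ i \in K,\ |v^{I, J}_i| > \mu\,\bigr\},
\]
taking any positive placeholder if this minimand happens to be empty (in which case every stationary point of $F$ is already second-order stationary and any $\delta > 0$ suffices). Since $(I, J)$ ranges over a finite collection and each term inside the min is strictly positive, $\delta > 0$. At a stationary $(a, b)$ failing to be second-order stationary, the associated $s$ is not stationary for $f$, so some $i \in K$ has $|v^{I, J}_i| > \mu$; since $a_i = b_i = 0$ there, evaluating \eqref{hessianF} against the standard basis vector in the appropriate $a_i$- or $b_i$-slot produces an eigenvalue of $\nabla^2 F(a,b)$ at most $-2(|v^{I, J}_i| - \mu) \le -\delta$, giving $\lambda_{\min}(\nabla^2 F(a, b)) \le -\delta$. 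This proves (i) $\Rightarrow$ (iv). The main obstacle will be the constancy of $\nabla h$ on each $\mathcal{S}_{I, J}$: the nontrivial step is converting $\langle \nabla h(s) - \nabla h(s'),\ s - s'\rangle = 0$ into $\nabla h$ being truly constant along the segment, which uses the $C^2$ and convexity hypotheses together via the Hessian-kernel identity.
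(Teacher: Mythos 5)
Your proof is correct, and the overall architecture matches the paper's: dispose of (ii)$\Leftrightarrow$(iii)$\Leftrightarrow$(iv) via convexity, \cref{localmin_charac} and \cref{prop2ndorder}, then obtain a uniform $\delta$ from the fact that $\nabla h$ can take only finitely many values across the ``bad'' stationary points, and exhibit the negative eigenvalue through a diagonal entry of \eqref{hessianF} at a coordinate with $a_i=b_i=0$. Where you genuinely diverge is in how the finiteness is established. The paper introduces the auxiliary penalized functions $f_I=f+\mu\|x_I\|_1$, invokes \cref{prop2-2} to get a single gradient value $v^I$ on each $\argmin f_I$, and needs a case split at the threshold $|\nabla h(s)_i|>2\mu$ (handled directly, giving $\lambda_{\min}\le -2\mu$) versus $\le 2\mu$ (which is exactly what places $s$ in $\argmin f_{\tilde I}$). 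You instead stratify by sign pattern, define $\mathcal{S}_{I,J}$ by the sign and gradient-level constraints that \cref{prop1-2} forces at stationary points, and prove constancy of $\nabla h$ on each stratum from scratch: the disjoint-support identity $\langle\nabla h(s)-\nabla h(s'),s-s'\rangle=0$ plus monotonicity gives $(s-s')^\top\nabla^2 h(\cdot)(s-s')\equiv 0$ along the segment, and the PSD-kernel fact $x^\top Mx=0\Rightarrow Mx=0$ upgrades this to $\nabla h$ constant. This is a clean substitute for the paper's descent-lemma proof of \cref{prop2-2}; it avoids the auxiliary functions and the $2\mu$ case split entirely, and your coordinate-wise $\delta$ is slightly sharper than the paper's $2\epsilon/\sqrt{n}$. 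What the paper's route buys is a reusable lemma stated for general convex composites (and a proof needing only local Lipschitzness of $\nabla g$ rather than the full $C^2$ Hessian argument), but for this proposition both routes are equally valid.
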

\begin{rem}
    The $\delta$ in Proposition~\ref{prop1-3}(i) can be chosen as in \eqref{epsdelta}, which does not depend on the choice of $(a,b)$. \revise{\cref{prop1-3} shows that any stationary point of $F$ in \cref{relaxprob1} is either a global minimizer or a strict saddle point with the smallest eigenvalue of the Hessian being \emph{uniformly} bounded away from $0$; this uniformity was also utilized to prove the complexity of certain perturbed gradient methods in \cite{daneshmand2018escaping,jin2017escape}.}
\end{rem}
\begin{proof}
    For each $I\subseteq [n]$, we define a new function as follows:
    \begin{equation}\label{fI}
    f_I(x) := f(x)+\mu\|x_I\|_1=h(x)+\mu\|x\|_1+\mu\|x_I\|_1.
    \end{equation}
    By applying \cref{prop2-2}, we obtain that for all $x\in\Omega_I:=\argmin_{x\in\R^n}f_I(x)$, the gradient $\nabla h(x)$ remains the same. Specifically, we denote $v^I=\nabla h(x)$ if $\Omega_I$ is nonempty for some $x\in \Omega_I$. Additionally, we set
    \begin{equation}\label{epsdelta}
     \epsilon=\min\{\dist(v^I,[-\mu,\mu]^n):\ I \in {\frak I}\} \ \ {\rm and } \ \  \delta=\min\left\{2\mu,\frac{2\epsilon}{\sqrt{n}}\right\},
    \end{equation}
    where ${\frak I}:= \{I\subseteq [n]: \ \Omega_I\neq\emptyset,~\dist(v^I,[-\mu,\mu]^n)> 0\}$.

    (i)$\Rightarrow$(ii): The condition $\min\{a^2,b^2\} = 0$ follows from \cref{remarkab}(i). Let us define $s=a^2-b^2$. Since $h$ is convex, to show that $s$ solves \eqref{l1prob}, it suffices to show that $-\nabla h(s)\in \mu\partial\|s\|_1$.

    To this end, \revise{we first consider the indices} with $s_i \neq 0$ and proceed with the following two cases:
   \begin{enumerate}[(1)]
        \item $s_i>0$. The conditions $a_i^2 - b_i^2 > 0$ and $\min\{a_i^2,b_i^2\} = 0$ imply that $a_i\neq 0$ and $b_i=0$. From \eqref{stationary} and $\nabla F(a,b) = 0$, we have $2a_i(\nabla h(s)_i+\mu)=0$, which leads to the conclusion that $-\nabla h(s)_i=\mu\in\mu\partial |s_i|$.

         \item $s_i<0$. The proof is similar to case (1).
  \end{enumerate}
  In summary, we have shown that for all $i\in [n]$ with $s_i\neq 0$, we have $-\nabla h(s)_i\in \mu\partial|s_i|$.

  Now it remains \revise{to consider indices $i\in [n]$} with $s_i = 0$. Note that $s_i=0$ is equivalent to $a_i=b_i=0$, as $\min\{a_i^2,b_i^2\}=0$. Next, define
  \begin{equation}\label{tildeI}
  \tilde{I} = \{i\in[n]:\; a_i=b_i=0,\ \nabla h(s)_i \notin [-\mu,\mu]\}.
  \end{equation}
  If $\tilde{I} = \emptyset$, then we can conclude that $s$ solves \eqref{l1prob}. Thus, let's assume to the contrary that $\tilde{I} \neq \emptyset$. We will consider the following two cases:
   \begin{enumerate}[(1)]
        \item Suppose that there exists some $i\in \tilde{I}$ such that $|\nabla h(s)|_i > 2\mu$. We first consider the case where $\nabla h(s)_i< - 2\mu$.
            We define $a(t)_j=a_j$ and $b(t)_j=b_j$ for $j\neq i$, and $a(t)_i=t$ and $b(t)_i=b_i=0$.
          Then, we have $a(t)^2-b(t)^2-(a^2-b^2)=t^2e_{i}$, where $e_i\in \R^n$ is the vector whose $i$th entry is $1$ and is $0$ otherwise, and for all $t>0$, the following holds:
         \begin{align*}
             & F(a(t),b(t))-F(a,b)=h(a(t)^2-b(t)^2)-h(a^2-b^2)+\mu t^2 \\
             &=\langle \nabla h(a^2-b^2), a(t)^2-b(t)^2-a^2+b^2\rangle+O(t^4)+\mu t^2 \\
             &=t^2(\nabla h(a^2-b^2)_i+\mu)+O(t^4) \leq -\mu t^2+O(t^4).
         \end{align*}
         Since $\nabla F(a,b)=0$, this implies  $\frac{1}{2}\begin{bmatrix}e_i^\top &0\end{bmatrix}\nabla^2F(a,b)\begin{bmatrix}
             e_i \\
             0
         \end{bmatrix}\leq -\mu$,  and consequently, we have $\lambda_{\min}(\nabla^2F(a,b))\leq -2\mu\leq -\delta$. This contradicts the assumption that $\lambda_{\min}(\nabla^2F(a,b))>-\delta$.

         The case where $\nabla h(s)_i>2\mu$ can be proved similarly by defining $a(t)_j=a_j$ and $b(t)_j=b_j$ for $j\neq i$, and $a(t)_i=a_i=0$ and $b(t)_i=t$.

        \item For all $i\in \tilde{I}$, it holds that $|\nabla h(s)|_i\leq 2\mu$. In this case, we can verify that $s$ minimizes the $f_{\tilde{I}}$ (i.e., the $f_I $ in \eqref{fI} with $I = \tilde{I}$) by examining the first-order optimality conditions. Furthermore, we know that $\dist(\nabla h(s),[-\mu,\mu]^n)>0$ due to the assumption that $\tilde{I} \neq \emptyset$. Hence, by the definition of $\epsilon$, we obtain that
        \[
        \textstyle\epsilon \le \dist(\nabla h(s),[-\mu,\mu]^n) = \sqrt{\sum_{i \in \tilde{I}}(|\nabla h(s)_i|-\mu)^2} \leq \sqrt{n}\left(\max_{i\in \tilde{I}} |\nabla h(s)_i|-\mu\right).
        \]
       Let us select $\widehat i\in\argmax_{i\in \tilde{I}} |\nabla h(s)_i|$. By employing a similar argument as in case (1),\footnote{Specifically, we define $a(t)_j=a_j$ and $b(t)_j=b_j$ for $j\neq \widehat i$, and set $a(t)_{\widehat i}=t$ and $b(t)_{\widehat i}=0$ if $\nabla h(s)_{\widehat i}  < 0$ and set $a(t)_{\widehat i}=0$ and $b(t)_{\widehat i}=t$ otherwise.} we can show that $\lambda_{\min}(\nabla^2F(a,b))\leq -2(|\nabla h(s)_{\widehat i}|-\mu)\leq -\delta$, which contradicts the assumption that $\lambda_{\min}(\nabla^2F(a,b))>-\delta$.
\end{enumerate}
Therefore, the $\tilde{I}$ in \eqref{tildeI} is empty and we must have that $a^2-b^2$ solves \eqref{l1prob}. This proves (i)$\Rightarrow$(ii). Next, by a similar argument in the proof of \cite[Theorem 2]{ziyin2023spred}, we also have (ii)$\Rightarrow$(iii). Finally, the implication (iii)$\Rightarrow$(iv)$\Rightarrow$(i) is clear.
\end{proof}

\subsection{KL exponent of $F$ at 2nd-order stationary points}\label{subs:uncontrainKLregular}
In this subsection, we study the KL exponent of $F$ in \eqref{relaxprob1} at second-order stationary points. Note that these are global minimizers when $h$ is convex according to \cref{prop1-3}. We will discuss the implications of these exponents on convergence rate in \cref{sec4}.

We start with an auxiliary result that characterizes the KL exponent of a class of functions that can be represented as a sum of a $C^1$ function and a polyhedral function. \revise{The proof of \cref{prop3-10} can be found in \cref{appendixB}.}

\begin{thm}[KL exponent via critical cone]\label{prop3-10}
    Assume that $g\in C^1(\R^n)$ and $\vp:\R^n \to \R\cup\{\infty\}$ is a proper polyhedral function. Let $\bar x$ be a stationary point of $\psi: = g + \vp$ and define
    \begin{align*}
      &K := N_{\partial \vp(\bar x)}(-\nabla g(\bar x)),\ \ \ \tilde \psi(\cdot): = g(\cdot) - \langle \nabla g(\bar x),\cdot - \bar x \rangle + \iota_K(\cdot-\bar x),\\
      &\Omega: = \{x\in\R^n:\ \psi(x) = \psi(\bar x)\},\ \ \  \tilde \Omega: = \{x\in\R^n: \ \tilde \psi(x)=\tilde \psi(\bar x)\}.
    \end{align*}
    Then the following statements hold.
    \begin{enumerate}[\rm (i)]
        \item There exists a neighborhood $U$ of $(\bar x,0)$ such that $U\cap \gph\partial \psi=U\cap \gph\partial\tilde\psi$, and for each $(x,v)\in U\cap \gph\partial \psi=U\cap  \gph\partial\tilde\psi$, it holds that $\psi(x)-\psi(\bar x)=\tilde \psi(x)-\tilde \psi(\bar x)$.
        \item The function $\psi$ satisfies the KL property at $\bar x$ with exponent $\alpha\in (0,1)$ if and only if $\tilde\psi$ satisfies the KL property at $\bar x$ with exponent $\alpha\in (0,1)$.
        \item If $\bar x$ is a local minimizer of $\psi$, then $\bar x$ is also a local minimizer of $\tilde\psi$, and the set $\Omega$ locally coincides with the set $\tilde\Omega$.
    \end{enumerate}
\end{thm}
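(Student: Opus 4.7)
The plan is to prove part (i) as a local reduction lemma for the subdifferential of $\vp$, and then deduce parts (ii) and (iii) as corollaries.

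Set $\bar v := -\nabla g(\bar x)$, which lies in $\partial\vp(\bar x)$ by the stationarity of $\bar x$, and introduce $\widetilde\vp(x) := \vp(\bar x) + \langle \bar v,\,x - \bar x\rangle + \iota_K(x-\bar x)$, so that $\partial\widetilde\vp(x) = \bar v + N_K(x-\bar x)$. The core step is a local reduction: there is a neighborhood $U_0$ of $(\bar x,\bar v)$ with $U_0\cap\gph\partial\vp = U_0\cap\gph\partial\widetilde\vp$, and $\vp(x) = \widetilde\vp(x)$ on this common graph. Since $\vp$ is polyhedral, $\gph\partial\vp$ is a polyhedral subset of $\R^n\times\R^n$, and any polyhedron coincides locally with the translate of its tangent cone at any of its points; it therefore suffices to identify $T_{\gph\partial\vp}(\bar x,\bar v) = \gph N_K$. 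This identification follows from the local formula $\vp(x) = \vp(\bar x) + \sigma_{\partial\vp(\bar x)}(x-\bar x)$ valid near $\bar x$ (by piecewise linearity of $\vp$) together with the elementary fact that $\sigma_{\partial\vp(\bar x)}(d) = \langle \bar v,d\rangle$ iff $d\in K$. Part (i) then follows by applying the $C^1$-smooth homeomorphism $S:(x,v)\mapsto (x,v-\nabla g(x))$, which maps $\gph\partial\psi$ onto $\gph\partial\vp$, $\gph\partial\widetilde\psi$ onto $\gph\partial\widetilde\vp$, and $(\bar x,0)$ to $(\bar x,\bar v)$; one takes $U := S^{-1}(U_0)$. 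On the common graph $\vp(x)-\vp(\bar x) = \langle\bar v,x-\bar x\rangle = -\langle\nabla g(\bar x),x-\bar x\rangle$, so $\psi(x)-\psi(\bar x) = g(x) - g(\bar x) - \langle\nabla g(\bar x),x-\bar x\rangle = \widetilde\psi(x) - \widetilde\psi(\bar x)$.

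Part (ii) would follow from part (i) combined with \cref{rem2-3}: the remark shows it is enough to verify \eqref{KLexponentineq} on pairs with $0 < \psi(x)-\psi(\bar x) < a$ and $\dist(0,\partial\psi(x)) < a$ for some small $a > 0$. After shrinking $a$ and the neighborhood of $\bar x$ so that the relevant closest subgradients pair with their $x$ inside $U$, part (i) yields $\dist(0,\partial\psi(x)) = \dist(0,\partial\widetilde\psi(x))$ and $\psi(x)-\psi(\bar x) = \widetilde\psi(x)-\widetilde\psi(\bar x)$, and the two KL inequalities become identical in either direction.

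For part (iii), let $V$ be an open neighborhood of $\bar x$ on which $\psi\ge\psi(\bar x)$, further shrunk so that $V\times\{0\}\subseteq U$. On $V\cap(\bar x+K)$ the local polyhedral identity $\vp(x) = \vp(\bar x) + \langle\bar v,x-\bar x\rangle$ gives $\widetilde\psi(x)-\widetilde\psi(\bar x) = \psi(x)-\psi(\bar x)\ge 0$, while $\widetilde\psi = \infty$ off $\bar x + K$; hence $\bar x$ is a local minimizer of $\widetilde\psi$. For the inclusion $\Omega\cap V \subseteq \widetilde\Omega$, any $x\in V\cap\Omega$ satisfies $\psi\ge\psi(x)$ on the open set $V$, so Fermat's rule gives $0\in\partial\psi(x)$; by part (i), $0\in\partial\widetilde\psi(x) = \nabla g(x) - \nabla g(\bar x) + N_K(x-\bar x)$, which forces $x-\bar x\in K$, and then the function-value identity yields $\widetilde\psi(x) = \widetilde\psi(\bar x)$. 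The reverse inclusion $\widetilde\Omega\cap V \subseteq \Omega\cap V$ is immediate from the identity. The main technical obstacle is the tangent-cone identification $T_{\gph\partial\vp}(\bar x,\bar v) = \gph N_K$: although standard for polyhedral convex functions, its verification requires careful bookkeeping of the exposed-face structure of $\partial\vp(\bar x)$ in each direction $x-\bar x$ and the position of $\bar v$ within that face.
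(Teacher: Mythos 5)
Your proposal follows essentially the same architecture as the paper's proof: (a) a local graphical reduction identifying $\gph\partial\vp$ near $(\bar x,-\nabla g(\bar x))$ with an affine shift of $\gph N_K$, (b) the function-value identity on the common graph via $0\in\partial\vp(x)\cap\partial\vp(\bar x)$ and convexity, (c) transfer to $\psi$ and $\tilde\psi$ through the map $(x,v)\mapsto(x,v-\nabla g(x))$, and then (ii) from \cref{rem2-3} and (iii) from Fermat's rule plus the value identity; your parts (ii) and (iii) are correct as written (your local-minimality argument in (iii), via the identity $\vp(x)=\vp(\bar x)+\langle\bar v,x-\bar x\rangle$ on $(\bar x+K)\cap V$, is if anything a bit more direct than the paper's). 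The one substantive difference is how you discharge the crux in (a): you invoke polyhedrality of $\gph\partial\vp$ together with the identification $T_{\gph\partial\vp}(\bar x,\bar v)=\gph N_K$, which is exactly Robinson's reduction lemma for polyhedral convex functions --- a true statement, but the only nontrivial step of the whole theorem, and you yourself flag its verification (the exposed-face bookkeeping) as an obstacle you do not carry out. The paper instead derives it by a conjugate-duality chain: locally $\vp=\vp(\bar x)+\sigma_C(\cdot-\bar x)$ with $C=\partial\vp(\bar x)$; the polyhedral set $C$ agrees with $\bar v+T_C(\bar v)$ near $\bar v$, so $\gph\partial\iota_C$ and $\gph\partial\iota_{T_C(\bar v)}$ agree locally; Rockafellar's Theorem~23.5 reflects this to the graphs of the subdifferentials of the conjugates $\sigma_C$ and $\sigma_{T_C(\bar v)}=\iota_{N_C(\bar v)}=\iota_K$. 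If you want a self-contained argument rather than a citation, that conjugacy route is the clean way to finish your tangent-cone identification; otherwise your proof is complete modulo quoting the reduction lemma.
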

\begin{rem}[On the set $K$]
    The set $K$ in \cref{prop3-10} is actually the critical cone of $\psi$ at $\bar x$ \cite[Eq.~(3.220)]{bonnans2013perturbation}; indeed, it can be alternatively written as
    \[
    K \overset{\rm (a)}= N_{\partial\psi(\bar x)}(0) \overset{\rm (b)}=\{v\in\R^n:~\sigma_{\partial\psi(\bar x)}(v)=0\}
    \overset{\rm (c)} = \{v\in\R^n:~d\psi(\bar x)(v)=0\},
    \]
    where (a) follows from the fact that $\partial \psi = \nabla g + \partial \vp$ (see \cite[Exercise~8.8(c)]{rockafellar2009variational}; in particular, $\partial \psi(\bar x)$ is a closed convex set), (b) follows from \cite[Example~11.4]{rockafellar2009variational}, (c) follows from \cite[Theorem~8.30]{rockafellar2009variational} and the regularity of $\psi$ deduced from \cite[Corollary~8.19,~Exercise~8.20(b)~and~Proposition~8.21]{rockafellar2009variational}, and \revise{the notation $d\psi(\bar x)(w):=\liminf_{t\downarrow 0,\tilde w\to w}\frac{\psi(\bar x+t\tilde w)-\psi(\bar x)}{t}$ denotes the subderivative of $\psi$ at $\bar x$ for $w$.} Therefore, roughly speaking, according to \cref{prop3-10}, to calculate the KL exponent of $\psi$, we only need to check its behavior on the critical cone.
\end{rem}

Before presenting our results on KL exponents concerning $f$ and $F$ in, respectively, \eqref{l1prob} and \eqref{relaxprob1}, we first derive some useful reformulations of $F$ and $\|\nabla F\|$. Specifically, let $s^*$ be a stationary point of $f$ in \eqref{l1prob}, and define $\tilde h(\cdot)=h(\cdot)-\langle \nabla h(s^*),\cdot - s^*\rangle$. Then, for any $(a,b)\in \R^n\times \R^n$, we can rewrite $F$ in \eqref{relaxprob1} as\vspace{-0.1 cm}
\begin{align}\label{eq2-6}\vspace{-0.1 cm}
        % \begin{aligned}
               &F(a,b) =h(a^2-b^2)+\mu\sum_{i=1}^n(a_i^2+b_i^2)\notag\\
               &=h(a^2-b^2)-\langle \nabla h(s^*),a^2-b^2\rangle+\!\sum_{i=1}^n\! \left[(\mu+\nabla h(s^*)_i)a_i^2+(\mu-\nabla h(s^*)_i)b_i^2\right] \notag\\
               &=\tilde h(a^2-b^2) - \langle\nabla h(s^*),s^*\rangle +\sum_{i=1}^n \left[(\mu+\nabla h(s^*)_i)a_i^2+(\mu-\nabla h(s^*)_i)b_i^2\right].
     %    \end{aligned}
\end{align}
Next, let $s=a^2-b^2$ and compute $\nabla F(a,b)$ based on \eqref{eq2-6}, we see that if we define
  \begin{equation}\label{eqgradientF}
  v_i=\begin{bmatrix}
      a_i\nabla \tilde h(s)_i+(\mu+\nabla h(s^*)_i)a_i \\
      -b_i\nabla \tilde h(s)_i+(\mu-\nabla h(s^*)_i)b_i
  \end{bmatrix}\ \ \ \forall i\in [n],
\end{equation}
then the first and second entries of $2v_i$ correspond to the $i$th and $(n+i)$th entries of $\nabla F(a,b)$ (i.e., $\frac{\partial F}{\partial a_i}$ and $\frac{\partial F}{\partial b_i}$), respectively. Also, we have $\|\nabla F(a,b)\|^2 = 4\sum_{i=1}^n \|v_i\|^2$, from which we deduce the existence of $\delta_0>0$ (one can choose $\delta_0 = 2/\sqrt{n}$) such that
\begin{equation}\label{eq2-61}
      \begin{aligned}
     \textstyle
     \|\nabla F(a,b)\| \geq \delta_0\sum_{i=1}^n\|v_i\|.
      \end{aligned}
\end{equation}

\subsubsection{KL exponent under strict complementarity}
The following lemma is an immediate corollary of \cref{prop3-10}. Here, we assume that $-\nabla h(\bar x)\in \ri(\mu\partial \|\bar x\|_1)$, which is typically referred to as the strict complementarity condition for the $f$ in \eqref{l1prob} at the stationary point $\bar x$.
\begin{lem}\label{coro3-11}
    Let $f$ be defined in \eqref{l1prob}, $\alpha \in (0,1)$ and $\bar x$ be a stationary point of $f$ with $-\nabla h(\bar x)\in \ri(\mu\partial \|\bar x\|_1)$. Let $\tilde h(\cdot)=h(\cdot)-\langle \nabla h(\bar x),\cdot - \bar x\rangle$ and $I=\{i\in [n]:~\bar x_i\neq 0\}$. Then $f$ satisfies the KL property at $\bar x$ with exponent $\alpha$ if and only if  there exist $c,r>0$ such that for all $x$ with $\|x-\bar x\|<r$, $0<\tilde h(x)-\tilde h(\bar x)<r$ and $x_{I^c}=\bar x_{I^c}$, it holds that $\|\nabla\tilde h(x)_I\|\geq c(\tilde h(x)-\tilde h(\bar x))^\alpha$.
\end{lem}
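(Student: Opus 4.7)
The plan is to invoke \cref{prop3-10} with $g = h\in C^2(\R^n)\subseteq C^1(\R^n)$ and $\vp = \mu\|\cdot\|_1$ (which is polyhedral). Since $\bar x$ is a stationary point of $\psi:=f = g+\vp$ by hypothesis, the lemma will follow once the reduced KL condition furnished by \cref{prop3-10}(ii) is translated into the error-bound form stated in the lemma.

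The first step is to identify the critical cone $K = N_{\partial\vp(\bar x)}(-\nabla g(\bar x))$. Using the explicit description
\[
\mu\,\partial\|\bar x\|_1 = \{v\in\R^n:\ v_i = \mu\,\mathrm{sgn}(\bar x_i)\ \forall i\in I,\ v_i\in[-\mu,\mu]\ \forall i\in I^c\},
\]
one reads off that its affine hull is parallel to the coordinate subspace $\{v:\ v_I = 0\}$. The strict complementarity hypothesis places $-\nabla h(\bar x)$ in the relative interior of this polyhedron, so the normal cone at that point equals the orthogonal complement of the above parallel subspace, giving $K = \{w\in\R^n:\ w_{I^c} = 0\}$.

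Next, I would spell out $\tilde\psi$ from \cref{prop3-10}. Since $\bar x_{I^c} = 0$ by the definition of $I$, the translate $\bar x + K$ coincides with the affine subspace $\{x:\ x_{I^c} = \bar x_{I^c}\}$, and on this subspace $\tilde\psi$ agrees with $\tilde h$. The sum rule for subdifferentials then gives
\[
\partial\tilde\psi(x) = \nabla\tilde h(x) + N_{\bar x + K}(x) = \nabla\tilde h(x) + \{w\in\R^n:\ w_I = 0\}
\]
for every $x\in \bar x + K$, so that $\dist(0,\partial\tilde\psi(x)) = \|\nabla\tilde h(x)_I\|$, while $\tilde\psi(\bar x) = \tilde h(\bar x)$. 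Outside $\bar x + K$ the function $\tilde\psi$ is $+\infty$ and $\partial\tilde\psi$ is empty, so the slice $\{x:\ x_{I^c} = \bar x_{I^c}\}$ is the only place where the KL property of $\tilde\psi$ has any content.

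By \cref{prop3-10}(ii), $f$ satisfies the KL property at $\bar x$ with exponent $\alpha$ if and only if $\tilde\psi$ does. Writing the KL property of $\tilde\psi$ in the error-bound form of \cref{rem2-3} and substituting the two displays above, one recovers precisely the stated inequality after a possible rescaling of constants; \cref{rem2-3} also removes the need to impose a separate $\dist(0,\partial\tilde\psi)<a$ restriction. The only step requiring actual variational computation is the critical-cone identification under strict complementarity; once that is in place, the remainder is a direct specialization of \cref{prop3-10} and \cref{rem2-3}, and I do not anticipate a genuine obstacle.
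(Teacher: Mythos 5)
Your proposal is correct and follows essentially the same route as the paper: invoke \cref{prop3-10} with $g=h$, $\vp=\mu\|\cdot\|_1$, use strict complementarity to identify the critical cone $K=\{w:w_{I^c}=0\}$, and then observe that the KL property of $\tilde\psi=\tilde h+\iota_K(\cdot-\bar x)$ at $\bar x$ unwinds to exactly the stated inequality on the slice $x_{I^c}=\bar x_{I^c}$. The only difference is that you spell out the subdifferential computation $\dist(0,\partial\tilde\psi(x))=\|\nabla\tilde h(x)_I\|$ and the appeal to \cref{rem2-3} more explicitly than the paper, which simply asserts this translation is ``just the definition.''
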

\begin{proof}
    Let $\vp(\cdot)=\mu\|\cdot\|_1$. We deduce from $-\nabla h(\bar x)\in \ri(\mu\partial \|\bar x\|_1)=\ri(\partial\vp(\bar x))$ that
    \[
    K:=N_{\partial \vp(\bar x)}(-\nabla h(\bar x))=[\aff(\partial \vp(\bar x)+\nabla h(\bar x))]^\perp=\{v\in \R^n:v_{I^c}=0\}.
    \]
    By \cref{prop3-10}, $f$ satisfies the KL property at $\bar x$ with exponent $\alpha$ if and only if $\tilde h(\cdot) + \iota_K(\cdot -\bar x)$ does so. The rest of the statement is just the definition for the KL property of $\tilde h(\cdot) + \iota_K(\cdot -\bar x)$ \revise{at $\bar x$} with exponent $\alpha$.
\end{proof}

We are now ready to present our main result in this subsection.

\begin{thm}[KL exponent of $F$ under strict complementarity condition]\label{thm2-1}
\revise{Let $(a^*,b^*)$ be a second-order stationary point of $F$ in \eqref{relaxprob1}, and let $s^*=(a^*)^2-(b^*)^2$.} Assume that the function $f$ in \eqref{l1prob} satisfies the KL property at $s^*$ with exponent $\alpha\in (0,1)$.
If $-\nabla h(s^*)\in \ri(\mu\partial\|s^*\|_1)$, then $F$ satisfies the KL property at $(a^*,b^*)$ with exponent $\max\{\alpha,\frac{1}{2}\}$.
\end{thm}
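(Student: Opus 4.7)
The plan is to exploit the reduction technique from \cref{remarkab}(ii) together with the decomposition \eqref{eq2-6} and the gradient expression \eqref{eqgradientF}--\eqref{eq2-61}. After applying the reduction, I may assume $a^* \ge 0$ and $b^* = 0$, so that $s^* = (a^*)^2$ and $I = \{i\in[n]: a^*_i > 0\} = \{i : s^*_i \neq 0\}$; the stationarity of $s^*$ for $f$ then gives $\nabla h(s^*)_i = -\mu$ for $i\in I$, while strict complementarity supplies $|\nabla h(s^*)_i| < \mu$ for $i\notin I$. With these choices the cross-quadratic terms in \eqref{eq2-6} collapse, and one checks that
\[
F(a,b) - F(a^*,0) \;=\; \bigl[\tilde h(s) - \tilde h(s^*)\bigr] + 2\mu\sum_{i\in I} b_i^2 + \sum_{i\notin I}\bigl[(\mu + \nabla h(s^*)_i)a_i^2 + (\mu - \nabla h(s^*)_i)b_i^2\bigr],
\]
where $s := a^2 - b^2$.

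The first step is to show that the two explicit quadratic sums are dominated by $\|\nabla F(a,b)\|^2$ on a neighborhood $U$ of $(a^*,0)$. Reading off \eqref{eqgradientF}, for $i\in I$ the second coordinate of $v_i$ equals $b_i(2\mu-\nabla\tilde h(s)_i)$ with $|2\mu-\nabla\tilde h(s)_i|\ge \mu$ near $s^*$, while for $i\notin I$ strict complementarity gives $\eta := \min_{i\notin I}\{\mu+\nabla h(s^*)_i,\mu-\nabla h(s^*)_i\}>0$, so the coefficients of $a_i$ and $b_i$ in $v_i$ remain bounded below by $\eta/2$ on $U$. Combined with \eqref{eq2-61}, this yields a constant $C_1>0$ such that
\[
\sum_{i\in I} b_i^2 + \sum_{i\notin I}(a_i^2+b_i^2) \;\le\; C_1 \|\nabla F(a,b)\|^2 \quad\text{on } U,
\]
so all terms in the displayed decomposition other than $\tilde h(s)-\tilde h(s^*)$ are $O(\|\nabla F(a,b)\|^2)$.

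The second step transfers the KL estimate on $\tilde h$ at $s^*$, accessed through \cref{coro3-11}, to the point $s$. Introduce $\hat s\in\R^n$ by $\hat s_I := s_I$ and $\hat s_{I^c}:=0$. Since $s-\hat s$ is supported on $I^c$ and $\nabla\tilde h(s^*)_{I^c}=0$, a first-order Taylor expansion bounds $|\tilde h(s)-\tilde h(\hat s)|$ by a constant times $\|s_{I^c}\|$, and $\|s_{I^c}\|\le \sum_{i\notin I}(a_i^2+b_i^2)\le C_1\|\nabla F(a,b)\|^2$ absorbs this difference into the $O(\|\nabla F(a,b)\|^2)$ bucket. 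When $\tilde h(\hat s)-\tilde h(s^*)\le 0$ there is nothing more to do; otherwise \cref{coro3-11} provides $\|\nabla\tilde h(\hat s)_I\|\ge c(\tilde h(\hat s)-\tilde h(s^*))^\alpha$ for $\hat s$ close enough to $s^*$. Because the first coordinate of $v_i$ for $i\in I$ reduces to $a_i\nabla\tilde h(s)_i$ with $a_i$ bounded below, \eqref{eq2-61} together with Lipschitz continuity of $\nabla\tilde h$ yields $\|\nabla\tilde h(\hat s)_I\|\le C_2\|\nabla F(a,b)\|$, and chaining everything gives
\[
F(a,b) - F(a^*,0) \;\le\; C_3\bigl(\|\nabla F(a,b)\|^2 + \|\nabla F(a,b)\|^{1/\alpha}\bigr).
\]

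The final step is bookkeeping: the exponents $2\max\{\alpha,\tfrac12\}$ and $\max\{\alpha,\tfrac12\}/\alpha$ are both at least $1$, so after shrinking $U$ so that $\|\nabla F(a,b)\|\le 1$, one obtains $(F(a,b)-F(a^*,0))^{\max\{\alpha,1/2\}}\le C_4\|\nabla F(a,b)\|$, and \cref{rem2-3} concludes the KL property at $(a^*,b^*)$ with exponent $\max\{\alpha,\tfrac12\}$. The main obstacle is the uniform quadratic lower bound on $\|\nabla F\|^2$ along the directions transverse to $I$ established in step one: this is precisely where strict complementarity is indispensable, because without it the coefficients $\mu\pm\nabla h(s^*)_i$ may vanish for some $i\notin I$, the quadratic control collapses, and one is forced into the more delicate analysis that produces the weaker exponent $(1+\beta)/2$ flagged in the introduction.
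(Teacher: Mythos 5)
Your proposal is correct and follows essentially the same route as the paper's proof: reduce to $a^*\ge 0$, $b^*=0$, lower-bound $\|\nabla F\|$ componentwise via the $v_i$ in \eqref{eqgradientF} using strict complementarity, upper-bound $F(a,b)-F(a^*,b^*)$ through the decomposition \eqref{eq2-6} and a surrogate point supported on $I$ to which \cref{coro3-11} applies, and combine the $\|\nabla F\|^2$ and $\|\nabla F\|^{1/\alpha}$ terms. The only (immaterial) difference is your surrogate $\hat s=\Pi_I(s)$ versus the paper's $\tilde s=[\Pi_I(a)]^2$; both vanish on $I^c$ and differ from $s$ by quantities already controlled by $\|\nabla F\|^2$.
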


\begin{proof}
  \revise{We first describe the main proof idea. To prove the KL property of $F$ at $(a^*,b^*)$ means we need to connect $\|\nabla F(a,b)\|$ and $F(a,b)-F(a^*,b^*)$ for appropriate $(a,b)$. In view of \eqref{eq2-61}, we calculate the lower bound of each $\|v_i\|$ for $i\in [n]$ to give a lower bound of $\|\nabla F(a,b)\|$ in \eqref{eq2-81}. Then, we provide an upper bound of $F(a,b)-F(a^*,b^*)$ as in \eqref{eqF101}. Finally, we use the KL property of $f$ and \cref{coro3-11} to connect the lower bound of $\|\nabla F(a,b)\|$ and the upper bound of $F(a,b)-F(a^*,b^*)$.
  }

  \revise{We now start our proof.} Since $(a^*,b^*)$ is a second-order stationary point of $F$, by \cref{prop2ndorder} we know that $s^*$ is a stationary point of \eqref{l1prob} with $\min\{(a^*)^2,(b^*)^2\}=0$. Following \cref{remarkab}(ii), we assume without loss of generality that $a^*\geq 0$ and $b^*=0$, and observe that the condition $-\nabla h(s^*)\in \ri(\mu\partial\|s^*\|_1)$ is preserved thanks to the definition of $P$ in \cref{remarkab}(ii).

  Now, we proceed to define the index set $I:=\{i\in [n]:~a^*_i>0\}$. Since $-\nabla h(s^*)\in\ri(\mu\partial \|s^*\|_1)$, \revise{we know that for each} $i\notin I$, it holds that $\nabla h(s^*)_i\in (-\mu,\mu)$. Furthermore, let $\tilde h(\cdot)=h(\cdot)-\langle \nabla h(s^*),\cdot - s^*\rangle$ for notational simplicity.

  From \cref{coro3-11} and the KL assumptions on $f$, we know there exists $c,r>0$ such that for all $x$ with $\|x-s^*\|<r$, $0\le \tilde h(x)-\tilde h (s^*)<r$ and $x_{I^c}= s^*_{I^c}$ it holds that
  \begin{equation}
      \label{kltildeh}
      \|\nabla\tilde h(x)_I\|\geq c(\tilde h(x)-\tilde h(s^*))^\alpha.
  \end{equation}
  In addition, since $a^*_I > 0$, $a^*_{I^c} = 0$, $b^* = 0$ and $\nabla h(s^*)_i \in (-\mu,\mu)$ for all $i\notin I$, \revise{by continuity}, we can find a bounded neighborhood $U$ of $(a^*,b^*)$ to ensure that there exists $\delta > 0$ such that for all $(a,b)\in U$, the following conditions hold:
  \begin{subequations}
  \begin{align}
        &\min_{i\in I}a_i-|b_i| \geq \delta\label{defn_U131}, \\
        &\min_{i\in I^c} \{\mu+\nabla h(s^*)_i,\mu-\nabla h(s^*)_i\}-|\nabla \tilde h(a^2 - b^2)_i|\geq\delta \label{defn_U231},  \\
       &\tilde h([\Pi_{I}(a)]^2)-\tilde h(s^*)<r,~\|[\Pi_{I}(a)]^2-s^*\|<r, \label{defn_U331}\\
         &  \|a_{I^c}\|_\infty\le 1 ,~ \|b\|_{\infty}\leq 1,~\|\nabla F(a,b)\|\leq 1, \label{defn_U431}
  \end{align}
  \end{subequations}
  where $\Pi_I(\cdot)$ is defined in \eqref{projindex}. We also let $L > 0$ denote a Lipschitz continuity modulus for
  $\tilde h$ and $\nabla \tilde h$ on the bounded set $\conv\{a^2-b^2,\ [\Pi_{I}(a)]^2: \ (a,b)\in U\}$.

  Next, we derive a lower bound for $\sum_{i=1}^n\|v_i\|$ (see \eqref{eqgradientF}), which will then be used to give a lower bound for $\|\nabla F(a,b)\|$ on $U$ via \eqref{eq2-61}. To this end, we fix any $(a,b)\in U$ and \revise{let $s = a^2 - b^2$.} We consider two cases.
  \begin{enumerate}[\rm (i):]
  \item $i\in I$. Since $I=\{i\in[n]:\ a_i^*>0\}$ and $b^* = 0$, we have $s^*_i=(a_i^*)^2>0$. Furthermore, since $-\nabla h(s^*)_i \in \mu\partial|s_i^*|$, we have $\nabla h(s^*)_i = -\mu$. We can then rewrite $v_i$ in \eqref{eqgradientF} in the form
  \[
  v_i=
  \begin{bmatrix}
  a_i\nabla\tilde h(s)_i \\
  -b_i\nabla\tilde h(s)_i + 2\mu b_i
  \end{bmatrix},
  \]
  where $s = a^2 - b^2$. Thus,
  \begin{equation*}
  \begin{aligned}
        2\|v_i\|&\geq | a_i\nabla\tilde h(s)_i|+|-b_i\nabla\tilde h(s)_i + 2\mu b_i|\geq a_i|\nabla\tilde h(s)_i|-|b_i||\nabla\tilde h(s)_i|+2\mu|b_i| \\
      &=(a_i-|b_i|)|\nabla\tilde h(s)_i|+2\mu|b_i|  \overset{\rm(a)}{\geq} \delta|\nabla\tilde h(s)_i|+2\mu|b_i|,
  \end{aligned}
  \end{equation*}
  where (a) follows from \eqref{defn_U131}.
  \item
  $i\in I^c$. Notice that $I^c=\{i\in[n]:\ a_i^*=0,\ \nabla h(s^*)_i\in(-\mu,\mu)\}$. We then see from \eqref{defn_U231} that
  \[
    2\|v_i\| \geq |a_i||\nabla \tilde h(s)_i + \mu + \nabla h(s^*)_i|+ |b_i||-\nabla \tilde h(s)_i + (\mu - \nabla h(s^*)_i)| \geq \delta(|a_i|+|b_i|).
  \]
  \end{enumerate}
Summing up the displays from the above two cases for all $i\in [n]$, we have
  \begin{equation}\label{eq2-71}
        2\sum_{i=1}^n\|v_i\|\geq \delta_1\left(\sum_{i\in I}|\nabla\tilde h(s)_i| + \sum_{i\in I^c}|a_i| + \sum_{i=1}^n|b_i|\right),
  \end{equation}
where $\delta_1 =\min\{\delta, 2\mu\} > 0$. Furthermore, combining \eqref{eq2-61} with \eqref{eq2-71}, we see that
  \begin{equation}\label{eq2-81}
    \|\nabla F(a,b)\| \geq \delta_0\sum_{i=1}^n\|v_i\|
    \geq \delta_2\left(\sum_{i\in I}|\nabla\tilde h(s)_i| + \sum_{i\in I^c}|a_i| + \sum_{i=1}^n|b_i|\right),
  \end{equation}
  where $\delta_2 =\frac{1}{2}\delta_0\delta_1$ and $s = a^2 - b^2$.

  Now, we will provide an upper bound for $F(a,b) - F(a^*,b^*)$. Fix any $(a,b)\in U$ with $F(a,b)>F(a^*,b^*)$ and \revise{let $s = a^2 - b^2$.} Let $\tilde s =[\Pi_I(a)]^2$. Then we have
  \begin{equation*}
    s_i - \tilde s_i=\begin{cases}
        -b_i^2     & {\rm if}\ i\in I, \\
        a_i^2 - b_i^2 & {\rm if}\ i\in I^c. \\
    \end{cases}
  \end{equation*}
  This implies that
  \begin{equation}\label{eq2-91}
  \textstyle
   \|s - \tilde s\|\leq \| s - \tilde s\|_1 = \sum_{i\in I}|b_i^2|+\sum_{i \in I^c} |a_i^2 - b_i^2| \leq \sum_{i\in I^c}a_i^2 +\sum_{i = 1}^{n}b_i^2.
  \end{equation}
  Recall the representation of $F$ in \eqref{eq2-6} and recall that $b^* = 0$, we have
  \begin{align}\label{eqF101}
          &F(a,b)-F(a^*,b^*)\notag\\
          &=\tilde h(s) - \tilde h(s^*) + \sum_{i=1}^n(\mu + \nabla h(s^*)_i)\left(a_i^2 - (a_i^*)^2\right) + \sum_{i=1}^n(\mu - \nabla h(s^*)_i)b_i^2 \notag\\
          &\overset{\rm(a)}{=} \tilde h(s)-\tilde h(\tilde s)+\tilde h(\tilde s)-\tilde h(s^*)+\sum_{i\in I^c}(\mu + \nabla h(s^*)_i)a_i^2 + \sum_{i=1}^n(\mu - \nabla h (s^*)_i) b_i^2 \notag\\
          &\overset{\rm(b)}{\leq} L\|s-\tilde s\| +\tilde h(\tilde s)-\tilde h(s^*) + \sum_{i\in I^c}(\mu + \nabla h(s^*)_i)a_i^2+\sum_{i=1}^n(\mu - \nabla h(s^*)_i)b_i^2 \notag\\
          &\overset{\rm(c)}{\leq}\tilde h(\tilde s)-\tilde h(s^*)+\sum_{i\in I^c}(\mu + \nabla h(s^*)_i+L)a_i^2+\sum_{i=1}^n(\mu - \nabla h(s^*)_i+L)b_i^2 \notag\\
          &\overset{\rm(d)}{\leq}\tilde h(\tilde s)-\tilde h(s^*)+\delta_3\left(\sum_{i\in I^c}a_i^2 + \sum_{i=1}^nb_i^2\right)\overset{\rm(e)}{\leq}  \tilde h(\tilde s)-\tilde h(s^*)+\delta_4  \|\nabla F(a,b)\|^2,
  \end{align}
  where (a) holds because when $a_i^* > 0$ (i.e., $i\in I$), we have $\mu + \nabla h(s^*)_i =0$, (b) follows from the Lipschitz continuity of $\tilde h$ with modulus $L$, (c) follows from \eqref{eq2-91}, in (d) we set $\delta_3=\max_{i\in [n]}\{\mu+\nabla h(s^*)_i+L,~\mu-\nabla h(s^*)_i+L\}$, (e) follows from \eqref{eq2-81} and we set $\delta_4=\delta_3/\delta_2^2$. If $\tilde h(\tilde s)-\tilde h(s^*)\leq 0$, \revise{we see immediately from \eqref{eqF101} that}
  \begin{equation}
  \label{fkl1}
       F(a,b)-F(a^*,b^*)
           \leq \delta_4 \|\nabla F(a,b)\|^2.
  \end{equation}
  On the other hand, if $0 < \tilde h(\tilde s)-\tilde h(s^*)$, then we must have $0< \tilde h(\tilde s)-\tilde h(s^*) < r$ and $\|\tilde s - s^*\| < r$ thanks to \eqref{defn_U331}. Since we clearly have $\tilde s_{I^c} = 0 = s^*_{I^c}$, we can invoke \eqref{kltildeh} to show that
\begin{align}\label{eqhnormal}
%    \begin{aligned}
    &\tilde h(\tilde s)-\tilde h(s^*)\leq c^{-\frac{1}{\alpha}}\| \nabla \tilde h(\tilde s)_I\|^{\frac{1}{\alpha}}\leq c^{-\frac{1}{\alpha}}\left(\|\nabla \tilde h(s)_I\|+\|\nabla \tilde h(s)_I - \nabla \tilde h(\tilde s)_I\|\right)^{\frac{1}{\alpha}}\notag\\
    &\overset{\rm(a)}{\leq} c^{-\frac{1}{\alpha}} \left(\|\nabla \tilde h(s)_I\| + L\|s-\tilde s\| \right)^{\frac{1}{\alpha}} \overset{\rm(b)}{\leq} \delta_5\left(\|\nabla \tilde h(s)_I\|+ \sum_{i\in I^c}a_i^2+\sum_{i=1}^nb_i^2\right)^{\frac{1}{\alpha}} \notag\\
    &\leq \delta_5\left(\|\nabla \tilde h(s)_I\|_1\!+\! \sum_{i\in I^c}a_i^2+\sum_{i=1}^nb_i^2\right)^{\frac{1}{\alpha}}\!\!\!\!=\!\delta_5\left(\sum_{i\in I}|\nabla \tilde h(s)_i|+ \sum_{i\in I^c}a_i^2+\sum_{i=1}^nb_i^2\right)^{\frac{1}{\alpha}} \notag\\
    &\overset{\rm(c)}{\leq} \delta_5\left(\sum_{i\in I}|\nabla \tilde h(s)_i|+ \sum_{i\in I^c}|a_i|+\sum_{i=1}^n|b_i|\right)^{\frac{1}{\alpha}} \overset{\rm(d)}{\leq}\delta_6\|\nabla F(a,b)\|^{\frac{1}{\alpha}},
%\end{aligned}
\end{align}
where (a) follows from the Lipschitz continuity of $\nabla\tilde h$ with modulus $L$, (b) follows from \eqref{eq2-91} with $\delta_5=\left(c^{-1}\max\{L,1\}\right)^{1/\alpha}$, (c) follows from \eqref{defn_U431}, and (d) follows from \eqref{eq2-81} with $\delta_6=\delta_2^{-1/\alpha}\delta_5$. Combining \eqref{eqF101} with \eqref{eqhnormal}, we conclude that
  \[
    F(a,b)-F(a^*,b^*)\leq \delta_4\|\nabla F(a,b)\|^{2}+\delta_6\|\nabla F(a,b)\|^{\frac{1}{\alpha}}\overset{\rm(a)}{\leq } (\delta_4+\delta_6)\|\nabla F(a,b)\|^{\min\{\frac{1}{\alpha},2\}},
  \]
  where in (a) we have used the fact that $\|\nabla F(a,b)\|\leq 1$ (see \eqref{defn_U431}). This together with \eqref{fkl1} proves the KL property of $F$ at $(a^*,b^*)$ with exponent $\max\{\alpha,\frac12\}$.
\end{proof}

\revise{Note that $F$ may not satisfy the KL property at $(a^*,b^*)$ with exponent $\max\{\alpha,\frac{1}{2}\}$ if strict complementarity fails and $\alpha\in [\frac12,1)$, as illustrated by the following example.}

\begin{exmp}
    \label{ce}
    Let $x\in \R$, and $h(x)=(1-\alpha)|x|^{\frac{1}{1-\alpha}}-x$, $\alpha\in [\frac12,1)$ and $\mu=1$ in \eqref{l1prob}. Then $h\in C^2(\R)$ is convex. Clearly, $0$ is a global minimizer of $f(\cdot):=h(\cdot)+|\cdot|$. Therefore, by \cref{prop1-3}, we know $0$ is also a global minimizer of the corresponding $F$ in \eqref{relaxprob1}. By \cref{prop3-10}, we know that the KL exponent of $f$ at $0$ coincides with the KL exponent of $\tilde f:=\tilde h+\iota_K$ at $0$, where $\tilde h(x)=(1-\alpha)|x|^{\frac{1}{1-\alpha}}$ and $K=N_{\revise{\partial |0|}}(-\nabla h(0))=\R_+$. By direct calculation, we can see the KL exponent of $\tilde f$ at $0$ is $\alpha$. On the other hand, we have
    \[  F(a,b)=h(a^2-b^2)+a^2+b^2=(1-\alpha)|a^2-b^2|^{\frac{1}{1-\alpha}}+2b^2.    \]
    Take $t>0$. Then we have
    %\begin{equation*}
        $\nabla F(t,0)=\begin{bmatrix}
           2t^{\frac{1+\alpha}{1-\alpha}} & 0
        \end{bmatrix}^\top$ and $F(t,0)=(1-\alpha)t^{\frac{2}{1-\alpha}}$.
    %\end{equation*}
    This implies that $\|\nabla F(t,0)\|=2(\frac{1}{1-\alpha}F(t,0))^{\frac{1+\alpha}{2}}$, which shows that the KL exponent of $F$ at $0$ is no less than $\frac{1+\alpha}{2}$.
\end{exmp}

\subsubsection{KL exponent in the absence of strict complementarity}
\label{subs:uncontrainklgeneral}
In this subsection, by imposing additional assumptions on $h$ in \eqref{l1prob}, we investigate how the KL exponent of $F$ in \eqref{relaxprob1} at its second-order stationary points can be inferred from the KL exponent of $f$ in \eqref{l1prob} when the strict complementarity condition fails. Our main tool is the following technical lemma. \revise{The proof of \cref{lemma2-5} can be found in \cref{appendixC}.}

\begin{lem}\label{lemma2-5}
   Suppose that $g\in C^2(\R^n)$ is convex and $\bar x \in \R^n$. Define $\psi(\cdot)=g(\cdot) + \iota_K(\cdot-\bar x)$, where $K$ is a closed convex set, and assume that $\bar x $ is a global minimizer of $\psi$. Let $\Omega:=\argmin \psi$. Fix a partition $J_1,J_2,J_3$ of $[n]$. Assume that
   \begin{enumerate}[\rm (i)]
       \item   There exist a neighborhood $U$ of $\bar x$ and constants $c>0$, \revise{$\gamma\in (0,1]$ such that for all $\rho\in \R^{J_3}_{+}$}, it holds that
   \begin{equation}\label{J3eb}
\revise{\dist(x,\Omega\cap S_{\rho}) \leq c\max\{\dist(x,\Omega),\dist(x,S_{\rho})\}^\gamma}\ \ \ \forall x\in U\cap (\bar x + K),
   \end{equation}
   where \revise{$S_{\rho}:=\{x\in\R^n:|x_i-\bar x_i|\leq \rho_i\ \ \ \forall i\in J_3\}$}.
      \item For all $x\in \bar x+K$ and $i\in J_1$, it holds that $x_i=\bar x_i$.
   \end{enumerate}
    Suppose that $\psi$ satisfies the KL property at $\bar x$ with exponent $\alpha\in(0,1)$. %Let $\beta=2-2\gamma(1-\alpha)-\alpha\in (0,1)$.
    Then, by shrinking $U$ if necessary, there is a constant $\sigma>0$ such that
   \[
    \sum_{i\in J_2}|\nabla g(x)_i|^2+\sum_{i\in J_3}|x_i-\bar x_i||\nabla g(x)_i|^2\geq \sigma(g(x)-g(\bar x))^{1+\beta}\ \ \ \forall x\in U\cap (\bar x+K),
   \]
   where \revise{$\beta=1-\gamma(1-\alpha)\in (0,1)$}.
\end{lem}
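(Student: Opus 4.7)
The plan is to combine the local error bound obtained from the KL property (via \cref{prop3-12}) with the H\"olderian bound in (i), and then convert these into the desired weighted-gradient inequality through convexity of $g$ and a careful Cauchy-Schwarz splitting over the partition $J_1\cup J_2\cup J_3$.

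First, since $\bar x$ is a global minimizer of $\psi$ and $\psi$ satisfies the KL property at $\bar x$ with exponent $\alpha$, \cref{prop3-12} supplies a neighborhood $U_0$ of $\bar x$ and $\sigma_0>0$ with $g(x)-g(\bar x)=\psi(x)-\psi(\bar x)\ge \sigma_0\dist(x,\Omega)^{1/(1-\alpha)}$ for every $x\in U_0\cap(\bar x+K)$. For $x$ in a (possibly smaller) neighborhood of $\bar x$, I would then choose $\rho\in\R^{J_3}_+$ coordinatewise by $\rho_i:=|x_i-\bar x_i|$, so that $x\in S_\rho$ and $\dist(x,S_\rho)=0$. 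Hypothesis (i) gives $\dist(x,\Omega\cap S_\rho)\le c\,\dist(x,\Omega)^\gamma$, which combined with the KL-derived error bound produces an $x^*\in \Omega\cap S_\rho$ with
\[
\|x-x^*\|\le c_1\bigl(g(x)-g(\bar x)\bigr)^{\gamma(1-\alpha)}.
\]
Moreover, $x^*\in S_\rho$ forces $|x^*_i-\bar x_i|\le |x_i-\bar x_i|$, hence $|(x-x^*)_i|\le 2|x_i-\bar x_i|$ for each $i\in J_3$.

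Next, since $g(x^*)=g(\bar x)$, convexity of $g$ yields $g(x)-g(\bar x)\le \langle \nabla g(x),x-x^*\rangle$, and I would decompose this inner product according to $J_1,J_2,J_3$. The $J_1$-block vanishes by (ii) (both $x$ and $x^*$ lie in $\bar x+K$), and the $J_2$-block is at most $\bigl(\sum_{i\in J_2}|\nabla g(x)_i|^2\bigr)^{1/2}\|x-x^*\|$ via Cauchy-Schwarz. The main obstacle is the $J_3$-block, which is where the exponent $1+\beta$ (rather than a cruder $2$) must be extracted. A naive use of $|(x-x^*)_i|\le 2|x_i-\bar x_i|$ only produces $\sum_{J_3}2|\nabla g(x)_i||x_i-\bar x_i|$, whose natural Cauchy-Schwarz bound brings out a factor $\bigl(\sum_{J_3}|x_i-\bar x_i|\bigr)^{1/2}$ that is not controlled by $g(x)-g(\bar x)$. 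The trick I would use is to rewrite $|(x-x^*)_i|^2\le 2|x_i-\bar x_i|\,|(x-x^*)_i|$, multiply by $|\nabla g(x)_i|^2$, take square roots, and then apply Cauchy-Schwarz to the sum to obtain
\[
\sum_{i\in J_3}|\nabla g(x)_i|\,|(x-x^*)_i|\le c_2\Bigl(\sum_{i\in J_3}|\nabla g(x)_i|^2|x_i-\bar x_i|\Bigr)^{1/2}\|x-x^*\|^{1/2}.
\]
This fractional-power reorganization is what produces $\|x-x^*\|^{1/2}$ (rather than $\|x-x^*\|$) outside the square root, and is the crucial step that recovers the sharp exponent.

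Finally, writing $A(x):=\sum_{i\in J_2}|\nabla g(x)_i|^2+\sum_{i\in J_3}|\nabla g(x)_i|^2|x_i-\bar x_i|$ and combining the three blocks gives $g(x)-g(\bar x)\le c_3\sqrt{A(x)}\sqrt{\|x-x^*\|}$ after absorbing the $J_2$-term into the dominant one (which is valid once $\|x-x^*\|\le 1$, i.e., for $x$ sufficiently close to $\bar x$). Substituting the bound $\|x-x^*\|\le c_1(g(x)-g(\bar x))^{\gamma(1-\alpha)}$ and rearranging then yields $A(x)\ge \sigma(g(x)-g(\bar x))^{2-\gamma(1-\alpha)}=\sigma(g(x)-g(\bar x))^{1+\beta}$, as required.
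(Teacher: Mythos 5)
Your proposal is correct and follows the same route as the paper: both proofs convert the KL property into the error bound $\dist(x,\Omega)\le c_1(g(x)-g(\bar x))^{1-\alpha}$ via \cref{prop3-12}, choose $\rho_i=|x_i-\bar x_i|$ so that $x\in S_\rho$ and \eqref{J3eb} yields a point $\hat x\in\Omega\cap S_\rho$ with $\|x-\hat x\|\le c_2(g(x)-g(\bar x))^{\gamma(1-\alpha)}$, and then combine the convexity inequality $g(x)-g(\hat x)\le\langle\nabla g(x),x-\hat x\rangle$ with a Cauchy--Schwarz step weighted by $|x_i-\bar x_i|^{1/2}$ on $J_3$. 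The only place you diverge is in bounding the second Cauchy--Schwarz factor over $J_3$: the paper splits $J_3$ into three index sets $I_1,I_2,I_3$ according to whether $|x_i-\bar x_i|$ is zero, small, or large relative to $(g(x)-g(\bar x))^{\gamma(1-\alpha)}$ and bounds $|x_i-\hat x_i|^2/|x_i-\bar x_i|$ separately in each case, whereas you observe that $\hat x\in S_\rho$ forces $|(x-\hat x)_i|\le 2|x_i-\bar x_i|$ and hence $|(x-\hat x)_i|^2\le 2|x_i-\bar x_i|\,|(x-\hat x)_i|$, which bounds the whole $J_3$ contribution by a multiple of $\|x-\hat x\|_1$ in one line. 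Your version is a genuine streamlining of that step and reaches the same exponent $1+\beta=2-\gamma(1-\alpha)$; no gap.
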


Before presenting the main result in this subsection, we first introduce some notation. Let $s^*$ be a stationary point of \eqref{l1prob}. We define the following index sets:
  \begin{equation}\label{index}
  \begin{aligned}
            & J_1 = \{i \in [n]: \ s_i^*  =  0, \ \nabla h(s^*)_i \in (-\mu,\mu)\},\\
            & J_2 = \{i \in [n]: \ s_i^*\neq 0, \ \nabla h(s^*)_i \in \{-\mu,\mu\}\},\\
            & J_{3,1} = \{i\in[n]:\ s_i^* = 0,\ \nabla h(s^*)_i = -\mu\},\\
            & J_{3,2}=\{i\in[n]:\ s_i^* = 0,\ \nabla h(s^*)_i = \mu\},\\
            & J_3=J_{3,1}\cup J_{3,2}.
  \end{aligned}
  \end{equation}
  \vspace{-0.5 cm}
\begin{equation}\label{Nindex}
  K := N_{\mu \partial \|s^*\|_1}(- \nabla h(s^*))=\prod_{i\in [n]}K_i, \ \ {\rm where} \ K_i=\begin{cases}
       \{0\}  & i\in J_1, \\
       \R  & i\in J_2, \\
        \R_+ & i\in J_{3,1},\\
        \R_- & i\in J_{3,2}.
    \end{cases}
  \end{equation}
One can check readily that for all $x\in s^* + K$, we have $x_i = s_i^*$ for $i\in J_1$. \revise{Next, we present the result concerning the KL exponent $F$ in \eqref{relaxprob1} based on the KL exponent of $f$ by assuming the convexity of $h$ in \eqref{l1prob} and an additional error bound condition \eqref{nJ3eb}.}

\begin{thm}[KL exponent of $F$ in the absence of strict complementarity]\label{thm3-11}
    Let $(a^*,b^*)$ be a second-order stationary point of $F$ in \eqref{relaxprob1} and assume that $h$ in \eqref{l1prob} is convex. Define $s^*=(a^*)^2-(b^*)^2$ and $\Omega=\{x\in \R^n:\ f(x)=f(s^*)\}$, where $f$ is defined in \eqref{l1prob}. Let $J_1,J_2,J_3$, and $K$ be defined in \eqref{index} and \eqref{Nindex}, respectively. Suppose that there exist a neighborhood $V$ of $s^*$ and constants $c>0$, \revise{$\gamma\in (0,1]$} such that for all \revise{$\rho\in \R_+^{J_3}$}, it holds that
   \begin{equation}\label{nJ3eb}
        \dist(x,\Omega\cap \revise{S_{\rho}}) \leq c\max\{\dist(x,\Omega),\dist(x,\revise{S_{\rho}})\}^\gamma\ \ \ \forall x\in V\cap (s^* + K),
   \end{equation}
   where \revise{$S_{\rho}:=\{x\in\R^n:~|x_i- s_i^*|\leq \rho_i\ \ \forall i\in J_3\}$}. \revise{Suppose also that} $f$ satisfies the KL property at $s^*$ with exponent $\alpha\in (0,1)$. \revise{Then} $F$ satisfies the KL property at $(a^*,b^*)$ with exponent $\frac{1+\beta}{2}$, where \revise{$\beta=1-\gamma(1-\alpha)\in (0,1)$}.
\end{thm}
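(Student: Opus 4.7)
The plan is to adapt the argument used for \cref{thm2-1}, substituting \cref{lemma2-5} for \cref{coro3-11} in order to accommodate the failure of strict complementarity on $J_3$. First I would invoke the reduction of \cref{remarkab}(ii) to assume, without loss of generality, $a^*\geq 0$, $b^*=0$, and $\nabla h(s^*)_i=-\mu$ whenever $s_i^*=0$ and $\nabla h(s^*)_i\in\{-\mu,\mu\}$; under this reduction $J_{3,2}=\emptyset$, so $J_3=J_{3,1}$ and $K_i=\R_+$ for all $i\in J_3$. With $\tilde h(\cdot)=h(\cdot)-\langle\nabla h(s^*),\cdot-s^*\rangle$ and $\tilde\psi(\cdot)=\tilde h(\cdot)+\iota_{s^*+K}(\cdot)$, \cref{prop3-10}(ii)--(iii) (applied with $g=\tilde h$ and $\varphi=\mu\|\cdot\|_1$) imply that the KL exponent of $\tilde\psi$ at $s^*$ coincides with that of $f$ there and that $\Omega$ locally agrees with $\{x:\tilde\psi(x)=\tilde\psi(s^*)\}$. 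Combining this with \eqref{nJ3eb}, the convexity and $C^2$ smoothness of $\tilde h$, and the fact that $s^*$ minimizes $\tilde\psi$, I can invoke \cref{lemma2-5} at $\bar x=s^*$ to obtain $\sigma>0$ and a neighborhood $V$ of $s^*$ such that
\begin{equation}\label{pro311key}
\sum_{i\in J_2}|\nabla\tilde h(x)_i|^2+\sum_{i\in J_3}|x_i-s_i^*|\,|\nabla\tilde h(x)_i|^2\geq \sigma(\tilde h(x)-\tilde h(s^*))^{1+\beta}
\end{equation}
for every $x\in V\cap(s^*+K)$.

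Next, given $(a,b)$ close to $(a^*,b^*)$ with $s:=a^2-b^2$, I would define a surrogate $\tilde s\in s^*+K$ by $\tilde s_i=0$ ($i\in J_1$), $\tilde s_i=s_i$ ($i\in J_2$), and $\tilde s_i=a_i^2$ ($i\in J_3$), so that $\|s-\tilde s\|\leq\sum_{i\in J_1}(a_i^2+b_i^2)+\sum_{i\in J_3}b_i^2$. The representation \eqref{eq2-6}, together with Lipschitz continuity of $\tilde h$, then yields an upper bound analogous to \eqref{eqF101}:
\begin{equation*}
F(a,b)-F(a^*,b^*)\leq [\tilde h(\tilde s)-\tilde h(s^*)]+C_1\Bigl(\sum_{i\in J_1}(a_i^2+b_i^2)+\sum_{i\in J_3}b_i^2\Bigr).
\end{equation*}

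The key step is then the coordinatewise lower bound on $\|\nabla F(a,b)\|^2$ through the vectors $v_i$ in \eqref{eqgradientF}. For $i\in J_1$, strict complementarity at $s^*$ yields $\|v_i\|^2\gtrsim a_i^2+b_i^2$ exactly as in the proof of \cref{thm2-1}; for $i\in J_2$, nonvanishing of $a_i$ near $a_i^*>0$ gives $\|v_i\|^2\gtrsim|\nabla\tilde h(s)_i|^2+b_i^2$; and for $i\in J_3$, the two components of $v_i$ after the reduction read $a_i\nabla\tilde h(s)_i$ and $b_i(2\mu-\nabla\tilde h(s)_i)$, so that on a small enough neighborhood (where $|\nabla\tilde h(s)_i|\leq\mu$) one has $\|v_i\|^2\gtrsim a_i^2|\nabla\tilde h(s)_i|^2+b_i^2$. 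Summing these, using $a_i^2=|\tilde s_i-s_i^*|$ for $i\in J_3$, and employing the Lipschitz continuity of $\nabla\tilde h$ to swap $\nabla\tilde h(s)$ for $\nabla\tilde h(\tilde s)$ up to an $O(\|s-\tilde s\|)$ error, I would conclude that the left-hand side of \eqref{pro311key} evaluated at $\tilde s$ is dominated by a constant multiple of $\|\nabla F(a,b)\|^2$. Feeding this into \eqref{pro311key} gives $\tilde h(\tilde s)-\tilde h(s^*)\lesssim\|\nabla F(a,b)\|^{2/(1+\beta)}$, while the quadratic tail in the upper bound above is itself $\leq C_2\|\nabla F(a,b)\|^2$ via the $J_1$ and $J_3$ estimates. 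Since $\|\nabla F(a,b)\|\leq 1$ on a small enough neighborhood and $2/(1+\beta)\leq 2$, combining yields $F(a,b)-F(a^*,b^*)\lesssim\|\nabla F(a,b)\|^{2/(1+\beta)}$, which is precisely the KL inequality at $(a^*,b^*)$ with exponent $(1+\beta)/2$.

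The hard part will be this last step, especially extracting the mixed bound $a_i^2|\nabla\tilde h(s)_i|^2\lesssim\|v_i\|^2$ on $J_3$, transferring $\nabla\tilde h(s)$ to $\nabla\tilde h(\tilde s)$ without losing the exponent $\beta$, and verifying that the cross-terms produced by this transfer, once bounded through $\|s-\tilde s\|$, remain small enough to be reabsorbed into the quadratic tail already controlled by $\|\nabla F(a,b)\|^2$.
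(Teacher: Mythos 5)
Your proposal is correct and follows essentially the same route as the paper's proof: reduce via \cref{remarkab}(ii), obtain the key inequality from \cref{prop3-10} and \cref{lemma2-5}, derive the coordinatewise lower bounds on the $v_i$ over $J_1$, $J_2$, $J_3$, and transfer $\nabla\tilde h(s)$ to $\nabla\tilde h(\tilde s)$ by Lipschitz continuity before combining. The only (immaterial) differences are your choice $\tilde s_i=s_i$ on $J_2$ instead of $a_i^2$, and that the paper assembles the final estimate by raising $F(a,b)-F(a^*,b^*)$ to the power $1+\beta$ and using convexity of $|\cdot|^{1+\beta}$, whereas you absorb the quadratic tail directly using $\|\nabla F(a,b)\|\le 1$.
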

\begin{rem}[Comments on \eqref{nJ3eb}]
  \label{rem3-12}
    Notice that \revise{$s^* \in \Omega\cap S_\rho$} for all \revise{$\rho\in\R_{+}^{J_3}$}, which implies that \revise{$\Omega \cap S_\rho\neq \emptyset$}.
    By the Hoffman's error bound, condition \eqref{nJ3eb} holds with $\gamma=1$ if $\Omega$ is polyhedral\revise{; see \cite[Lemma 3.2.3]{facchinei2007finite}}. Since $s^*\in \argmin f$ (thanks to \cref{prop1-3} and the assumptions that $h$ is convex and $(a^*,b^*)$ is a second-order stationary point), we know that $\Omega\ (=\argmin f)$ is polyhedral if $h(x)=g(Ax)+\langle c,x\rangle$, where $g:\R^m \to \R$ is a Lipschitz differentiable locally strongly convex function, $c\in \R^n$ and $A\in \R^{m\times n}$; see \cite[Section 4.2]{zhou2017unified}. This covers the commonly used least squares loss function (i.e., $h(x) = \frac12\|Ax - y\|^2$ for some $A\in \R^{m\times n}$ and $y\in \R^m$) and logistic loss function (i.e., $h(x) = \frac1m\sum_{i=1}^m \log(1 + e^{\langle y_i,x\rangle})$ for some $y_i\in \R^n$, $i = 1,\ldots,m$).
\end{rem}
\begin{proof}
\revise{The overall proof strategy is similar to that of \cref{thm2-1}. Instead of using \cref{coro3-11}, here, we use \cref{lemma2-5} to connect the lower bound of $\|\nabla F(a,b)\|$ and the upper bound of $F(a,b)-F(a^*,b^*)$.
}

  \revise{We now start the proof}. Since $(a^*,b^*)$ is a second-order stationary point of $F$ and $h$ is convex, we have from \cref{prop1-3} that $s^*$ is a global minimizer of \eqref{l1prob}, $\min\{(a^*)^2,(b^*)^2\}=0$, and $-\nabla h(s^*) \in \mu\partial \|s^*\|_1$. In view of \cref{remarkab}(ii), we may assume without loss of generality that $a^*\geq 0$, $b^*=0$, and $\nabla h(s^*)_i = -\mu$ whenever $s^*_i = 0$ and $\nabla h(s^*)_i\in \{-\mu,\mu\}$ (hence, $J_{3,2}=\emptyset$). We can then rewrite the index sets in \eqref{index} as follows:
  \begin{equation}\label{simpleJ}
  \begin{aligned}
    & J_1=\{i\in[n]:\ a_i^*=0,\ \nabla h(s^*)_i \in (-\mu,\mu)\},\\
    & J_2=\{i\in[n]:\ a_i^*>0, \ \nabla h(s^*)_i = - \mu\},\\
    & J_{3}=\{i\in[n]:\ a_i^*=0,\ \nabla h(s^*)_i = -\mu \},\\
  \end{aligned}
  \end{equation}
  and observe that \eqref{nJ3eb} holds for the above index sets and the $K$ in \eqref{Nindex} (with $J_{3,2}=\emptyset$) thanks to the definition of $P$ in \cref{remarkab}(ii).

    Next, let $\tilde h(\cdot)=h(\cdot)-\langle \nabla h(s^*),\cdot - s^* \rangle$. According to \cref{prop3-10}, we know that $\tilde f(\cdot): = \tilde h(\cdot) + \iota_K(\cdot-s^*)$ satisfies the KL property at $s^*$ with exponent $\alpha$, and $\Omega$ locally agrees with \revise{$\argmin\tilde f$}. This together with \eqref{nJ3eb} shows that condition (i) in \cref{lemma2-5} holds with $\psi = \tilde f$ and $J_1$, $J_2$ and $J_3$ as in \eqref{simpleJ}. In addition, condition (ii) in \cref{lemma2-5} follows from \eqref{simpleJ} and the definition of $K_i$ in \eqref{Nindex} (with $J_{3,2}=\emptyset$). We can now apply \cref{lemma2-5} with $(\psi,g)=(\tilde f,\tilde h)$ and $J_1$, $J_2$, $J_3$ and $K$ defined in \eqref{simpleJ} and \eqref{Nindex} (with $J_{3,2}=\emptyset$), respectively, to deduce the existence of a constant $\sigma > 0$ and a bounded neighborhood $V_1$ of $s^*$ such that
   \begin{equation}
   \label{kltildehn}
       \sum_{i\in J_2}|\nabla \tilde h(x)_i|^2+\sum_{i\in J_3}|x_i-s_i^*||\nabla \tilde h(x)_i|^2\!\geq\! \sigma(\tilde h(x)-\tilde h(s^*))^{1+\beta}\ \ \ \forall x\in V_1\cap(s^*+K).
   \end{equation}

   Now, in view of \eqref{simpleJ} \revise{and utilizing continuity}, we can take a sufficiently small neighborhood $U$ of $(a^*,b^*)$ to ensure that there exists $\delta > 0$ such that for all $(a,b)\in U$, the following holds:
  \begin{subequations}
  \begin{align}
        &\min_{i\in J_1} \{\mu + \nabla h(s^*)_i,\mu - \nabla h(s^*)_i\} - |\nabla \tilde h(a^2 - b^2)_i|\geq\delta,
        \label{defn_U1new} \\
        & \min_{i\in J_2}a_i - |b_i| \geq \delta\revise{,} \label{defn_U2new}\\
        &\min_{i\in J_3} 2\mu - |\nabla\tilde h(a^2-b^2)_i|\geq \delta, \label{defn_U3new}\\
        &\|a_{J_2^c}\|_\infty\leq 1,~\|b\|_\infty\leq 1, \label{defn_U4new} \\
        &[\Pi_{J_1^c}(a)]^2\in V_1,~ a^2 - b^2 \in V_1,\label{defn_U5new}
  \end{align}
  \end{subequations}
  where $\Pi_{J_1^c}$ is defined as in \eqref{projindex}.

  We now consider three cases, $i\in J_1$, $i\in J_2$, and $i\in J_3$, to provide a lower bound for $\sum_{i=1}^n\|v_i\|$ over $U$, where $v_i$ is given in \eqref{eqgradientF}. To this end, fix any $(a,b)\in U$ and let $s=a^2-b^2$. We then proceed with the three cases.
  \begin{enumerate}[\rm (i):]
  \item $i\in J_1$. We see from \eqref{defn_U1new} and \eqref{eqgradientF} that
  \begin{equation*}
    2\|v_i\| \geq |a_i||\nabla \tilde h(s)_i + \mu + \nabla h(s^*)_i|+ |b_i||-\nabla \tilde h(s)_i + (\mu - \nabla h(s^*)_i)| \geq \delta(|a_i|+|b_i|).
  \end{equation*}

  \item $i\in J_2$. From \eqref{simpleJ}, we see that $s^*_i=(a_i^*)^2 > 0$ and $\nabla h(s^*)_i = -\mu$. We can then rewrite $v_i$ in \eqref{eqgradientF} as
  \[
  v_i=
  \begin{bmatrix}
  a_i\nabla\tilde h(s)_i \\
  -b_i\nabla\tilde h(s)_i + 2\mu b_i
  \end{bmatrix}.
  \]
  Thus,\vspace{-0.2 cm}
  \begin{equation*}
  \begin{aligned}
        2\|v_i\|&\geq | a_i\nabla\tilde h(s)_i|+|-b_i\nabla\tilde h(s)_i + 2\mu b_i| \geq a_i|\nabla\tilde h(s)_i|-|b_i||\nabla\tilde h(s)_i|+2\mu|b_i| \\
      &=(a_i-|b_i|)|\nabla\tilde h(s)_i|+2\mu|b_i| \overset{\rm(a)}{\geq} \delta|\nabla\tilde h(s)_i|+2\mu|b_i|,
  \end{aligned}
  \end{equation*}
  where (a) follows from \eqref{defn_U2new}.

  \item $i\in J_{3}$. We have from \eqref{defn_U3new} and \eqref{eqgradientF} that
  \begin{equation*}
    2\|v_i\| \geq |a_i||\nabla\tilde h(s)_i| + |b_i|(2\mu -|\nabla\tilde h(s)_i|) \geq |a_i||\nabla\tilde h(s)_i| + \delta|b_i|.
  \end{equation*}
  \end{enumerate}
  Summing up the displays from the above three cases for all $i\in [n]$, we have
  \begin{equation}\label{Gradient2}
    2\sum_{i = 1}^n\|v_i\|\geq \delta_1\left(\sum_{i\in J_1}|a_i| + \sum_{i\in J_2}|\nabla\tilde h(s)_i| + \sum_{i\in J_3}|a_i||\nabla\tilde h(s)_i| + \sum_{i = 1}^n|b_i|\right),
  \end{equation}
  where $\delta_1 =\min\{\delta, 1, 2\mu\}$. Now, combining \eqref{eq2-61} with \eqref{Gradient2}, we readily derive a lower bound for $\|\nabla F(a,b)\|$ as follows:
  \begin{align}
    &\textstyle\|\nabla F(a,b)\| \geq \delta_0\sum_{i=1}^n\|v_i\|\notag\\
    &\overset{\rm (a)}\geq \delta_2\left(\sum_{i\in J_1}|a_i| + \sum_{i\in J_2}|\nabla\tilde h(s)_i| + \sum_{i\in J_3}|a_i||\nabla\tilde h(s)_i| + \sum_{i = 1}^n|b_i|\right)\notag\\
     &\ge \delta_2\sqrt{\sum_{i\in J_1}|a_i|^2 + \sum_{i \in J_2}|\nabla\tilde h(s)_i|^2 + \sum_{i\in J_3}|a_i|^2|\nabla\tilde h(s)_i|^2 + \sum_{i=1}^n|b_i|^2}\label{Gradient51}\\
     &\overset{\rm(b)}{\geq} \delta_2\sqrt{\sum_{i\in J_1}|a_i|^{2(1+\beta)} + \sum_{i\in J_2}|\nabla\tilde h(s)_i|^2 + \sum_{i\in J_3}|a_i|^2|\nabla\tilde h(s)_i|^2 + \sum_{i=1}^n|b_i|^{2(1+\beta)}},\label{Gradient52}
  \end{align}
  where (a) holds with $\delta_2 =\frac{1}{2}\delta_0\delta_1$, and (b) follows from \eqref{defn_U4new}.

  Next, we will derive an upper bound for $F(a,b) - F(a^*,b^*)$. Fix any $(a,b)\in U$ with $F(a,b)>F(a^*,b^*)$. Define $s = a^2 - b^2$ and $\tilde s=[\Pi_{J_1^c}(a)]^2$, where $\Pi_{J_1^c}(\cdot)$ is defined as in \eqref{projindex}. Then we have
  \begin{equation}
  \label{sminustildes}
    s_i - \tilde s_i=\begin{cases}
        a_i^2 - b_i^2     & i\in J_1, \\
        - b_i^2 & i\in J_2\cup J_3, \\
    \end{cases}\quad \quad \|s-\tilde s\|^2=\sum_{i\in J_1}(a_i^2-b_i^2)^2+\sum_{i\in J_2\cup J_3}b_i^4.
  \end{equation}
  This implies that
  \begin{equation}\label{boundtildes}
  \textstyle
   \|s - \tilde s\|\leq \| s - \tilde s\|_1 = \sum_{i\in J_1}|a_i^2 - b_i^2|+\sum_{i \in J_2\cup J_3} |- b_i^2| \leq \sum_{i\in J_1}a_i^2+\sum_{i=1}^nb_i^2.
  \end{equation}
  Let $L$ denote the Lipschitz continuity modulus of both $\nabla \tilde h$ and $\tilde h$ on the bounded neighborhood $\conv(V_1)$, where $V_1$ is defined as in \eqref{kltildehn}. Using the representation of $F$ in \eqref{eq2-6} (and recall that $b^* = 0$), we have\vspace{-0.1 cm}
  \begin{align}\vspace{-0.1 cm}
%  \begin{aligned}
          &F(a,b)-F(a^*,b^*)\notag\\
          &=\tilde h(s) - \tilde h(s^*) + \sum_{i=1}^n(\mu + \nabla h(s^*)_i)\left(a_i^2 - (a_i^*)^2\right) + \sum_{i=1}^n(\mu - \nabla h(s^*)_i)b_i^2 \notag\\
          &\overset{\rm(a)}{=} \tilde h(s)-\tilde h(\tilde s)+\tilde h(\tilde s)-\tilde h(s^*)+\sum_{i=1}^n(\mu + \nabla h(s^*)_i)a_i^2 + \sum_{i=1}^n(\mu - \nabla h(s^*)_i)b_i^2 \notag\\
          &\overset{\rm(b)}{=} \tilde h(s)-\tilde h(\tilde s)+\tilde h(\tilde s)-\tilde h(s^*)+\sum_{i\in J_1}(\mu + \nabla h(s^*)_i)a_i^2 + \sum_{i=1}^n(\mu - \nabla h(s^*)_i)b_i^2 \notag\\
          &\overset{\rm(c)}{\leq} L\|s-\tilde s\|+\tilde h(\tilde s)-\tilde h(s^*)+\sum_{i\in J_1}(\mu + \nabla h(s^*)_i)a_i^2+\sum_{i=1}^n(\mu - \nabla h(s^*)_i)b_i^2 \notag\\
          &\overset{\rm(d)}{\leq} \tilde h(\tilde s)-\tilde h(s^*)+\sum_{i\in J_1}(\mu + \nabla h(s^*)_i + L)a_i^2+\sum_{i=1}^n(\mu - \nabla h(s^*)_i + L)b_i^2 \label{eqF1sub}\\
          &\overset{\rm(e)}{\leq} \tilde h(\tilde s)-\tilde h(s^*)+\delta_3\|\nabla F(a,b)\|^2,\label{eqF1}
%  \end{aligned}
  \end{align}
  where (a) holds because $a_i^* > 0$ implies $\mu + \nabla h(s^*)_i =0$ (see \eqref{simpleJ}), (b) holds because $\mu + \nabla h(s^*)_i =0$ except when $i \in J_1$, (c) follows from the Lipschitz continuity of $\tilde h$ on $\conv(V_1)$ with modulus $L$ and \eqref{defn_U5new} (so that $s$ and $\tilde s\in V_1$), (d) follows from \eqref{boundtildes}, (e) follows from \eqref{Gradient51} and we set
  $\delta_3=\delta_2^{-2}\cdot\max_{i\in [n]}\{\mu+\nabla h(s^*)_i+L,\mu-\nabla h(s^*)_i+L\}$.

  Now, if $\tilde h(\tilde s)-\tilde h(s^*)\leq 0$, we see from \eqref{eqF1} that\vspace{-0.1 cm}
  \begin{equation}\label{kleq0}\vspace{-0.1 cm}
      F(a,b)-F(a^*,b^*)\leq  \delta_3\|\nabla F(a,b)\|^2.
  \end{equation}
  Otherwise, using the Lipschitz continuity of $\nabla \tilde h$ on $\conv(V_1)$, \eqref{kltildehn}, \eqref{defn_U5new} (so that $s$ and $\tilde s\in V_1$) and noting that $\tilde s \in s^* + K$, we have
  \begin{align}\label{eqF2}
%  \begin{aligned}
    &\sigma\left(\tilde h(\tilde s) - \tilde h(s^*)\right)^{1 + \beta}\leq  \sum_{i\in J_2}|\nabla \tilde h(\tilde s)_i|^2 + \sum_{i\in J_3}|\tilde s_i-s^*_i||\nabla \tilde h(\tilde s)_i|^2 \notag\\
    &\!\!\overset{\rm(a)}{=}\sum_{i\in J_2}|\nabla \tilde h(\tilde s)_i - \nabla \tilde h(s)_i + \nabla \tilde h(s)_i|^2 + \sum_{i\in J_3}a_i^2|\nabla \tilde h(\tilde s)_i - \nabla \tilde h(s)_i + \nabla \tilde h(s)_i|^2 \notag\\
    &\!\!\overset{\rm(b)}{\leq}\! 2\!\!\sum_{i\in J_2}\!(|\nabla \tilde h(s)_i|^2 \!\!+\! |\nabla \tilde h(s)_i \!-\! \nabla \tilde h(\tilde s)_i|^2) \!+\! 2\!\!\sum_{i\in J_3}\!\!a_i^2(|\nabla \tilde h(s)_i|^2 \!\!+\! |\nabla \tilde h(s)_i\!-\!\nabla \tilde h(\tilde s)_i|^2) \notag\\
    &\!\!\overset{\rm(c)}{\leq} \!2\!\!\sum_{i\in J_2}\!|\nabla \tilde h(s)_i|^2 \!\!+\! 2\sum_{i\in J_3}\!a_i^2|\nabla \tilde h(s)_i|^2 \!+\! 2L^2\!\!\sum_{i\in J_2}\!\|s_i- \tilde s_i\|^2 \!\!+\! 2L^2\!\sum_{i\in J_3}\!a_i^2\|s_i- \tilde s_i\|^2\notag\\
    &\!\!\overset{\rm(d)}{\leq} 2\sum_{i\in J_2}|\nabla \tilde h(s)_i|^2 + 2\sum_{i\in J_3}a_i^2|\nabla \tilde h(s)_i |^2 + 2L^2\|s- \tilde s\|^2\notag\\
    &\!\!\overset{\rm(e)}{\leq} 2\sum_{i\in J_2}|\nabla \tilde h(s)_i|^2 + 2\sum_{i\in J_3}a_i^2|\nabla \tilde h(s)_i |^2 + 4L^2\left(\sum_{i\in J_1}a_i^4 + \sum_{i=1}^n b_i^4 \right),
%  \end{aligned}
  \end{align}
 where (a) follows from $\tilde s_i - s^*_i = a_i^2$ for $i \in J_3$, (b) holds because of the fact that $(x+y)^2\leq 2x^2+2y^2$ for all $x,y\in \R$, (c) follows from the Lipschitz continuity of $\nabla \tilde h$, (d) follows from \eqref{defn_U4new} (so that $|a_i| \le 1$ for $i\in J_2^c$), and (e) follows from the second relation in \eqref{sminustildes} and we have used the inequality $(x+y)^2\leq 2x^2+2y^2$ again. Consequently, using the convexity of $|\cdot|^{1+\beta}$ and \eqref{eqF1sub}, we deduce that there is a $\delta_4>0$ which is independent of $(a,b)\in U$ such that\vspace{-0.2 cm}
 \begin{align*}\vspace{-0.2 cm}
   &\left( F(a,b) - F(a^*,b^*)\right)^{1+\beta} \leq \delta_4 \left(\left(\tilde h(\tilde s) - \tilde h(s^*)\right)^{1+\beta}+\sum_{i\in J_1}|a_i|^{2(1+\beta)}+\sum_{i=1}^n |b_i|^{2(1+\beta)}\right) \\
   &\overset{\rm(a)}{\leq} \delta_5\!\left(\sum_{i\in J_2}\!|\nabla \tilde h(s)_i|^2 \!+\!\! \sum_{i\in J_3}a_i^2|\nabla \tilde h(s)_i|^2 \!+\!\! \sum_{i\in J_1}a_i^4+\sum_{i=1}^nb_i^4 \!+\!\! \sum_{i\in J_1}|a_i|^{2(1+\beta)} \!+\!\! \sum_{i=1}^n |b_i|^{2(1+\beta)}\!\right) \\
   &\overset{\rm(b)}{\leq} 2\delta_5\!\left(\sum_{i\in J_1}\!|a_i|^{2(1+\beta)}\!\!+\!\!\sum_{i\in J_2}\!|\nabla \tilde h(s)_i|^2\!\!+\!\!\sum_{i\in J_3}\!\!a_i^2|\nabla \tilde h(s)_i|^2\!\!+\!\!\sum_{i=1}^n |b_i|^{2(1+\beta)}\!\right) \!\overset{\rm(c)}{\leq}\! \delta_6\|\nabla F(a,b)\|^2,
 \end{align*}
 where (a) follows from \eqref{eqF2} and we set $\delta_5=\delta_4\max\{1,\sigma^{-1}\max\{2,4L^2\}\}$, (b) follows from \eqref{defn_U4new}, (c) follows from \eqref{Gradient52} by setting $\delta_6=2\delta_2^{-2}\delta_5$. This together with \eqref{kleq0} implies that $F$ satisfies the KL property at $(a^*,b^*)$ with exponent $\frac{1+\beta}{2}$.
\end{proof}
\begin{rem}[Tightness of the exponent $\frac{1+\beta}2$ in \cref{thm3-11}]
    When $\gamma=1$, the constant $\beta$ given in \cref{thm3-11} becomes $\alpha$. Therefore, under the settings of \cref{thm3-11}, the function $F$ satisfies the KL property at the second-order stationary point with exponent $\frac{1+\alpha}{2}$, which happens to be the exponent given in \cref{ce}. \revise{Moreover, the set of minimizers in \cref{ce} is a singleton, so the error bound condition \eqref{nJ3eb} holds with $\gamma=1$ by \cref{rem3-12}.} Consequently, the exponent given in \cref{thm3-11} is tight when $\gamma=1$ and $\alpha \in [\frac12,1)$. \revise{We would also present the next example to show that this exponent is tight when $\alpha\in (\frac12,1)$ and $\gamma\in (0,\frac{1}{2}]$.}
\end{rem}
\revise{
 \begin{exmp}
  \label{cen}
  Let $x\in \R^2$, and $h(x)=(1-\alpha)(|x_1|^\frac{1}{\gamma}-x_2)_+^{\frac{1}{1-\alpha}}-x_1-x_2$ for $x=(x_1,x_2)\in\R^2$ with $d_+:=\max\{d,0\}$ for $d\in \R$, $0<\gamma\leq\frac{1}{2}$, $\alpha\in (\frac{1}{2},1)$ and $\mu=1$. We would like to show that the KL exponent of the corresponding $f$ in \eqref{l1prob} is $\alpha$ at $0$. Clearly, $h\in C^2(\R^2)$ and is convex. Moreover, $0$ is a global minimizer of $f$. Therefore, we know $0$ is also a global minimizer of $F$ in \eqref{relaxprob1} by \cref{prop1-3}. By \cref{prop3-10}, we know the KL exponent of $f$ at $0$ coincides with the KL exponent of $\tilde f:=\tilde h+\iota_K$ at $0$, where $\tilde h(x)=(1-\alpha)(|x_1|^\frac{1}{\gamma}-x_2)_+^{\frac{1}{1-\alpha}}$ and $K=N_{\partial \|0\|_1}(-\nabla h(0))=\R^2_+$. Take $x$ to be sufficiently close to $0$ with $x\in K$, then $x_1,x_2\geq 0$ and we have:
  \begin{align*}
    \dist(0,\partial\tilde f(x))&=\dist(-\nabla \tilde h(x),N_K(x)) \geq \dist((x_1^{\frac{1}{\gamma}}-x_2)_+^{\frac{\alpha}{1-\alpha}},N_{\R_+}(x_2))\\
    &\overset{\rm (a)}{=}(x_1^{\frac{1}{\gamma}}-x_2)_+^{\frac{\alpha}{1-\alpha}}=(\tilde h(x)-\tilde h(0))^{\alpha}/(1-\alpha)^\alpha,
  \end{align*}
   where in (a) we have used $0\in N_{\R_+}(x_2)\subseteq \R_-$. Therefore, the KL exponent of $f$ at $0$ is $\alpha$. This exponent is tight since for all $x\in K$ with $x_1\leq 1$ we have:
   \begin{equation}
    \label{bound_tildef}
    \begin{aligned}
      &\dist(0,\partial\tilde f(x))\leq \|\nabla \tilde h(x)\|=\left\|\begin{bmatrix} \frac{1}{\gamma}x_1^{\frac{1}{\gamma}-1} (x_1^\frac{1}{\gamma}-x_2)_+^{\frac{\alpha}{1-\alpha}} \\   -(x_1^\frac{1}{\gamma}-x_2)_+^{\frac{\alpha}{1-\alpha}}   \end{bmatrix}   \right\| \\
      &\leq\sqrt{1/\gamma^2+1}(\tilde h(x)-\tilde h(0))^{\alpha}/(1-\alpha)^\alpha.
    \end{aligned}
   \end{equation}

   Next, let $a=(a_1,a_2),b=(b_1,b_2)\in \R^2,~Q(a,b):=(|a_1^2-b_1^2|^{\frac1\gamma}-a_2^2+b_2^2)_+^{\frac{\alpha}{1-\alpha}}$, and $T(a,b):=\frac{2}{\gamma}\mathrm{sgn}(a_1^2-b_1^2)|a_1^2-b_1^2|^{\frac1\gamma-1}$, where
   \[
   \mathrm{sgn}(y)=\begin{cases} \frac{y}{|y|} & ~\text{if }y\neq 0, \\ ~0 & \text{~if }y=0. \end{cases}
    \]
    Then we know $F$ in \eqref{relaxprob} and $\nabla F$ can be written as
  \begin{align*}
      F(a,b)&=h(a^2-b^2)+\|a\|^2+\|b\|^2\\
      &=(1-\alpha)(|a_1^2-b_1^2|^{\frac1\gamma}-a_2^2+b_2^2)_+^{\frac{1}{1-\alpha}}+2b_1^2+2b_2^2, \\
      \nabla F(a,b)&=\begin{bmatrix}
         a_1T(a,b)Q(a,b) \\ -2a_2Q(a,b)\\ 4b_1-b_1T(a,b)Q(a,b)\\ 4b_2+2b_2Q(a,b)
      \end{bmatrix}.
  \end{align*}
 Let us evaluate the gradient $\nabla F((t,0),0)$ and function value $F((t,0),0)$ with $t>0$:
  \begin{align*}
      \nabla F((t,0),0)=\begin{bmatrix}
          \frac{2}{\gamma}t^{\frac{2}{\gamma(1-\alpha)}-1}   & 0 &
          0 & 0
      \end{bmatrix}^\top ,\quad F((t,0),0)=(1-\alpha)t^{\frac{2}{\gamma(1-\alpha)}}.
  \end{align*}
  Clearly, we have $\|\nabla F((t,0),0)\|=\frac{2}{\gamma}(F((t,0),0)-F(0,0))^{\frac{2-\gamma(1-\alpha)}{2}} /(1-\alpha)^{\frac{2-\gamma(1-\alpha)}{2}}$, which shows that the KL exponent of $F$ at $0$ is no less than $\frac{1+\beta}{2}$ with $\beta=1-\gamma(1-\alpha)$.

  We next verify the error bound condition \cref{nJ3eb}. Notice that $\Omega:=\argmin f=\{x\in \R^2_+:~x_2\geq x_1^{\frac1\gamma}\}=\argmin\tilde f$. Taking arbitrary $x\in K$ with $x_1\leq 1$ and defining $y:=P_{\Omega}(x)$, we have
  \begin{align*}
      &\tilde f(x)-\tilde f(0)=\tilde f(x)-\tilde f(y)\overset{\rm (a)}{\leq}\langle \nabla \tilde h(x),x-y\rangle\leq \|\nabla \tilde h(x)\|\dist(x,\Omega) \\
      &\overset{\rm (b)}{\leq}\sqrt{1/\gamma^2+1}(\tilde f(x)-\tilde f(0))^{\alpha}\dist(x,\Omega)/(1-\alpha)^\alpha,
  \end{align*}
  where in (a) we have used the convexity of $\tilde f$ and $\nabla \tilde h(x)\in\partial \tilde f(x)$, and in (b) we have used the second inequality in \cref{bound_tildef} and the fact $\tilde f(x)-\tilde f(0)=\tilde h(x)-\tilde h(0)$ due to $x\in K$. Rearranging terms in the above inequality, we see that for all $x\in K$ with $x_1\leq 1$ it holds that:
  \begin{align}
    \label{bound_dist}
       \sigma\dist(x,\Omega)\geq  (x_1^{\frac1\gamma}-x_2)_+,~ \text{where } \sigma=\sqrt{1/\gamma^2+1}/(1-\alpha).
  \end{align}
   Now, for an arbitrary $\rho\in \R^2_+$, we define the set $\Omega_{\rho}:=\Omega\cap S_\rho$, where $S_\rho:=\{x\in \R^2:~ |x_i|\leq \rho_i~~\forall i=1,2\}$. For any $x\in K$ with $\max\{x_1,x_2\}\leq 1/2$, define $z:=P_{S_\rho}(x)$. Then it can be verified that $z\in \R_+^2$ and $\max\{z_1,z_2\}\leq \max\{x_1,x_2\}\leq 1/2$ since $x\in \R_+^2$. We further define $p\in \Omega_\rho$ in the following manner and calculate $z-p$:
   \begin{equation}\label{defp}
   p=(\min\{z_2^\gamma,z_1\},z_2),    \quad z-p=   \\
     (\max\{z_1-z_2^\gamma,0\},0).
     \end{equation}
  Then we have:
\begin{align}
  \label{boundzp}
  \|z-p\|=\max\{z_1-z_2^\gamma,0\}\overset{\rm(a)}{\leq} (z_1^\frac1\gamma-z_2)_+^\gamma\overset{\rm (b)}{\leq}\sigma^\gamma \dist^\gamma(z,\Omega),
\end{align}
  where (a) clearly holds if $z_1\le z_2^\gamma$, while when $z_1>z_2^\gamma$, we deduce (a) from $z_1-z_2^\gamma=(z_1^\frac1\gamma)^\gamma-z_2^\gamma\leq (z_1^\frac1\gamma-z_2)^\gamma$,\footnote{\label{footnote4}\revise{The second inequality follows from the fact that $(c+d)^\gamma \le c^\gamma + d^\gamma$ for all $c$, $d\ge 0$; this last inequality clearly holds if $c=0$ or $d=0$, and when $c>0$ and $d>0$ it holds because $1 = \frac{c}{c+d} + \frac{d}{c+d}\le (\frac{c}{c+d})^\gamma + (\frac{d}{c+d})^\gamma$, thanks to $\gamma \in (0,\frac12]$.}} and (b) follows from \eqref{bound_dist} thanks to $z\in K$ and $z_1\leq 1$.
Therefore, we have any $x\in K$ with $\max\{x_1,x_2\}\leq 1/2$ and $z:=P_{S_\rho}(x)$ that
  \begin{align*}
     &\dist(x,\Omega_\rho)\leq \|x-z\|+\dist(z,\Omega_\rho)\overset{\rm (a)}{\leq} \dist(x,S_\rho)+\|z-p\| \\
     &\overset{\rm (b)}{\leq} \dist(x,S_\rho)+ \sigma^\gamma\dist^\gamma(z,\Omega)\leq \dist(x,S_\rho)+ \sigma^\gamma(\dist(x,\Omega)+\|x-z\|)^\gamma \\
     &\overset{\rm (c)}{\leq} \dist(x,S_\rho)+\sigma^\gamma(\dist^\gamma(x,\Omega)+\dist^\gamma(x,S_\rho))\overset{\rm (d)}{\leq} c \max\{\dist(x,S_\rho),\dist(x,\Omega)\}^\gamma,
  \end{align*}
  where (a) holds in view of $z=P_{S_\rho}(x)$ and the $p\in \Omega_\rho$ constructed in \eqref{defp}, (b) holds because of \eqref{boundzp}, in (c) we have used $z=P_{S_\rho}(x)$ and the inequality $(c+d)^\gamma \le c^\gamma + d^\gamma$ for any $c$, $d\ge 0$ (see footnote~\ref{footnote4}), and in (d) we have used $\max\{x_1,x_2\}\leq 1/2$ and $0\in S_\rho$ to show that $\dist(x,S_\rho)\leq \|x\|<1$ and set $c=1+2\sigma^\gamma$. Consequently, we know the condition \eqref{nJ3eb} holds, and by \cref{thm3-11} we know the KL exponent of $F$ is $\frac{1+\beta}{2}$ with $\beta=1-\gamma(1-\alpha)$, which is tight by the previous argument.
\end{exmp}
}

\begin{rem}[Explicit KL exponent]
   When $h$ is the least squares loss function or the logistic loss function as described in \cref{rem3-12}, the KL exponent of the corresponding $f$ in \eqref{l1prob} is $\frac12$ (see \cite[Corollary~5.1]{li2018calculus}). In these cases, by \cref{thm2-1}, \cref{thm3-11} and \cref{rem3-12}, we can deduce that the KL exponent of the corresponding $F$ in \eqref{relaxprob1} at a second-order stationary point $(a^*,b^*)$ is either $1/2$ or $3/4$, depending on whether $-\nabla h(s^*)\in \ri(\mu\partial\|s^*\|_1)$, where $s^* = (a^*)^2 - (b^*)^2$.
\end{rem}

\section{Convergence analysis of a first-order method}\label{sec4}

\begin{algorithm}
    \begin{algorithmic}
\caption{ Gradient descent with backtracking linesearch}\label{algo1}
\Require{Initial $(a^0,b^0)\in\R^n\times \R^n$, initial stepsize $\theta_0\in (0,\infty)$, $\kappa\in (0,1)$.} \\
$\theta=\theta_0$\;
\For{ $k=0,1,\dots$ }
\State $(a^{k+1},b^{k+1})=(a^k,b^k)-\theta\nabla F(a^k,b^k)$\;
\While{$F(a^{k+1},b^{k+1})>F(a^k,b^k)-\frac{\theta^2}{2}\|\nabla F(a^k,b^k)\|^2$}
\State $\theta\gets \kappa\theta$\;
\State $(a^{k+1},b^{k+1})=(a^k,b^k)-\theta\nabla F(a^k,b^k)$\;
\EndWhile
%\State $k\leftarrow k+1$\;
\EndFor
\end{algorithmic}
\end{algorithm}

\revise{To highlight the consequences of our results in \cref{sec3}}, we analyze the convergence rate of a standard first-order method applied to minimizing $F$ in \eqref{relaxprob1}, where we incorporate a standard linesearch scheme to adaptively estimate the ``local" Lipschitz constant of $\nabla F$; see \cref{algo1}. We will argue that, under some reasonable assumptions, \revise{with} almost every initial point \revise{and initial stepsize}, the sequence generated by \cref{algo1} converges to a second-order stationary point and the convergence rate can then be deduced based on the KL exponent of $f$ in \eqref{l1prob}, according to \cref{thm2-1} and \cref{thm3-11}. The reduction to the KL exponents of $f$ is important because the explicit KL exponents of functions of the form $f$ have been identified for a large variety of convex $h$ that arise in contemporary applications (see, e.g., \cite{zhou2017unified,bolte2017error,drusvyatskiy2018error,li2018calculus}), while not much is known concerning the explicit \revise{KL exponents of the corresponding $F$.}

We first present our assumption. (\rm{A1}) is inspired by \cite[Assumption 3]{cheridito2022gradient}.
\begin{assumption}
\label{assum3-1}
Let $F$ be defined in \eqref{relaxprob1} and consider \cref{algo1}. Let $\Gamma=\{\kappa^k\theta_0:~k\in \mathbb N\}$, and define $\cG_\theta(a,b):=(a,b)-\theta\nabla F(a,b)$. We assume that
\begin{enumerate}[\rm ({A}1)]
    \item For each $\theta\in \Gamma$, there exists an open set $U_\theta\subseteq \R^{2n}$, whose complement has zero Lebesgue measure, and $\det(D\cG_\theta)$ is nonzero on $U_\theta$.\footnote{Here, $D\cG_\theta$ is the Jacobian of $\cG_\theta$.}
    \item The function $F$ is level bounded.
    \item The function $F$ satisfies the KL property at each of its stationary point.
\end{enumerate}
\end{assumption}

Under the above assumption, we will show that \cref{algo1} can converge to a second-order stationary point of $F$ in some almost sure sense. Let us first justify these assumptions. We note that (A2) holds if $h$ is lower bounded, in which case $F$ is the sum of a level bounded function and a lower bounded function, and hence is level bounded. (A3) is a standard assumption in the convergence analysis of first-order method; see, e.g., \cite{attouch2010proximal,AttBolSva13,BolSabTeb14}. For (A1), we will show it holds if $F$ is \revise{subanalytic}: this covers the cases when $h$ is the least squares loss function or the logistic loss function as described in \cref{rem3-12}, \revise{and most other practical loss functions. The definition of subanalytic sets and functions are taken from \cite[p. 40]{shiota1997geometry}, and is equivalent to the projection definition of semianalytic sets due to \cite{hironaka1973subanalytic}; see, e.g., \cite[Proposition 3.13]{bierstone1988semianalytic}.
\begin{definition}
  \label{defn_subanalytic}
  A set $X\subseteq \R^n$ is said to be subanalytic, if for each $x\in \R^n$, there exists a neighborhood $U$ of $x$ such that the set $U\cap X$ can be written as finite union of sets in the form $\mathrm{Im}(f_1)-\mathrm{Im}(f_2)$,\footnote{\revise{Here, ${\rm Im}(f)$ is the image of the function $f$.}} where $f_1$ and $f_2$ are topologically proper (i.e. the preimage of compact set is compact), and real analytic from real analytic manifolds to $\R^n$. A mapping $\psi:\R^n\to \R^m$ is said to be subanalytic if $\mathrm{gph}(\psi)$ is subanalytic.
\end{definition}
}
The \revise{proof of the next proposition} is inspired by \cite[Lemma 11]{cheridito2022gradient}.

% \begin{prop}
%      Suppose that $F$ in \eqref{relaxprob1} is a polynomial function. Then, there exists a full measure set $\Theta\subseteq \R_{++}$ on $\R_{++}$, such that if $\theta_0\in \Theta$, then $(\rm{A1})$ holds.
% \end{prop}
% \begin{proof}
%     By mimicking the proof in \cite[Lemma 11]{cheridito2022gradient}, we can show that there exists full measure set $S\subseteq \R_{++}$ on $\R_{++}$, such that if $\theta\in S$, then $\det(D\cG_\theta)\neq 0$ on a full measure set $U_\theta$. Notice that $H:=\cap_{k\in \mathbb N} S/\kappa^k$ has full measure on $\R_{++}$, since $\cup_{k\in \mathbb N}S^c/\kappa^k$ has zero measure as countable union of measure zero sets. As argued above, (A1) holds if $\theta_0\in H$.
% \end{proof}
\begin{prop}
  \label{prop4-3}
     Suppose that $F$ in \eqref{relaxprob1} is a \revise{subanalytic} function. Then, there exists a full measure set $\Theta\subseteq \R_{++}$ on $\R_{++}$, such that if $\theta_0\in \Theta$, then $(\rm{A1})$ holds.
\end{prop}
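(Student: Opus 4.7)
The plan is to reduce (A1) to a statement about the zero set of a polynomial in $\theta$. Computing from the definition of $\cG_\theta$ gives $D\cG_\theta(a,b) = I_{2n} - \theta\,\nabla^2 F(a,b)$, so if we set $\phi(\theta,a,b):=\det(I_{2n}-\theta\nabla^2 F(a,b))$, then $U_\theta = \{(a,b):\phi(\theta,a,b)\neq 0\}$. Since $F\in C^2$ (because $h\in C^2(\R^n)$), the function $\phi$ is continuous, so $U_\theta$ is automatically open and the content of (A1) is that its complement $Z_\theta:=\{(a,b):\phi(\theta,a,b)=0\}$ has $2n$-dimensional Lebesgue measure zero. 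The task therefore reduces to finding a full-measure set $\Theta\subseteq \R_{++}$ so that for every $\theta_0\in \Theta$ and every $k\in\mathbb N$, the slice $Z_{\kappa^k\theta_0}$ is a null set.

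The key observation is that for each fixed $(a,b)$, the map $\theta\mapsto \phi(\theta,a,b)$ is a polynomial of degree at most $2n$ with constant term $\phi(0,a,b)=\det(I_{2n})=1$, so it has at most $2n$ real roots. Consequently, if we define $Z:=\{(\theta,a,b)\in \R_{++}\times \R^{2n}:\phi(\theta,a,b)=0\}$, then every vertical slice $Z^{(a,b)}:=\{\theta:\phi(\theta,a,b)=0\}$ is finite and hence has one-dimensional measure zero. By Fubini, $Z$ has $(2n+1)$-dimensional Lebesgue measure zero, and applying Fubini in the reverse direction produces a full-measure set $\Theta^{\ast}\subseteq \R_{++}$ with the property that $Z_\theta$ has $2n$-dimensional measure zero for every $\theta\in\Theta^{\ast}$.

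To accommodate the entire geometric sequence $\Gamma=\{\kappa^k\theta_0:k\in\mathbb N\}$ at once, I would then set $\Theta:=\bigcap_{k\ge 0}\kappa^{-k}\Theta^{\ast}$. Since each dilation $\theta\mapsto \kappa^{-k}\theta$ is a diffeomorphism of $\R_{++}$ that preserves null sets, each $\kappa^{-k}\Theta^{\ast}$ has full measure in $\R_{++}$, and $\Theta$, being a countable intersection of full-measure sets, is itself full measure. For any $\theta_0\in\Theta$ and any $k\ge 0$, the step size $\theta=\kappa^k\theta_0$ lies in $\Theta^{\ast}$, so $Z_\theta$ has measure zero and its open complement $U_\theta$ witnesses (A1).

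The main subtlety I anticipate concerns clarifying the role of the subanalyticity hypothesis. The Fubini argument above requires only the $C^2$ regularity of $F$ together with the fact that $\phi(\cdot,a,b)$ is a nontrivial polynomial for every $(a,b)$; subanalyticity is not strictly needed for the measure-theoretic conclusion itself. However, subanalyticity does provide additional structural information, since for each $\theta$ the set $Z_\theta$ is then subanalytic, so once it is known to be a proper subset of $\R^{2n}$ it is automatically nowhere dense and of Lebesgue measure zero. I would include this sharper structural observation as a complementary remark, so that the role of the standing subanalyticity hypothesis of the proposition is transparent, rather than as a distinct step in the proof.
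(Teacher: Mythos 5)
Your argument is correct, and it reaches the conclusion by a genuinely more elementary route than the paper's. Both proofs start from the same reduction ($D\cG_\theta=I_{2n}-\theta\nabla^2F$, $U_\theta$ open by continuity of $\nabla^2 F$, so (A1) amounts to $Z_\theta$ being null) and both end with the same Fubini-type slicing step and the same countable intersection of dilations $\bigcap_k \kappa^{-k}\Theta^{*}$. The difference is in how the set $Z=\{\phi=0\}$ is shown to be Lebesgue-null in $\R^{2n+1}$: the paper invokes subanalyticity of $F$, deduces that $Z$ is a closed subanalytic set, applies a Whitney stratification lemma to write $Z$ as a locally finite countable union of submanifolds, and uses the finitely-many-roots observation only to rule out a top-dimensional stratum. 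You instead use the finitely-many-roots observation directly: $Z$ is closed (hence Borel measurable), every vertical slice $Z^{(a,b)}$ is finite because $\theta\mapsto\phi(\theta,a,b)$ is a polynomial of degree at most $2n$ with constant term $1$, and Tonelli applied to $\mathbf 1_Z$ gives $\lambda_{2n+1}(Z)=0$. This is airtight and shows, as you correctly observe, that the subanalyticity hypothesis is not needed for this proposition at all --- only $F\in C^2$ is used. What the paper's heavier machinery buys is nothing extra here; the hypothesis is presumably retained because subanalyticity is needed elsewhere (e.g.\ for the KL property in (A3)).

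One small correction to your closing remark: it is not true that a \emph{proper} subanalytic subset of $\R^{2n}$ is automatically nowhere dense and null --- a closed half-space is a proper subanalytic set with nonempty interior and infinite measure. The correct statement, and the one the paper actually uses (for $Z$ rather than $Z_\theta$), is that a subanalytic set with \emph{empty interior} has measure zero, via stratification into lower-dimensional submanifolds. Since this remark is advertised as complementary and plays no role in your proof, the proof itself stands.
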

\begin{proof}
    Define $\cH:\R^n\times \R^n\times \R\to \R$ as $\cH(a,b,\theta):=\det(I_{2n}-\theta \nabla^2F(a,b))=\det(D\cG_\theta(a,b))$. \revise{Now, we identify $\R^n\times \R^n\times \R$ as $\R^{2n+1}$ with the usual Lebesgue measure and Euclidean topology.} Since $F$ is \revise{subanalytic and $F\in C^2(\R^n)$, we know each component of $\nabla^2F$ is subanalytic by \cite[(I.2.1.13)]{shiota1997geometry}, and hence $\cH$ is subanalytic by \cite[(I.2.1.9)]{shiota1997geometry} and the product form of determinant. Therefore, the set $Z:=\cH^{-1}\{0\}$ is closed by the continuity of $\cH$ and also subanalytic by \cite[(I.2.1.4)]{shiota1997geometry}. Applying \cite[Lemma I.2.2]{shiota1997geometry}, we see that $Z$ can be represented as disjoint union of $C^2$ submanifolds\footnote{\revise{We apply this lemma with $A_\nu\equiv \R^{2n+1}$ to obtain a Whitney stratification $\{X_i\}_{i\in J}$ of $Z$; see \cite[Page~4]{shiota1997geometry} for the definition of Whitney stratification.}} $\{X_i\}_{i\in J}$ of $\R^{2n+1}$, where $J$ is an index set, and $\{X_i\}_{i\in J}$ is locally finite at each $x\in Z$ in the sense that there is a neighborhood $U$ of $x$ such that only finitely many sets in $\{U\cap X_i\}_{i\in J}$ are nonempty. Since $\R^{2n+1}$ is second countable, we know $J$ is at most countable. Next, we note that for each $(a,b)\in \R^n\times \R^n$, only finitely many $\theta\in \R$ satisfy that $\cH(a,b,\theta)=0$. This means that $Z$ does not contain any open set in $\R^{2n+1}$, and hence for all $i\in J$ it holds that $\dim(X_i)<2n+1$; see \cite[Theorem 1.21]{lafontaine2015introduction}. According to \cite[Proposition 1.38]{lafontaine2015introduction}, we know each $X_i$ with $i\in J$ has measure zero in $\R^{2n+1}$, and hence $Z=\cup_{i\in J} X_i$ has measure zero in $\R^{2n+1}$.} By \cite[Chapter 2, Corollary 3.3]{stein2009real}, we know that there exists a full measure set $Q\subseteq \R$, such that for each $\theta\in Q$, the set $Z_\theta:=\{(a,b)\in\R^n\times\R^n:\,(a,b,\theta)\in Z\}$ has zero measure and is closed as a slice of the closed set $Z$. Therefore, it suffices to take $\Theta=\left(\cap_{\revise{k=0}}^\infty Q/\kappa^k\right)\cap \R_{++}$.
\end{proof}
\begin{thm}
  \label{thm4-4}
    Suppose that \cref{assum3-1} holds. Then there exists a subset $V\subseteq \R^{2n},$ whose complement has zero Lebesgue measure, such that if $(a^0,b^0)\in V$, then the sequence $\{(a^k,b^k)\}$ generated by \cref{algo1} converges to a second-order stationary point $(a^*,b^*)$ of $F$ in \eqref{relaxprob1}. Moreover, in this case, denote $s^*=(a^*)^2-(b^*)^2$ and assume in addition that $f$ in \eqref{l1prob} satisfies the KL property at $s^*$ with exponent $\alpha\in (0,1)$. Then the following statements hold.
    \begin{enumerate}[\rm (i)]
        \item If $-\nabla h(x^*)\in  \ri(\mu\partial\|s^*\|_1)$, then $\|(a^k,b^k)-(a^*,b^*)\|=O(k^{-\frac{1-\alpha}{2\alpha-1}})$ if $\alpha\in (\frac{1}{2},1)$, and $\|(a^k,b^k)-(a^*,b^*)\|=O(c^k)$ for some $c\in (0,1)$ if $\alpha\in (0,\frac12]$.
        \item If $h$ is convex and the conditions in \cref{thm3-11} are all satisfied, then it holds that $\|(a^k,b^k)-(a^*,b^*)\|=O(k^{-\frac{1-\beta}{2\beta}})$, where $\beta$ is defined in \cref{thm3-11}.
    \end{enumerate}
\end{thm}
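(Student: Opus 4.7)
The plan is to combine the KL-based convergence theory with a strict-saddle-avoidance argument tailored to the backtracking linesearch. First, I would use (A2) and $F \in C^2(\R^{2n})$ to conclude that $\{(a^k,b^k)\}$ stays in a compact sublevel set on which $\nabla F$ is Lipschitz with some modulus $L>0$. The descent lemma then ensures the inner while-loop terminates once $\theta \le 1/L$, so the stepsize $\theta_k$ is non-increasing, stabilizes at some $\theta_* \in \Gamma$ with $\theta_* \ge \kappa/L$ from some iteration $k_0$ onwards, and yields the sufficient decrease
\[
F(a^k,b^k) - F(a^{k+1},b^{k+1}) \ge \tfrac{\theta_*^2}{2}\|\nabla F(a^k,b^k)\|^2 \quad\forall k \ge k_0,
\]
together with the step-to-gradient identity $(a^{k+1},b^{k+1})-(a^k,b^k) = -\theta_k \nabla F(a^k,b^k)$. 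Combined with (A3), the abstract KL framework of \cite{AttBolSva13,attouch2010proximal,BolSabTeb14} implies that $\{(a^k,b^k)\}$ has finite length and converges to some stationary point $(a^*,b^*)$ of $F$.

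Next, to rule out convergence to strict saddles (stationary points with $\lambda_{\min}(\nabla^2 F)<0$), I would invoke a strict-saddle-avoidance argument. At a strict saddle, the Jacobian $D\cG_\theta = I_{2n} - \theta \nabla^2 F$ has an eigenvalue larger than $1$ in absolute value, and by (A1), $\cG_\theta$ is a local $C^1$-diffeomorphism on the full-measure open set $U_\theta$. The center-stable manifold theorem, applied as in \cite{lee2019first,poon2023smooth}, then ensures that for each fixed $\theta \in \Gamma$, the set of initial points whose $\cG_\theta$-orbit converges to a strict saddle lies on a submanifold of dimension less than $2n$ and thus has Lebesgue measure zero. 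Since $\Gamma$ is countable and the stepsize trajectory from \cref{algo1} is non-increasing, takes values in $\Gamma$, and is eventually constant (at some $\theta_* \in \Gamma$) after finitely many steps, one can union the resulting zero-measure exceptional sets over all eventually-constant stepsize trajectories, pulled back by the diffeomorphisms of the finitely many preceding $\cG_\theta$'s, to obtain a full-measure set $V \subseteq \R^{2n}$. For every $(a^0,b^0) \in V$, the limit $(a^*,b^*)$ must satisfy $\lambda_{\min}(\nabla^2 F(a^*,b^*)) \ge 0$ and is hence a second-order stationary point.

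For the rates, \cref{prop2ndorder} gives that $s^* = (a^*)^2 - (b^*)^2$ is a stationary point of $f$. The KL assumption on $f$ at $s^*$ then transfers to $F$ at $(a^*,b^*)$: under the strict complementarity condition, \cref{thm2-1} yields KL exponent $\alpha' = \max\{\alpha, \tfrac12\}$, while in the setting of \cref{thm3-11} we get $\alpha' = \tfrac{1+\beta}{2}$. Plugging $\alpha'$ into the classical convergence-rate result for KL descent sequences \cite{attouch2009convergence} yields linear convergence when $\alpha' \le \tfrac12$ and the polynomial rate $O(k^{-(1-\alpha')/(2\alpha'-1)})$ when $\alpha' \in (\tfrac12,1)$. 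Direct substitution then produces the claimed rates in (i) and (ii).

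The main obstacle is the strict-saddle-avoidance step: the classical stable-manifold result applies to iteration of a single fixed map, whereas the backtracking linesearch produces iterations of a time-varying map whose stepsize at each step depends on the current iterate. The resolution exploits the monotone non-increasing behavior of $\theta_k$, the countability of $\Gamma$, and the uniform lower bound $\theta_* \ge \kappa/L$, which together allow the time-varying trajectory to be decomposed into countably many single-map trajectories, each handled by the stable-manifold theorem.
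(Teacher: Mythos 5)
Your proposal is correct and follows essentially the same route as the paper: boundedness and eventual stepsize stabilization, the abstract KL framework for convergence to a stationary point, a center-stable-manifold/measure-zero argument for each fixed $\cG_\theta$ (the paper cites \cite{cheridito2022gradient} where you cite \cite{lee2019first,poon2023smooth}), a union over the countably many finite stepsize prefixes pulled back through the maps $\cG_\theta$ using (A1), and finally the exponent transfer via \cref{thm2-1} and \cref{thm3-11} plugged into \cite{attouch2009convergence}. The only cosmetic imprecision is calling the global stable set of a strict saddle ``a submanifold'' rather than a countable union of sets covered by local center-stable manifolds, but this does not affect the measure-zero conclusion.
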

\begin{proof}
    Since $F$ is level bounded and $\{F(a^k,b^k)\}$ is nonincreasing, we see that $\{(a^k,b^k)\}$ lies in the bounded set $\{(a,b)\in\R^{n}\times \R^n:~F(a,b)\leq F(a^0,b^0)\}$. Therefore, the stepsize $\theta$ in \cref{algo1} would remain constant eventually; see the proof of \cite[Proposition~A.1(ii)]{chen2016penalty}. The result about the global convergence to a stationary point
    now follows from (A3) of \cref{assum3-1} and standard arguments as in \cite{AttBolSva13}. In addition, the convergence rate result will follow (from, e.g., \cite{attouch2009convergence}) once $(a^*,b^*)$ is shown to be a second-order stationary point so that the KL exponent of $F$ at $(a^*,b^*)$ can be inferred from that of $f$ at $s^*$ according to \cref{subs:uncontrainKLregular} and \cref{subs:uncontrainklgeneral}.
    In other words, now it suffices to argue for the existence of the set $V$.

    To this end, let $A$ be defined as the set of all the strict saddle points of $F$. For each $\theta\in \Gamma$ defined in \cref{assum3-1}, we define the set $B_{\theta}:=\{(a,b)\in \R^{2n}:~\lim_{k\to\infty}\cG_\theta^k((a,b))\in A\}$. Apply \cite[Proposition 7]{cheridito2022gradient},\footnote{\revise{Specifically, we apply \cite[Proposition 7]{cheridito2022gradient} to the mapping $\cG_\theta$. Notice that \cite[Proposition 7]{cheridito2022gradient} requires the function $f:\R^d\to\R^d$ there to have locally Lipschitz Jacobian, but this condition was only used to show the next statement: ``for any fixed $z\in \R^n$, given any $\epsilon>0$, there exists $r_\epsilon>0$ such that $R(x):= f(x)-z-Df(z)(x-z)$ is $\epsilon$-Lipschitz on $\bB(z,r_\epsilon)$". We point out here that merely requiring $f$ there to be continuously differentiable would suffice. Indeed, we can choose $r_\epsilon>0$ such that $\sup_{y^1,y^2\in \bB(z,r_\epsilon)}\|Df(y^1)-Df(y^2)\|_2\leq\epsilon$, where $\|\cdot\|_2$ denotes the spectral norm. Then, taking arbitrary $x^1,x^2\in \bB(z,r_\epsilon)$, by calculus we have $\|R(x^1)-R(x^2)\|=\|f(x^1)-f(x^2)-Df(z)(x^1-x^2)\|=\|\int_0^1 (Df(x^2+t(x^1-x^2))-Df(z))(x^1-x^2)dt\|\leq \int_0^1 \|Df(x^2+t(x^1-x^2))-Df(z)\|_2\|x^1-x^2\|dt\leq \epsilon \|x^1-x^2\|$, which proves that $R$ is $\epsilon$-Lipschitz continuous on $\bB(z,r_\epsilon)$. Since our $\cG_\theta$ is continuously differentiable because $h\in C^2(\R^n)$, \cite[Proposition 7]{cheridito2022gradient} is applicable.}} we know $B_\theta$ has measure zero for each $\theta\in \Gamma$. Therefore, the set $B:=\cup_{\theta\in\Gamma}B_\theta$ also has measure zero. Let $Q$ be the set consisting of all finite sequences of elements in $\Gamma$. According to the argument in \cite[Theorem~2.1-1]{ciarlet2013linear}, we know $Q$ is also countable. Since the stepsize $\theta$ in \cref{algo1} would only change finitely many times, we have:\vspace{-0.1 cm}
    \[
    \vspace{-0.1 cm}
    \textstyle
    \{(a^0,b^0):\lim_{k\to\infty}(a^k,b^k)\in A\}\subseteq \bigcup_{(\theta_1,\dots,\theta_r)\in Q}\cG_{\theta_1}^{-1}\circ\dots\circ \cG_{\theta_r}^{-1}(B),
    \]
    where the right hand side has measure zero as countable union of measure zero sets: sets of the form $\cG_{\theta_1}^{-1}\circ\dots\circ \cG_{\theta_r}^{-1}(B)$ have zero Lebesgue measures thanks to (A1) and \cite[Lemma 6]{cheridito2022gradient}. Therefore, it suffices to take $V=[\bigcup_{(\theta_1,\dots,\theta_r)\in Q}\cG_{\theta_1}^{-1}\circ\dots\circ \cG_{\theta_r}^{-1}(B)]^c$.
\end{proof}
\revise{
\begin{corollary}
  Assume that $h$ in \eqref{l1prob} is subanalytic and lower bounded, and the initial stepsize $\theta_0$ and the initial point $(a^0,b^0)$ in \cref{algo1} are chosen independently according to distributions whose cumulative distribution functions (regarded as probability measures) are absolute continuous with respect to the Lebesgue measures on $\R_{++}$ and $\R^n\times \R^n$, respectively. Then, with probability 1, the sequence $\{(a^k,b^k)\}$ generated by \cref{algo1} converges to a second-order stationary point $(a^*,b^*)$ of $F$ in \eqref{relaxprob1}. Moreover, in this case, denote $s^*=(a^*)^2-(b^*)^2$ and assume in addition that $f$ in \eqref{l1prob} satisfies the KL property at $s^*$ with exponent $\alpha\in (0,1)$. Then the convergence rate of $\{(a^k,b^k)\}$ follows the rate given in \cref{thm4-4}.
\end{corollary}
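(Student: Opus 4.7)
The plan is to verify that all three items of \cref{assum3-1} hold almost surely under the stated distributional assumptions, and then apply \cref{thm4-4} together with a Fubini/tower-property argument. I would write $\mathbb{P}$ for the joint law of $(\theta_0,(a^0,b^0))$ on $\R_{++}\times\R^{2n}$.

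First, I would dispose of (A2) and (A3) deterministically. Lower boundedness of $h$ gives $F(a,b)\geq \inf h + \mu(\|a\|^2+\|b\|^2)$, so $F$ is level bounded, which is (A2). For (A3), I would show that $F$ is subanalytic: the map $(a,b)\mapsto a^2-b^2$ is polynomial, hence analytic, so $(a,b)\mapsto h(a^2-b^2)$ is subanalytic by closure of the subanalytic class under composition with analytic maps; adding the polynomial $\mu(\|a\|^2+\|b\|^2)$ preserves subanalyticity. A continuous subanalytic function satisfies the KL property at every point by \cite{bolte2007clarke}, in particular at every stationary point, giving (A3).

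Second, to secure (A1), I would invoke \cref{prop4-3}, which, since $F$ is subanalytic, supplies a full-measure subset $\Theta\subseteq\R_{++}$ on which (A1) is valid whenever $\theta_0\in\Theta$. Absolute continuity of the law of $\theta_0$ with respect to Lebesgue measure on $\R_{++}$ then implies $\mathbb{P}(\theta_0\in\Theta)=1$.

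Third, for every $\theta_0\in\Theta$, \cref{thm4-4} applies and produces a set $V(\theta_0)\subseteq\R^{2n}$ of full Lebesgue measure such that, whenever $(a^0,b^0)\in V(\theta_0)$, the sequence converges to a second-order stationary point and the rate assertions hold. Independence of $(a^0,b^0)$ and $\theta_0$ combined with absolute continuity of the law of $(a^0,b^0)$ yields $\mathbb{P}((a^0,b^0)\in V(\theta_0)\mid \theta_0)=1$ for each $\theta_0\in\Theta$; the tower property (or Fubini) then gives $\mathbb{P}(\theta_0\in\Theta\ \text{and}\ (a^0,b^0)\in V(\theta_0))=1$, which establishes both the almost-sure convergence and the quantitative rates inherited directly from \cref{thm4-4}. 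The main technical point to verify will be the measurability of the map $\theta_0\mapsto V(\theta_0)$ so that the tower-property computation is rigorous; this should follow from the explicit construction of $V$ in the proof of \cref{thm4-4} as the complement of a countable union of Borel sets of the form $\cG_{\theta_1}^{-1}\circ\dots\circ\cG_{\theta_r}^{-1}(B)$, with each $\theta_i$ a deterministic function of $\theta_0$ and each $\cG_\theta$ jointly continuous in $(a,b,\theta)$.
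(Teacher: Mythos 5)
Your proposal is correct and follows essentially the same route as the paper: the paper's proof likewise reduces everything to verifying that $F$ is subanalytic (via closure of subanalytic functions under composition with polynomials and under sums, citing Shiota), deduces (A3) from the KL property of subanalytic functions, secures (A1) through \cref{prop4-3}, and then concludes via \cref{thm4-4} and the absolute-continuity/independence assumptions. Your treatment is in fact slightly more careful than the paper's one-line proof, since you explicitly flag the $\theta_0$-dependence of $V$ and the measurability needed for the Fubini step.
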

\begin{proof}
   In view of \cref{prop4-3}, \cref{thm4-4} and \cite{bolte2007lojasiewicz}, it suffices to verify that $F$ is subanalytic, which follows from \cite[(I.2.1.9) and (I.2.1.10)]{shiota1997geometry}.
\end{proof}
}

%\section*{Acknowledgments}
%We would like to acknowledge the assistance of volunteers in putting
%together this example manuscript and supplement.
\appendix

\section{\revise{Proof of \cref{prop2-2}}}
\label{appendixA}
\begin{proof}
  Fix any $x,y\in \cX$. Since $g\in C^2(\R^n)$, we can assume that $\nabla g$ is Lipschitz continuous with a modulus of $L'$ on a bounded open convex neighborhood of the line segment $[x,y]$. We choose $L > L'$ to be sufficiently large so that this neighborhood contains $y-\frac{1}{L}(\nabla g(y)-\nabla g(x))$.

  Let $\phi(z)=g(z)-\langle \nabla g(x),z\rangle$. By the first-order optimality condition, we know that $x\in \argmin_{z\in\R^n}\phi(z)$. In particular, $\phi(x)\leq \phi(y-\frac{1}{L}\nabla \phi(y))$. Applying the descent lemma, we deduce that $\phi(x)\leq \phi(y)-\frac{1}{2L}\|\nabla \phi(y)\|^2$.
    Since $\phi(z)=g(z)-\langle \nabla g(x),z\rangle$, we obtain immediately that
   \begin{equation}\label{lip}
   \textstyle\frac{1}{2L}\|\nabla g(x)-\nabla g(y)\|^2+g(x)+\langle \nabla g(x),y-x \rangle\leq g(y).
   \end{equation}
   Next, since $x\in \cX$, the first-order optimality condition says that $-\nabla g(x)\in \partial \vp(x)$, to which we can apply the convexity of $\vp$ to deduce:
   \begin{equation}\label{lip1}
   \vp(x)+\langle -\nabla g(x), y-x\rangle \leq \vp(y).
   \end{equation}
   Summing \eqref{lip} and \eqref{lip1}, we see further that
   $\psi(x)+  \frac{1}{2L}\|\nabla g(x)-\nabla g(y)\|^2\leq \psi(y)$.
   Notice that $\psi(x)=\psi(y)$ due to $x,y\in \cX$. Hence, we have
   $\frac{1}{2L}\|\nabla g(x)-\nabla g(y)\|^2\leq 0$,
   which implies that $\nabla g(x)=\nabla g(y)$. Consequently, $\nabla g$ is constant on the set $\cX$, and $\cX\subseteq\nabla g^{-1}(v)\cap \partial \vp^{-1}(-v)$. The converse inclusion follows from the first-order sufficient condition for convex optimization problems and the fact that $\partial \psi=\nabla g+\partial\vp$.
 \end{proof}
\section{Proof of \cref{prop3-10}}
\label{appendixB}
\begin{proof}
  From \cite[Exercise~8.8(c)]{rockafellar2009variational}, we have $\partial \psi=\nabla g+\partial \vp$. Hence, the stationarity of $\bar x$ implies that $-\nabla g(\bar x)\in\partial \vp(\bar x)$. Without loss of generality, we assume $\bar x=0$, and replace $g,\vp$ by $g(\cdot)-\langle \nabla g(0),\cdot \rangle$ and $\vp(\cdot)+\langle \nabla g(0),\cdot \rangle$, respectively. Then we have
  \begin{equation}
  \label{tildepsi}
      -\nabla g(0) = 0\in \partial\vp(0),~\tilde \psi=g+\iota_K.
  \end{equation}
 Before proving items (i)-(iii), we first establish two auxiliary facts. Our first goal is to demonstrate the existence of a neighborhood $V$ of $(0,0)$ such that
   \[
   V\cap \gph\partial \vp=\revise{V}\cap \gph\partial \iota_K.
   \]
    According to \cite[equation (4.2.7)]{facchinei2007finite} (see also \cite[Example 5.3.17]{milzarek2016numerical}), since $\vp$ is polyhedral with $0 \in \dom \vp$, there exists a neighborhood $V_1$ of $0$ such that

    \[
    \revise{\vp(x)=p(x)+\vp(0)\quad \forall x\in V_1, \ \ {\rm where}\ \ p(x) := \vp'(0;x).}
    \]
    This equation also establishes that
    \begin{equation}
    \label{plsc}
    \text{$p$ is proper, closed and convex, and }\partial\vp(x)=\partial p(x)~~\forall x\in V_1.
    \end{equation}
    Hence, by \cite[Theorem 11.1]{rockafellar2009variational} we have $p^{**}=p$, where $p^*$ is the convex conjugate of $p$. Applying \cite[Proposition~17.17]{bauschke2017convex}, it follows that
    \begin{equation}
        \label{psupp}
            p=p^{**}=(\iota_{\partial \vp(0)})^{*}=\sigma_{\partial \vp(0)}.
    \end{equation}
    Let $C=\partial \vp(0)$, we know that $C$ is a polyhedral set, as stated in \cite[Proposition 10.21]{rockafellar2009variational}. Moreover, we have $-\nabla g(0)=0\in \partial \vp(0)=\partial p(0)=C$. By utilizing \cite[Exercise 6.47]{rockafellar2009variational}, we know that there  exists an open neighborhood $V_2$ of $0$ such that
    \[
    T_{C}(0)\cap V_2=C\cap V_2.
    \]
    Consequently, for each $y\in V_2$, we have $\partial \iota_{T_{C}(0)}(y)=\partial \iota_{C}(y)$, as the subgradient can be defined locally. This implies that
    \[
    (V_2\times \R^n)\cap \gph \partial \iota_{T_{C}(0)}=(V_2\times \R^n)\cap \gph\partial\iota_{C}.
     \]
     From \cite[Theorem 23.5]{rockafellar1970convex}, we see further that
     \[
     (\R^n \times V_2 )\cap \gph\partial \sigma_{T_{C}(0)}=(\R^n\times V_2)\cap \gph\partial\sigma_{C}.
     \]
     Notice that $\sigma_{T_C(0)}=\iota_{N_C(0)}=\iota_K$, by \eqref{plsc} and \eqref{psupp} we conclude that
     \begin{equation}
         \label{neighborsubgrad}
      V\cap \gph\partial \iota_{K}=V\cap \gph\partial \vp,
     \end{equation}
     where $V=V_1\times V_2$.

    The second goal is to show that,
    \begin{equation}
        \label{sec_goal}
        \vp(x)-\vp(0)=\iota_K(x)-\iota_K(0)\ \ \ \forall (x,v)\in V\cap \gph\partial \vp=V\cap \gph\partial \iota_K.
    \end{equation}
    Take $(x,v)\in V\cap \gph\partial \vp=V\cap \gph\partial \iota_K$, we know $x\in\dom\partial\iota_K=\dom\iota_K$. Since $\partial\iota_K = N_K$ and $V=V_1\times V_2$ with $V_2$ being a neighborhood of $0$, this implies that\vspace{-0.1 cm}
    \[\vspace{-0.1 cm}
    (x,0)\in V\cap \gph\partial \iota_K = V\cap \gph\partial \vp,
    \]
    where we have used \eqref{neighborsubgrad} for the equality. The above display implies $0 \in \partial \vp(x)$. Using this together with $0 \in \partial\vp(0)$ (see \eqref{tildepsi}) and the convexity of $\vp$, we see that $\vp(x)=\vp(0)$, which means that $\vp(x)-\vp(0)=0=\iota_K(x)-\iota_K(0)$.

    We are now ready to prove (i)--(iii). We start with (i). Since $\nabla g$ is continuous and $\nabla g(0)=0$ (see \eqref{tildepsi}), we can choose a sufficiently small $a > 0$ such that
    \begin{equation}
        \label{eq_V}
        (\bB_{a},-\nabla g(\bB_a)+\bB_a)\subseteq V.
    \end{equation}
    Take $U=\bB_a\times \bB_a$. Suppose $(x,v)\in U\cap \gph\partial\psi$, then it holds that
    \[
    (x,v-\nabla g(x))\overset{\rm(a)}{\in} \gph\partial \vp,\quad (x,v-\nabla g(x))\overset{\rm(b)}{\in} (\bB_{a},-\nabla g(\bB_a)+\bB_a) \overset{\rm(c)}{\subseteq} V,
    \]
    where (a) holds because $\partial \psi=\nabla g+\partial \vp$, (b) holds as $x$, $v\in \bB_a$ and (c) is due to \eqref{eq_V}. Consequently, we know that
    \begin{equation}\label{finallyadded}
    (x,v-\nabla g(x))\in V\cap \gph\partial\vp =V\cap \gph\partial\iota_K,
    \end{equation}
    where we have used \eqref{neighborsubgrad}. Then, we know $(x,v)\in U\cap \gph\partial\tilde\psi$ since $\partial\tilde\psi=\nabla g+\partial\iota_K$. This proves $U\cap \gph\partial\psi\subseteq U\cap\gph\partial\tilde\psi$. A similar argument shows that $U\cap \gph\partial\psi\supseteq U\cap\gph\partial\tilde\psi$, which proves that $U\cap \gph\partial\psi=U\cap\gph\partial\tilde\psi$.

    Next, notice that for all $(x,v)\in U\cap \gph\partial\psi= U\cap\gph\partial\tilde\psi$, we can deduce \eqref{finallyadded} as argued above. This together with \eqref{sec_goal} shows that
    \[
    \psi(x)-\psi(0)-\left(\tilde \psi(x)-\tilde \psi(0)\right)= \vp(x)-\vp(0)-\left(\iota_K(x)-\iota_K(0)\right)=0.
    \]
    This proves (i). Part (ii) follows from (i) and \cref{rem2-3}.

    Finally, for part (iii), assume that $0$ is a local minimizer of $\psi$, then we may take a sufficiently small neighborhood $U_1$ of $0$ such that for all $x\in U_1$, we have $\psi(x)\geq \psi(0)$, $(x,0)\in U$ and $(x,\nabla g(x))\in U$, where $U$ is given in item (i). Then, for all $x\in U_1\cap \dom \tilde\psi $, notice that $\nabla g(x)+0\in \nabla g(x)+\partial\iota_K(x)=\partial \tilde \psi(x)$, we have
    \[
    (x,\nabla g(x))\in U\cap \gph\partial\tilde\psi\overset{\rm (a)}=U\cap \gph\partial \psi,
    \mbox{ and hence }\tilde\psi(x)-\tilde\psi(0)\overset{\rm (b)}=\psi(x)-\psi(0)\ge 0,
    \]
    where we have used item (i) for (a) and (b). This means that $0$ is a local minimizer of $\tilde\psi$. Moreover, if $x\in U_1\cap \Omega$, then by the first-order optimality condition, we see that $(x,0)\in U\cap \gph\partial\psi=U\cap\gph\partial\tilde\psi$, and hence $\tilde \psi(x)-\tilde\psi(0)=\psi(x)-\psi(0)=0$ by item (i). This shows that $U_1\cap\Omega\subseteq\tilde\Omega$. A similar argument shows that $U_1\cap\tilde\Omega\subseteq U_1\cap \Omega$, and proves that $\Omega$ locally agrees with $\tilde\Omega$.
\end{proof}

\section{Proof of \cref{lemma2-5}}
\label{appendixC}
\begin{proof}
  \revise{
  There is no loss of generality if we assume $\bar x=0$. Then $g(0)=\psi(0) = \inf \psi$. In view of the KL assumption and \cref{prop3-12}, by shrinking the $U$ in \eqref{J3eb} further if necessary, we can deduce that there exists $c_1 > 0$ such that
  \begin{equation}\label{eqeb}
    g(x) - g(0)\geq c_1^{-\frac1{1-\alpha}}\dist^{\frac{1}{1-\alpha}}(x,\Omega)\ \ \ \forall x\in U\cap K.
  \end{equation}
  Shrink $U$ further if necessary such that
  \begin{equation}
      \label{condU}
     g(x)-g(0)\leq 1\ \ \forall x\in U\cap K.
  \end{equation}

  Fix any $x\in U\cap K$ with $g(x)>g(0)$. We define
  \begin{equation}\label{eqindex}
  I:=\left\{i\in J_3:~|x_i|\leq c_2(g(x) - g(0))^{\gamma(1-\alpha)}\right\},
  \end{equation}
  where $c_2=cc_1^\gamma$, with $c_1$ defined in \eqref{eqeb} and $c$ and $\gamma$ defined in \eqref{J3eb}. We define $\rho\in \R_{+}^{J_3}$ as $\rho_i=|x_i|$ for all $i\in J_3$. Then, by \cref{J3eb} and notice that $x\in S_\rho$, we see that
  \begin{equation}\label{xomegaieb}
    \begin{aligned}
    \dist(x,\Omega\cap S_{\rho}) &\leq c\max\{\dist(x,S_{\rho}),\dist(x,\Omega)\}^\gamma=c\,\dist^\gamma(x,\Omega)\\
    &\overset{\rm (a)}{\leq} cc_1^\gamma(g(x)-g(0) )^{\gamma({1-\alpha})}=c_2 (g(x)-g(0) )^{\gamma(1-\alpha)},
    \end{aligned}
\end{equation}
where (a) follows from \eqref{eqeb}. Additionally, let $\hat x=P_{\Omega\cap S_{\rho}}(x)$, and we know that $\|x-\hat x\|=\dist(x,\Omega\cap S_{\rho})$. We further define the following three index sets:
       \begin{equation}
        \label{defn_indexi3}
        I_1:=\{i\in I:~\rho_i=0\},~I_2:=I\setminus I_1,~I_3:=J_3\setminus I.
       \end{equation}
      Then for $i\in I_2\cup I_3$ we have $|x_i|>0$ due to $\rho_i=|x_i|$ for $i\in J_3$ and \cref{eqindex}, and moreover,
       \begin{equation}
        \label{boundxi}
        \begin{aligned}
          &\forall i\in I_1,~ \hat x_i=\rho_i=0,~|x_i-\hat x_i|=|x_i|\overset{\rm{(a)}}{\leq} c_2(g(x) - g(0))^{\gamma(1-\alpha)},\\
          &\forall i\in I_2,~ \frac{1}{|x_i|} |x_i-\hat x_i|^2\overset{\rm{(b)}}{\leq} \frac{4}{|x_i|} |x_i|^2=4|x_i|\overset{\rm{(c)}}{\leq} 4c_2(g(x) - g(0))^{\gamma(1-\alpha)},  \\
          &\forall i\in I_3,~\frac{1}{|x_i|} |x_i-\hat x_i|^2\overset{\rm (d)}{\leq} \frac{c_2^2(g(x)-g(0))^{2\gamma(1-\alpha)}}{|x_i|}\overset{\rm (e)}{\leq} c_2(g(x) - g(0))^{\gamma(1-\alpha)},
        \end{aligned}
       \end{equation}
       where in (a) and (c) we have used the fact that $I_1,I_2\subseteq I$ in \eqref{defn_indexi3} and the definition of $I$ in \eqref{eqindex}, (b) follows from the facts that $\hat x\in S_{\rho}$ and that for all $y\in S_{\rho},~i\in J_3$ it holds that $|x_i-y_i|\leq |x_i|+|y_i|\leq 2|x_i|$, (d) holds because of the relation $|x_i-\hat x_i|\leq \|x-\hat x\|=\dist(x,\Omega\cap S_\rho)$ and the bound in \eqref{xomegaieb}, and (e) follows from the definition of $I_3$ in \eqref{defn_indexi3} and that of $I$ in \eqref{eqindex}. Therefore, by using the convexity of $g$, we can deduce that:
%       \begin{equation*}
           \begin{align*}
            &g(x)-g(0) =\psi(x)-\psi(0) = \psi(x) - \psi(\hat x) = g(x) - g(\hat x) \\
             &\overset{\rm(a)}{\leq}  \sum_{i=1}^n|\nabla g(x)_i||x_i-\hat x_i|\overset{\rm(b)}{=}\sum_{i\in J_2\cup J_3}|\nabla g(x)_i||x_i-\hat x_i| \\
             &\overset{\rm(c)}{=} \sum_{i\in J_2}|\nabla g(x)_i||x_i-\hat x_i|+\sum_{i\in I_1} |\nabla g(x_i)||x_i|^{\frac12}|x_i|^{\frac12}\!\! +\!\!\!\!\sum_{i\in I_2\cup I_3}|\nabla g(x)_i|\sqrt{|x_i|} \frac{1}{\sqrt{|x_i|}}|x_i-\hat x_i| \\
             &\overset{\rm(d)}{\leq} \sqrt{\sum_{i\in J_2}|\nabla g(x_i)|^2+\sum_{j\in J_3}|x_i||\nabla g(x_i)|^2 }\sqrt{\sum_{i\in J_2}|x_i-\hat x_i|^2+\sum_{i\in I_1}|x_i|+\sum_{i\in I_2\cup I_3}\frac{|x_i-\hat x_i|^2}{|x_i|}}  \\
             &\overset{\rm(e)}{\leq} c_4\sqrt{\sum_{i\in J_2}|\nabla g(x_i)|^2+\sum_{j\in J_3}|x_i||\nabla g(x_i)|^2 }\cdot (g(x)-g(0))^{\frac{\gamma(1-\alpha)}{2}}
            \end{align*}
  %     \end{equation*}
  where (a) follows from the convexity of $g$, which implies that $g(x)-g(\hat x)\leq \langle \nabla g(x), x-\hat x\rangle$, (b) follows from condition (ii), which implies $x_i=\hat x_i$ for all $i\in J_1$,\footnote{\revise{Recall that we assumed $\bar x = 0$. Then both $x$ and $\hat x \in 0 + K$, which means $x_i = \hat x_i$ for $i \in J_1$ according to condition (ii).}} (c) follows from the definition of the index sets $I_1,I_2$, and $I_3$ in \eqref{defn_indexi3}, (d) follows from the Cauchy-Schwarz inequality,  (e) follows from the facts that $\sum_{i\in J_2}|x_i-\hat x_i|^2\leq \|x-\hat x\|^2\leq c_2^2(g(x)-g(0))^{2\gamma(1-\alpha)}\leq c_2^2(g(x)-g(0))^{\gamma(1-\alpha)}$ (which follows from \eqref{xomegaieb} and \eqref{condU}), \eqref{boundxi}, and we set $c_4=2\sqrt{n}\max\{c_2,\sqrt{c_2}\}$. Rearrange this inequality and notice that $\beta=1-\gamma(1-\alpha)$, we obtain
  \begin{align*}
    \frac{1}{c_4^2}(g(x)-g(0))^{1+\beta} \leq   \sum_{i\in J_2}|\nabla g(x_i)|^2+\sum_{j\in J_3}|x_i||\nabla g(x_i)|^2 .
  \end{align*}
}
\end{proof}
\bibliographystyle{siamplain}
\bibliography{Commonbib}
\end{document}